\newtheorem{theorem}{Theorem}[section]
\newtheorem{example}[theorem]{Example}
\newtheorem{definition}[theorem]{Definition}
\newtheorem{proposition}[theorem]{Proposition}
\newtheorem{lemma}[theorem]{Lemma}
\newtheorem{corollary}[theorem]{Corollary}
\newtheorem{remark}[theorem]{Remark}
\newcommand*{\circled}[1]{\lower.7ex\hbox{\tikz\draw (0pt, 0pt)%
		circle (.5em) node {\makebox[1em][c]{\small #1}};}}
\begin{document}
	
	\title{\bf\Large Generalized parallel paths method for computing the first Hochschild cohomology group with applications to Brauer graph algebras}
	\author{Yuming Liu$^a$ and Bohan Xing$^{a,*}$}
	\maketitle
	
	\renewcommand{\thefootnote}{\alph{footnote}}
	\setcounter{footnote}{-1} \footnote{\it{Mathematics Subject
			Classification(2020)}: 16E40, 16Gxx.}
	\renewcommand{\thefootnote}{\alph{footnote}}
	\setcounter{footnote}{-1} \footnote{\it{Keywords}: Algebraic Morse theory; Brauer graph algebra; First Hochschild cohomology group; Generalized parallel paths method; Two-sided Anick resolution.}
	\setcounter{footnote}{-1} \footnote{$^a$School of Mathematical Sciences, Laboratory of Mathematics and Complex Systems, Beijing Normal University,
		Beijing 100875,  P. R. China. E-mail: ymliu@bnu.edu.cn (Y.M. Liu); bhxing@mail.bnu.edu.cn (B.H. Xing).}
	\setcounter{footnote}{-1} \footnote{$^*$Corresponding author.}
	
	{\noindent\small{\bf Abstract:} We use algebraic Morse theory to generalize the parallel paths method for computing the first Hochschild cohomology group. As an application, we describe and compare the Lie structure of the first Hochschild cohomology groups of Brauer graph algebras and their associated graded algebras. }
		
		\section{Introduction}
		
		It is well-known that the Hochschild cohomology groups are important invariants of associative algebras under derived equivalences. The computation of Hochschild cohomology groups is heavily based on a two-sided projective resolution of a given algebra. The smaller of the size of this projective resolution, the more efficient the computation. For monomial algebras, Bardzell \cite{Bardzell} constructed minimal two-sided projective resolutions. Based on the minimal two-sided projective resolution, Strametz \cite{Strametz} introduced the parallel paths method to compute the first Hochschild cohomology group of a monomial algebra.
		
		One aim of the  present paper is to generalize Strametz's parallel paths method for computing the first Hochschild cohomology group from monomial algebras to arbitrary quiver algebras of the form $kQ/I$, where $Q$ is a finite quiver and $I$ is an ideal of the path algebra $kQ$ contained in $kQ_{\geq 2}$. Our idea is to use the two-sided Anick resolution, which is based on algebraic Morse theory, to replace Bardzell's minimal two-sided projective resolution.
		
		About fifteen years ago, algebraic Morse theory was developed by Kozlov \cite{K}, by Sk\"{o}ldberg \cite{ES}, and by J\"{o}llenbeck and Welker \cite{JW}, independently. Since then, this theory has been widely used in algebra; for further references on this direction, see for example the introduction in \cite{CLZ}. In particular, Sk{\"o}ldberg \cite{ES} applied this theory to construct the so-called two-sided Anick resolution from the reduced bar resolution of a non-commutative polynomial algebra. Chen, Liu and Zhou \cite{CLZ} generalized the two-sided Anick resolution from non-commutative polynomial algebras to algebras given by quivers with relations.
		
		For a monomial algebra $kQ/I$, the ideal $I$ has a minimal generating set $Z$ given by paths in $Q$, which is one of ingredients in Strametz's construction. For an arbitrary quiver algebra $kQ/I$, we use the Gr\"{o}bner basis of $I$ to replace the above set $Z$. In order to generalize Strametz's construction, we use the two-sided Anick resolution (which is for an arbitrary quiver algebra) to replace Bardzell's minimal two-sided projective resolution (which is only for a monomial algebra). Similar to Strametz's method in \cite{Strametz}, we give a method to compute the zeroth and the first Hochschild cohomology groups by using parallel paths.
			\begin{proposition} {\rm(see Proposition \ref{gen-parallel paths})}
			Let ${A}=kQ/I$ be a quiver algebra such that $I$ has a finite reduced Gr{\"o}bner basis $\mathcal{G}$. Then the zeroth and the first Hochschild cohomology groups of $A$ can be computed by the following complex
			\[\begin{tikzcd}
				0 & {k(Q_0//\mathcal{B})} & {k(Q_1//\mathcal{B})} & {k(\mathrm{Tip}(\mathcal{G})//\mathcal{B})} & \cdots
				\arrow[from=1-1, to=1-2]
				\arrow["{\psi_0}", from=1-2, to=1-3]
				\arrow["{\psi_1}", from=1-3, to=1-4]
				\arrow[from=1-4, to=1-5]
			\end{tikzcd}\]
			where the maps are given by
			$$	\begin{array}{*{5}{lllll}}
				\psi_0& :&k(Q_0//\mathcal{B}) &\rightarrow&k(Q_1//\mathcal{B}),\\
				&& (e,\gamma)&\mapsto&\sum_{\alpha\in Q_1e}(\alpha,\pi(\alpha\gamma))-\sum_{\beta\in eQ_1}(\beta,\pi(\gamma\beta));\\\\
				\psi_1&: &k(Q_1//\mathcal{B})&\rightarrow&k(\mathrm{Tip}(\mathcal{G})//\mathcal{B}),\\
				&&(\alpha,\gamma)&\mapsto&\sum_{g\in\mathcal{G}}\sum_{p\in \mathrm{Supp}(g)}c_g(p)\cdot(\mathrm{Tip}(g),\pi(p^{(\alpha,\gamma)})).
			\end{array}$$
			with $g=\sum_{p\in \mathrm{Supp}(g)}c_g(p)p$, $c_g(p)\in k$.
			
			In particular, we have $\mathrm{HH}^0({A})\cong \mathrm{Ker}\psi_0$, $\mathrm{HH}^1({A})\cong \mathrm{Ker}\psi_1/\mathrm{Im}\psi_0$. 
		\end{proposition}
		Moreover, we also describe the Lie algebra structure on the first Hochschild cohomology group as follows.
		\begin{theorem} {\rm(see Theorem \ref{gen-lie bracket})}
			The bracket $$[(\alpha,\gamma),(\beta,\varepsilon)]=(\beta,\pi(\varepsilon^{(\alpha,\gamma)}))-(\alpha,\pi(\gamma^{(\beta,\varepsilon)}))$$
			for all $(\alpha,\gamma),(\beta,\varepsilon)\in Q_1//\mathcal{B}$ induces a Lie algebra structure on $\mathrm{Ker}\psi_1/\mathrm{Im}\psi_0$, such that $\mathrm{HH}^1({A})$ and $\mathrm{Ker}\psi_1/\mathrm{Im}\psi_0$ are isomorphic as Lie algebras.
		\end{theorem}
		It should be noted that, Artenstein, Lanzilotta and Solotar \cite{ALS} recently studied the Hochschild cohomology of toupie algebras. The cochain complex  obtained in \cite[Section 3]{ALS} to compute $\mathrm{HH}^1$ for toupie algebras coincides with the cochain complex in our Proposition \ref{gen-parallel paths}.

		In Section 4, we apply our method to study the first Hochschild cohomology groups of Brauer graph algebras (abbr. BGAs). These algebras coincide with finite dimensional symmetric special biserial algebras (see for example in \cite{Sch}). When the ground field $k$ satisfies condition $(*)$ in Definition \ref{condition*}, we can describe explicitly a $k$-basis of the first Hochschild cohomology group $\mathrm{HH}^1(A)$ of a BGA  (Theorem \ref{gen-set of A}), and compare it with a $k$-basis of the first Hochschild cohomology group $\mathrm{HH}^1(gr(A))$ of the associated graded algebra (Theorem \ref{gen-set of grA}). Based on these results, we describe when $\mathrm{HH}^1(A)$ and $\mathrm{HH}^1(gr(A))$ are solvable.
		\begin{corollary} {\rm(see Corollary \ref{A-solvable})}
			Let $A$ be a BGA over a field of characteristic zero such that the corresponding Brauer graph $G$ is different from	
			$(\begin{tiny}	
				\begin{tikzcd}
					\bullet & \bullet
					\arrow[shift left=1, no head, from=1-1, to=1-2]
					\arrow[shift right=1, no head, from=1-1, to=1-2]
				\end{tikzcd}
			\end{tiny})$ (in this case, both vertices have multiplicity $1$). Then $\mathrm{HH}^1(A)$ is solvable.
		\end{corollary}
			\begin{theorem} {\rm(see Theorem \ref{grA-solvable})}
			Let $A$ be a BGA over a field of characteristic zero such that the corresponding Brauer graph $G$ is different from	
			$(\begin{tiny}	
				\begin{tikzcd}
					\bullet & \bullet
					\arrow[shift left=1, no head, from=1-1, to=1-2]
					\arrow[shift right=1, no head, from=1-1, to=1-2]
				\end{tikzcd}
			\end{tiny})$ (in this case, both vertices have multiplicity $1$) and $gr(A)$ the associated graded algebra of $A$. Then $\mathrm{HH}^1(gr(A))$ is solvable.
		\end{theorem}
		In particular, we construct an injection $i$ from $\mathrm{HH}^1(A)$ to $\mathrm{HH}^1(gr(A))$ as follows.
		\begin{theorem} {\rm(see Theorem \ref{inj-map})}
			Let $A$ be a BGA associated with a Brauer graph $G$, and $gr(A)$ the associated graded algebra of $A$. If $G\neq (v_S{-}v_L)$ with $m(v_L)>m(v_S)\geq 2$,  then there is a monomorphism $i$ from $\mathrm{HH}^1(A)$ to $\mathrm{HH}^1(gr(A))$ as Lie algebras.
		\end{theorem}
		 In Example \ref{not injection}, we show that if the assumptions of the above theorem are not satisfied, then $i$ is not a Lie algebra monomorphism in general. 
		
		 Denote the identity component of the outer automorphism group of $A$ by $\mathrm{Out}(A)^{\circ}$. The injection $i$ also gives the following result.
		 	\begin{corollary} {\rm(see Corollary \ref{diff2})}
		 	The difference between the dimension of $\mathrm{HH}^1(A)$ and of $\mathrm{HH}^1(gr(A))$ is equal to the difference between the rank of $\mathrm{Out}(A)^{\circ}$ and of $\mathrm{Out}(gr(A))^{\circ}$. In particular, it is also equal to the difference between their dimensions of the corresponding maximal dual fundamental groups.
		 \end{corollary}
		
		\medskip
		After we submitted this paper on arXiv, we noticed that Rubio y Degrassi, Schroll and Solotar have recently obtained similar results in \cite{RSS}. However, our generalized parallel paths method for computing $\mathrm{HH}^1$ is deduced from two-sided Anick resolutions using algebraic Morse theory, rather than directly using the Chouhy-Solotar projective resolution which is constructed in \cite{CS}. Moreover, we described explicitly a $k$-basis of $\mathrm{HH}^1(A)$ when $A$ is a BGA, and our comparison study on the first Hochschild cohomology groups between BGAs and their associated graded algebras is also new. Finally, we provide a counterexample (Example \ref{counter-example}) to Theorem 4.2 in \cite{RSS} in positive characteristic.
		
		\medskip
		\textbf{Outline.} \;In Section 2, we introduce some preliminaries and give some notations which we need throughout this paper. In particular, we will review the Gr{\"o}bner basis theory for path algebras and the two-sided Anick resolutions for quiver algebras based on algebraic Morse theory. In Section 3, we generalize the parallel paths method for computing $\mathrm{HH}^1$ from monomial algebras to general quiver algebras. In Sections 4 and 5, we apply the generalized parallel paths method to BGAs and their associated graded algebras. Moreover, we give a simple formula (Corollary \ref{dim(A-grA)}) for the difference between the dimensions of the first Hochschild cohomology groups of a BGA and its associated graded algebra.

		\section{Preliminaries}
		
		Throughout this paper we will concentrate on quiver algebras of the form $kQ/I$, where $k$ is a field, $Q$ is a finite quiver, $I$ is a two-sided ideal in the path algebra $kQ$. For each integer $n\geq 0$, we denote by $Q_n$ the set of all paths of length $n$ and by $Q_{\geq n}$ the set of all paths with length at
		least $n$. We shall assume that the ideal $I$ is contained in $kQ_{\geq 2}$ so that $kQ/I$ is not necessarily finite dimensional. We denote by $s(p)$ the source vertex of a path $p$ and by $t(p)$ its terminus vertex. We will write paths from right to left, for example, $p=\alpha_{n}\alpha_{n-1}\cdots\alpha_{1}$ is a path with starting arrow $\alpha_{1}$ and ending arrow $\alpha_{n}$.  
		The length of a path $p$ will be denoted by $l(p)$. Two paths $\varepsilon,\gamma$ of $Q$ are called parallel if $s(\varepsilon)=s(\gamma)$ and $t(\varepsilon)=t(\gamma)$. An element in $kQ$ is called uniform if it is a linear combination of parallel paths. If $X$ and $Y$ are sets of paths of $Q$, the set $X//Y$ of parallel paths is formed by the couples $(\varepsilon,\gamma)\in X\times Y$ such that $\varepsilon$ and $\gamma$ are parallel paths. For instance, $Q_0//Q_n$ is the set of oriented cycles of $Q$ of length $n$. We denote by $k(X//Y)$ the $k$-vector space generated by the set $X//Y$. For a subset $S$ of $k(X//Y)$, we denote by $\langle S\rangle$ the subspace of $k(X//Y)$ generated by $S$. By abuse of notation, for a subset $S$ of the algebra $kQ/I$, we also use $\langle S\rangle$ to denote the ideal generated by $S$.
		
		\subsection{Gr{\"o}bner bases of quiver algebras}
		
		Let $A=kQ/I$ be a quiver algebra where	$I$ is generated by a set of relations. In this subsection we recall from \cite{Green} the Gr{\"o}bner basis theory for the ideal $I$. Let us first introduce a special kind of well-order (that is, a total order such that every nonempty subset has a minimal element) on the basis $Q_{\geq 0}$ of the path algebra $kQ$. By \cite[Section 2.2.2]{Green}, a well-order $>$ on $Q_{\geq 0}$ is called admissible if it satisfies the following conditions where $p,q,r,s\in Q_{\geq 0}$:
		\begin{itemize}
			\item if $p<q$, then $pr<qr$ where both $pr\neq 0$ and $qr\neq 0$.
			
			\item  if $p<q$, then $sp<sq$ where both $sp\neq 0$ and $sq\neq 0$.
			
			\item  if $p=qr$, then $p\geq q$ and $p\geq r$.
		\end{itemize}
		
		Given a quiver $Q$ as above, there are ``natural'' admissible orders on $Q_{\geq 0}$. Here is one example:
		
		\begin{example}
			The (left) length-lexicographic order on $Q_{\geq 0}$:
			
			Order the vertices $Q_0=\{v_1,\cdots,v_n\}$ and arrows $Q_1=\{a_1,\cdots,a_m\}$ arbitrarily and set the vertices smaller than the arrows. Thus for example, let $$v_1<\cdots<v_n<a_1<\cdots<a_m.$$
			If $p$ and $q$ are paths of length at least $1$, set $p<q$ if $l(p)<l(q)$ or if $p=b_1\cdots b_r$ and $q=b_1'\cdots b_r'$ with $b_1,\cdots,b_r,b_1',\cdots,b_r'\in Q_1$ and for some $1\leq i\leq r$, $b_j=b_j'$ for $j<i$ and $b_i<b_i'$.
		\end{example}
		We now fix an admissible well-order $>$ on $Q_{\geq 0}$. For each $a\in kQ$, we have $a=\sum_{p\in Q_{\geq 0},\;\lambda_p\in k}\lambda_p p$ and write $\mathrm{Supp}(a)=\{p\mid \lambda_p\neq 0\}$. 
		
		We call $\mathrm{Tip}(a)=p$,  if $p\in \mathrm{Supp}(a)$ and $p'\leq p$ for all $p'\in \mathrm{Supp}(a)$. Then we denote the tip of a set $W\subseteq kQ$ by $\mathrm{Tip}(W)=\{\mathrm{Tip}(w)\;|\;w\in W\}$ and write $\mathrm{NonTip}(W):=Q_{\geq 0}\backslash \mathrm{Tip}(W)$. We also denote the coefficient of the tip of $a$ by $\mathrm{CTip}(a)$. In particular, we will use $\mathrm{Tip}(I)$ and $\mathrm{NonTip}(I)$ for the ideal $I$ of $kQ$.
		By \cite{Green}, there is a decomposition of vector spaces
		$$kQ=I\oplus \mathrm{Span}_{k}(\mathrm{NonTip}(I)).$$
		So $\mathrm{NonTip}(I)$ (modulo $I$) gives a ``monomial" $k$-basis of the quotient algebra $A=kQ/I$.
		
		\begin{definition} $(\cite[~Definition~2.4]{Green})$
			With the notations as above, a subset $\mathcal{G}$ of uniform elements in $I$ is a Gr{\"o}bner basis for the ideal $I$ with respect to the order $>$ if $$\langle \mathrm{Tip}(I)\rangle=\langle \mathrm{Tip}(\mathcal{G})\rangle,$$
			that is, $\mathrm{Tip}(I)$ and $\mathrm{Tip}(\mathcal{G})$ generate the same ideal in $kQ$.
		\end{definition}
		
		Indeed, note that $I=\langle \mathcal{G}\rangle$. By the discussion in \cite{Green}, we have a complete method to judge whether a set of generators of an ideal $I$ in $kQ$ is a Gr{\"o}bner basis, which is called the Termination Theorem. The idea is checking whether some special elements of the ideal $I$ are divisible by this basis, instead of to check all the elements in $I$.
		
		\begin{definition} $(\cite[~Definition~2.7]{Green})$
			Let $kQ$ be a path algebra, $>$ an admissible order on $Q_{\geq 0}$ and $f,g\in kQ$. Suppose $b,c\in Q_{\geq 0}$, such that
			\begin{itemize}
				\item $\mathrm{Tip}(f)c=b\mathrm{Tip}(g)$,
				\item $\mathrm{Tip}(f)\nmid b$, $\mathrm{Tip}(g)\nmid c$.
			\end{itemize}
			Then the overlap relation of $f$ and $g$ by $b,c$ is
			$$o(f,g,b,c)=(\mathrm{CTip}(f))^{-1}\cdot fc-(\mathrm{CTip}(g))^{-1}\cdot bg.$$
			Note that $\mathrm{Tip}(o(f,g,b,c))<\mathrm{Tip}(f)c=b\mathrm{Tip}(g)$.
		\end{definition}
		
		\begin{theorem} $(\cite[~Theorem~2.3]{Green})$\label{Termin.Thm}
			Let $kQ$ be a path algebra, $>$ an admissible order on $Q_{\geq 0}$, $\mathcal{G}$ a set of uniform elements of $kQ$. Suppose for every overlap relation, we have $$o(g_1,g_2,p,q)\Rightarrow_{\mathcal{G}} 0,$$
			which means that $o(g_1,g_2,p,q)$ can be divided by $\mathcal{G}$ (see for example in \cite[Section 2.3]{Green}), with $g_1,g_2\in\mathcal{G}$ and $p,q\in Q_{\geq 0}$. Then $\mathcal{G}$ is a Gr{\"o}bner basis of the ideal generated by $\mathcal{G}$, that is, $\langle \mathcal{G}\rangle$.
		\end{theorem}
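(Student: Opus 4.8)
The plan is to prove this statement --- Green's Termination Theorem, a non-commutative Buchberger criterion --- by the standard ``standard-representation'' argument. Put $I=\langle\mathcal{G}\rangle$; since $\mathcal{G}\subseteq I$ the inclusion $\langle\mathrm{Tip}(\mathcal{G})\rangle\subseteq\langle\mathrm{Tip}(I)\rangle$ is free, so it suffices to show $\mathrm{Tip}(f)\in\langle\mathrm{Tip}(\mathcal{G})\rangle$ for every nonzero $f\in I$. I would deduce this from the claim that every nonzero $f\in I$ admits a \emph{standard representation}
\[
f=\sum_{i}\lambda_i\,p_i\,g_{(i)}\,q_i,\qquad \lambda_i\in k\setminus\{0\},\ g_{(i)}\in\mathcal{G},\ p_i,q_i\in Q_{\geq 0},\ \ p_i\,\mathrm{Tip}(g_{(i)})\,q_i\leq\mathrm{Tip}(f).
\]
Granting the claim, choose an index $j$ with $p_j\,\mathrm{Tip}(g_{(j)})\,q_j$ largest among the occurring monomials; being the largest it cannot be cancelled, so $\mathrm{Tip}(f)=p_j\,\mathrm{Tip}(g_{(j)})\,q_j$ and $\mathrm{Tip}(g_{(j)})\mid\mathrm{Tip}(f)$. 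Admissibility of $>$ is used throughout --- to know $\mathrm{Tip}(p\,h\,q)=p\,\mathrm{Tip}(h)\,q$ whenever the product is nonzero and that $p\leq q$ forces $u p v\leq u q v$ when the latter are nonzero, and to supply the inequality $\mathrm{Tip}(o(f,g,b,c))<\mathrm{Tip}(f)c$ recorded after the definition of overlap relations.

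To prove the claim I would induct on the monomial $m$ in the well-order $>$: assuming the claim for every element of $I$ admitting a representation whose largest monomial is $<m$, take $f\in I$ with a representation $f=\sum_i\lambda_i p_i g_{(i)}q_i$ ($p_i,q_i$ paths) whose largest monomial $p_i\,\mathrm{Tip}(g_{(i)})\,q_i$ equals $m$. If $m\leq\mathrm{Tip}(f)$ we are done; otherwise $m\notin\mathrm{Supp}(f)$, and since each summand hitting $m$ contributes $\lambda_i\mathrm{CTip}(g_{(i)})$ to the coefficient of $m$ we get $\sum_{i\text{ hits }m}\lambda_i\mathrm{CTip}(g_{(i)})=0$. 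Writing $\mu_i=\lambda_i\mathrm{CTip}(g_{(i)})$ and $h_i=\mathrm{CTip}(g_{(i)})^{-1}p_i g_{(i)}q_i$ (monic, tip $m$) for the indices hitting $m$, and fixing one such $i_0$, the relation $\sum\mu_i=0$ gives $\sum_{i\text{ hits }m}\lambda_i p_i g_{(i)}q_i=\sum_{i\neq i_0}\mu_i(h_i-h_{i_0})$; so it is enough to rewrite each $h_i-h_{i_0}$ as an element of $I$ admitting a representation with largest monomial $<m$, for then the inductive hypothesis furnishes standard representations of the $h_i-h_{i_0}$ and substituting them back produces a standard representation of $f$.

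Now I would analyze $h_i-h_{i_0}$ according to how the marked occurrences of $\mathrm{Tip}(g_{(i)})$ and $\mathrm{Tip}(g_{(i_0)})$ sit inside the path $m$. If they are disjoint subpaths, then replacing the unchanged occurrence of the other generator's tip by that generator modulo strictly smaller terms (i.e.\ $\mathrm{Tip}(g)=\mathrm{CTip}(g)^{-1}g-r_g$ with $\mathrm{Tip}(r_g)<\mathrm{Tip}(g)$) in each of $h_i$ and $h_{i_0}$ makes both equal to $\mathrm{CTip}(g_{(i)})^{-1}\mathrm{CTip}(g_{(i_0)})^{-1}\,a\,g_{(i)}\,b\,g_{(i_0)}\,c$ plus standard terms of tip $<m$ (here $a,b,c$ are the three pieces of $m$ cut out by the two occurrences), so $h_i-h_{i_0}$ has tip $<m$, using no hypothesis. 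If the occurrences overlap, or one is a prefix or a suffix of the other, then, cutting $m=u\,w\,v$ at their joint span, one has $w=\mathrm{Tip}(g_{(i)})\,c=b\,\mathrm{Tip}(g_{(i_0)})$ with $\mathrm{Tip}(g_{(i)})\nmid b$ and $\mathrm{Tip}(g_{(i_0)})\nmid c$ (the length comparisons force these), so $o(g_{(i)},g_{(i_0)},b,c)$ is exactly one of the overlap relations covered by the hypothesis, $h_i-h_{i_0}=u\cdot o(g_{(i)},g_{(i_0)},b,c)\cdot v$, and $o(g_{(i)},g_{(i_0)},b,c)\Rightarrow_{\mathcal{G}}0$ gives a representation with all tips $\leq\mathrm{Tip}(o(g_{(i)},g_{(i_0)},b,c))<w$, hence all tips $<u w v=m$ after multiplying by $u$ and $v$. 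The one remaining configuration --- one tip lying strictly in the interior of the other --- is not itself an overlap relation; there one first replaces the outer generator in the representation of $f$ by the inner one, which strictly lowers a secondary measure (the total length of the generator-tips among the summands hitting $m$), and repeats, reducing to the cases above.

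Since $>$ is a well-order and the auxiliary length measure takes values in $\mathbb{N}$, this double induction terminates, yielding a standard representation of every $f\in I$; hence $\langle\mathrm{Tip}(I)\rangle=\langle\mathrm{Tip}(\mathcal{G})\rangle$, i.e.\ $\mathcal{G}$ is a Gr\"obner basis of $I=\langle\mathcal{G}\rangle$. The main obstacle, and essentially the whole content of the argument, is the combinatorial bookkeeping of the previous paragraph: carefully enumerating the ways two marked subpaths of one path can meet, checking that in the overlapping and prefix/suffix cases one lands precisely on an overlap relation of the form quantified over in the hypothesis, and organizing the secondary induction that disposes of interior inclusions.
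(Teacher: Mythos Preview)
The paper does not give its own proof of this theorem: it is stated with the citation $(\cite[~\text{Theorem}~2.3]{Green})$ and used as a black box, so there is nothing in the paper to compare your proposal against.

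As for your argument itself, the overall strategy---produce a standard representation of every $f\in\langle\mathcal{G}\rangle$ by well-founded induction on the largest monomial occurring in some representation, and at the inductive step rewrite cancelling top terms pairwise via the relevant overlap relation---is the correct and classical one (it is essentially Bergman's diamond lemma / the noncommutative Buchberger criterion). One point deserves more care: your treatment of the ``interior inclusion'' configuration, where $\mathrm{Tip}(g_{(i_0)})$ sits strictly inside $\mathrm{Tip}(g_{(i)})$, is vague. As the overlap relation is defined here (with $\mathrm{Tip}(f)\nmid b$ and $\mathrm{Tip}(g)\nmid c$), genuine interior inclusions are \emph{not} overlap relations, and the sentence ``replace the outer generator in the representation of $f$ by the inner one'' does not by itself produce a representation of $h_i-h_{i_0}$ with smaller maximal monomial. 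The standard fix is either to assume $\mathcal{G}$ is tip-reduced (so no inclusions occur), or to show that an inclusion ambiguity factors through two overlap ambiguities at the ends of the inner tip; if you want a self-contained proof you should spell out one of these. Otherwise your sketch is sound.
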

		
		\begin{definition} $(\mathrm{c.f.}~\cite[~Section~4]{CLZ})$ A Gr\"{o}bner basis $\mathcal{G}$ for the ideal $I$ is reduced if the following three conditions are satisfied: \begin{itemize}
				\item $\mathcal{G}$ is tip-reduced: for $g,h\in \mathcal{G}$ with $g\neq h$, $\mathrm{Tip}(g)\nmid \mathrm{Tip}(h)$;
				\item $\mathcal{G}$ is monic: for every element $g\in \mathcal{G}$, $\mathrm{CTip}(g)=1$;
				\item For any $g\in \mathcal{G}$, $g-\mathrm{Tip}(g)\in \mathrm{Span}_{k}(\mathrm{NonTip}(I))$.
			\end{itemize}
		\end{definition}
		
		It is obvious that under a given admissible order, $I$ has a unique reduced Gr\"{o}bner basis $\mathcal{G}$, and in this case, $\mathrm{Tip}(\mathcal{G})$ is a minimal generating set of $\langle\mathrm{Tip}(I)\rangle$. Moreover, $b\in Q_{\geq 0}$ lies in $\mathrm{NonTip}(I)$ if and only if $b$ cannot be divided by any element of $\mathrm{Tip}(\mathcal{G})$. In the following, we always assume that $\mathcal{G}$ is a reduced Gr\"{o}bner basis of $I$. 
		
		Note that when $\mathcal{G}$ is reduced, there is a one-to-one correspondence between $\mathcal{G}$ and $\mathrm{Tip}(\mathcal{G})$:  for $ g\in\mathcal{G}$, $\mathrm{Tip}(g)\in \mathrm{Tip}(\mathcal{G})$; conversely, for $w\in\mathrm{Tip}(\mathcal{G})$, there is a unique $g\in\mathcal{G}$ such that $w=\mathrm{Tip}(g)$. We shall denote the correspondence from $\mathcal{G}$ to $\mathrm{Tip}(\mathcal{G})$ by $\mathrm{Tip}$ and its inverse by $\mathrm{Tip^{-1}}$.

		\subsection{The reduced bar resolution of quiver algebras}
		
		Let $E:\simeq\oplus_{e\in Q_0}ke$ be the separable subalgebra of $A$ generated by the classes modulo $I$ of the vertices of $Q$, such that $A=E\oplus A_+$ as $E^e$-modules, where $E^e=E\otimes_k E$ and $A_+:=\mathrm{Span}_k\{\mathrm{NonTip}(I)\backslash Q_0\}$. There is an $E^e$-projection from $A$ to $A_+$, denoted by $p_A$. The reduced bar resolution of the quiver algebra $A$ can be written by the following theorem in the sense of Cibils \cite{C}.
		
		\begin{theorem}
			For the algebra $A=kQ/I$, the reduced bar resolution $B(A)$ is a two-sided projective resolution of $A$ with $B_0(A)=A\otimes_EA$, $B_n(A)=A\otimes_E(A_+)^{\otimes_En}\otimes_EA$ and the differential $d=(d_n)$ is
			
			$$d_n([a_1|\cdots|a_n])=a_1[a_2|\cdots|a_n]+\sum_{i=1}^{n-1}(-1)^i[a_1|\cdots|a_ia_{i+1}|\cdots|a_n]+(-1)^n[a_1|\cdots|a_{n-1}]a_n$$	
			with $[a_1|\cdots|a_n]:=1\otimes a_1\otimes\cdots\otimes a_n\otimes 1$. By convention $B_{-1}(A)=A$, and $d_0:A\otimes_EA\longrightarrow A$ is given by the multiplication $\mu_A$ in $A$.
		\end{theorem}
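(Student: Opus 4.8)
The plan is to verify the three defining properties of a projective resolution in turn: each $B_n(A)$ is projective as an $A^e$-module, $d\circ d=0$, and the augmented complex $B(A)\to A\to 0$ is exact. Two elementary structural facts are used throughout. First, $E\cong\prod_{e\in Q_0}k$ is a separable $k$-algebra, so $E^e=E\otimes_kE$ is semisimple and every $E^e$-module is projective. Second, $A_+$ is precisely the arrow ideal of $A$; hence it is a two-sided ideal of $A$ (in particular $A_+\cdot A_+\subseteq A_+$), and $A=E\oplus A_+$ as $E^e$-modules, with $p_A$ the projection onto $A_+$. Together these guarantee that all the face maps occurring in $d_n$ are well defined and $E^e$-balanced on the reduced (normalized) complex $B(A)$.

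For projectivity I would observe that the functor $A\otimes_E(-)\otimes_EA$ from $E^e$-modules to $A^e$-modules carries the free module $E^e$ to $A\otimes_EE^e\otimes_EA\cong A\otimes_kA=A^e$, hence carries projectives to projectives; applying it to the projective $E^e$-module $(A_+)^{\otimes_En}$ shows that $B_n(A)=A\otimes_E(A_+)^{\otimes_En}\otimes_EA$ is projective over $A^e$. The identity $d\circ d=0$ is the standard bar-complex computation: writing $d_n=\sum_{i=0}^{n}(-1)^i\partial_i$, with $\partial_0,\partial_n$ the outer multiplications and $\partial_i$ ($0<i<n$) the internal multiplication $a_ia_{i+1}$, one checks the simplicial identities $\partial_i\partial_j=\partial_{j-1}\partial_i$ for $i<j$, and $d^2=0$ then follows formally by the alternating-sign cancellation; only associativity of $A$ and of its left and right actions enters here.

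The substantive step is exactness, for which I would exhibit a contracting homotopy. It suffices to construct left-$A$-linear maps $h_{-1}\colon A\to B_0(A)$ and $h_n\colon B_n(A)\to B_{n+1}(A)$ with $d_0h_{-1}=\mathrm{id}_A$ and $d_{n+1}h_n+h_{n-1}d_n=\mathrm{id}_{B_n(A)}$ for all $n\ge 0$, since a complex of $A^e$-modules that becomes contractible after forgetting the right $A$-action is exact. The homotopy is the reduced analogue of the classical one: set $h_{-1}(a)=a\otimes 1$ and, up to sign, $h_n\bigl(a_0[a_1|\cdots|a_n]b\bigr)=\pm\,a_0[a_1|\cdots|a_n\,|\,p_A(b)]\otimes 1$ — that is, absorb the rightmost tensor factor into the bar after projecting it into $A_+$, and leave $1$ on the right. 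The point of using $p_A$ and the splitting $A=E\oplus A_+$ is exactly that the $E$-component of $b$ disappears across $\otimes_E$, which is what makes $h$ both land in the reduced complex and satisfy the telescoping identity. I expect this last verification — checking that $h$ is well defined on $(A_+)^{\otimes_En}$ and that $dh+hd=\mathrm{id}$ holds with the precise signs of the displayed differential $d_n$ — to be the only genuinely fiddly part; everything else is routine, and the argument is in substance that of Cibils \cite{C}.
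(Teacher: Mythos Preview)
The paper does not actually prove this theorem: it is stated as a known result ``in the sense of Cibils \cite{C}'' and no argument is given. Your outline is the standard proof and is essentially correct --- projectivity via the separability of $E$ (so that $(A_+)^{\otimes_E n}$ is $E^e$-projective and $A\otimes_E(-)\otimes_E A$ preserves projectives), $d^2=0$ from the simplicial identities, and exactness from a one-sided contracting homotopy built using the $E^e$-splitting $A=E\oplus A_+$. This is exactly the approach in Cibils's paper, so there is nothing to compare: you have supplied the proof the paper chose to omit.

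Two minor points worth tightening. First, your claim that ``$A_+$ is precisely the arrow ideal'' is correct but deserves one line of justification: since $I\subseteq kQ_{\ge 2}$, the decomposition $kQ=kQ_0\oplus kQ_{\ge 1}$ passes to $A=E\oplus(kQ_{\ge 1}/I)$, and the Gr\"obner-basis description of $A_+$ in the paper agrees with $kQ_{\ge 1}/I$; this is what guarantees $A_+\!\cdot\! A_+\subseteq A_+$ so that the inner face maps land back in $(A_+)^{\otimes_E(n-1)}$. Second, for the homotopy, the sign that makes $d_{n+1}h_n+h_{n-1}d_n=\mathrm{id}$ work with the differential as displayed is $h_n\bigl(a_0[a_1|\cdots|a_n]b\bigr)=(-1)^{n+1}a_0[a_1|\cdots|a_n|p_A(b)]$; checking the $n=0$ case already forces this, and the general case is then a straightforward telescoping. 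With those two details pinned down, your argument is complete.
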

		
		\begin{remark}
			By the definition of $A_+$, $B_n(A)$ can be decomposed as
			$$B_n(A)=\bigoplus A[w_1|\cdots|w_n]A=\bigoplus A^e[w_1|\cdots|w_n],$$
			where $A^e=A\otimes_k A^{op}$ is the enveloping algebra of $A$ and the direct sum is taken over all $[w_1|\cdots|w_n]$ such that all $w_i\in \mathrm{NonTip}(I)\backslash Q_0$  and $w_1\cdots w_n$ is a path in $Q$.
		\end{remark}
		
		Since the reduced bar resolution is a two-sided projective resolution of $A$, we can use it to compute the Hochschild cohomology groups $\mathrm{HH}^n(A)$ of $A$. More concretely, applying the functor $\mathrm{Hom}_{{A}^e}(-,{A})$ to $B(A)$ we get a cochain complex $(C^*(A),\delta^*)$, where $$C^0(A)\cong\mathrm{Hom}_{E^e}(E,A)\cong A^E=\{a\in A\mid sa=as\text{ for all }s\in E\},$$ $$C^n(A)=\mathrm{Hom}_{A^e}(B_n(A),A)\cong \mathrm{Hom}_{E^e}((A_+)^{\otimes_En},A)$$ for $n\geq 1$ (c.f. Lemma \ref{identification-1}), $(\delta^0a)(x)=ax-xa$ for $a\in A^E$ and $x\in A_+$, and $$(\delta^nf)(x_1\otimes\cdots\otimes x_{n+1})=x_1f(x_2\otimes\cdots\otimes x_{n+1})$$ $$+\sum_{i=1}^{n}(-1)^if(x_1\otimes\cdots\otimes x_ix_{i+1}\otimes\cdots\otimes x_{n+1})+(-1)^{n+1}f(x_1\otimes\cdots\otimes x_n)x_{n+1}.$$ 
		Then we have $$\mathrm{HH}^n(A)=\mathrm{Ker}\delta^n/\mathrm{Im}\delta^{n-1}.$$
		In particular, we have $\mathrm{HH}^{1}(A)= \mathrm{Ker}\delta^{1}/\mathrm{Im}\delta^{0}$ as $k$-spaces, where $\mathrm{Ker}\delta^{1}$ is the set of $E^e$-derivations of $A_+$ into $A$ and the elements in $\mathrm{Im}\delta^{0}$ are inner $E^e$-derivations of $A_+$ into $A$. Note that we can identify $\mathrm{Der}_{E^e}(A_+,A)$ with $\mathrm{Der}_{E^e}(A,A)$. Moreover, $\mathrm{Ker}\delta^{1}=\mathrm{Der}_{E^e}(A_+,A)$ has a Lie algebra structure under the Lie bracket
		$$[f,g]_{HH}:=f\circ p_A\circ g-g\circ p_A\circ f$$
		for $f,g\in \mathrm{Der}_{E^e}(A_+,A)$, where $p_A$ denotes the $E^e$-projection from $A$ to $A_+$. Moreover, $\mathrm{Im}\delta^{0}$ is a Lie ideal of $\mathrm{Ker}\delta^{1}$, so that $\mathrm{HH}^{1}(A)$ is a Lie algebra. This structure was first defined by Gerstenhaber \cite{Ger} using the standard bar resolution of $A$.
		
In next two subsections, we will explain how to use the algebraic Morse theory to shrink the above reduced bar resolution of $A$ to a ``smaller'' one, such that the homology of the two complexes coincides.

		\subsection{Algebraic Morse theory} 
		
		The most general version of algebraic Morse theory was presented in Chen, Liu and Zhou \cite{CLZ}. For our purpose, we will adopt a Morse matching condition defined in \cite[Proposition 3.2]{CLZ}.
		
		Let $R$ be an associative ring and $C_{*}=(C_n, \partial_n)_{n\in\mathbb{Z}}$ be a chain complex of left $R$-modules. We assume that each $R$-module $C_n$ has a decomposition $C_n\simeq\oplus_{i\in I_n}C_{n,i}$ of $R$-modules, so we can regard the differentials $\partial_n$ as a matrix $\partial_n=(\partial_{n,ji})$ with $i\in I_n$ and $j\in I_{n-1}$ where $\partial_{n,ji}:C_{n,i}\rightarrow C_{n-1,j}$ is a homomorphism of $R$-modules.
		
		Given the complex $C_*$ as above, we construct a weighted quiver $G(C_*):=(V,E)$. The set $V$ of vertices of $G(C_*)$ consists of the pairs $(n,i)$ with $n\in\mathbb{Z},i\in I_n$ and the set $E$ of weighted arrows is given by the following rule:
		if the map $\partial_{n,ji}$ does not vanish, draw an arrow in E from $(n,i)$ to $(n-1,j)$ 
		and denote the weight of this arrow by the map $\partial_{n,ji}$.
		
		A full subquiver $\mathcal{M}$ of the weighted quiver $G(C_*)$ is called a partial matching if it satisfies the following two conditions:
		\begin{itemize}
			\item $(Matching)$ Each vertex in $V$ belongs to at most one arrow of $\mathcal{M}$.
			
			\item $(Invertibility)$ Each arrow in $\mathcal{M}$ has its weight invertible as a $R$-homomorphism.
			
		\end{itemize}
		With respect to a partial matching $\mathcal{M}$, we can define a new weighted quiver $G_{\mathcal{M}}(C_*)=(V,E_{\mathcal{M}})$, where $E_{\mathcal{M}}$ is given by 
		\begin{itemize}
			\item Keep everything for all arrows which are not in $\mathcal{M}$ and call them thick arrows.
			
			\item For an arrow in $\mathcal{M}$, replace it by a new dotted arrow in the reverse direction and the weight of this new arrow is the negative inverse of the weight of original arrow.
		\end{itemize}
		A path in  $G_{\mathcal{M}}(C_*)$ is called a zigzag path if dotted arrows and thick arrows appear alternately.
		
		Next, for convenience, we will introduce from J\"{o}llenbeck and Welker \cite{JW} the notations related to the weighted quiver $G(C_*)=(V,E)$ with a partial matching $\mathcal{M}$ on it.
		
		\begin{definition} 
			\begin{enumerate}[(1)]
				\item A vertex $(n,i)\in V$ is critical with respect to $\mathcal{M}$ if $(n,i)$ does not lie in any arrow in $\mathcal{M}$. Let $V_n$ denote all the vertices with the first number equal to $n$, we write
				$$V_n^{\mathcal{M}}:=\{(n,i)\in V_n\;|\;(n,i)\;is \; critical\}$$
				for the set of all critical vertices of homological degree $n$.
				
				\item Write $(m,j)\leq (n,i)$ if there exists an arrow from $(n,i)$ to $(m,j)$ in $G(C_*)$.
				
				\item Denote by $P((n,i),(m,j))$ the set of all zigzag paths from $(n,i)$ to $(m,j)$ in  $G_{\mathcal{M}}(C_*)$.
				
				\item The weight $w(p)$ of a path $$p=((n_1,i_1)\rightarrow (n_2,i_2)\rightarrow \cdots\rightarrow (n_r,i_r))\in P((n_1,i_1),(n_r,i_r))$$ in  $G_{\mathcal{M}}(C_*)$ is given by
				
				$$w(p):=w((n_{r-1},i_{r-1})\rightarrow (n_{r},i_{r}))\circ\cdots\circ w((n_1,i_1)\rightarrow (n_2,i_2)),$$
				
				$$
				w((n,i)\rightarrow (m,j)):=\left\{
				\begin{array}{*{3}{lll}}
					-\partial_{m,ij}^{-1}&,& (n,i)\leq (m,j),\\
					\partial_{n,ji}&,& (m,j)\leq (n,i).
				\end{array}
				\right.
				$$
				Then we write $\Gamma((n,i), (m,j))=\sum_{p\in P((n,i),(m,j))}w(p)$ for the sum of weights of all zigzag paths from $(n,i)$ to $(m,j)$.
				
			\end{enumerate}
		\end{definition}
		
		Following \cite[Proposition 3.2]{CLZ}, we call a partial matching $\mathcal{M}$ as above a Morse matching if any zigzag path starting from $(n,i)$ is of finite length for each vertex $(n,i)$ in $G_{\mathcal{M}}(C_*)$. 
		
		Now we can define a new complex $C_*^{\mathcal{M}}$, which we call the Morse complex of $C_*$ with respect to $\mathcal{M}$. The complex $C_*^{\mathcal{M}}=(C_n^{\mathcal{M}},\partial_n^{\mathcal{M}})_{n\in\mathbb{Z}}$ is defined by
		
		$$C_n^{\mathcal{M}}:=\oplus_{(n,i)\in V_n^{\mathcal{M}}}C_{n,i},$$
		
		$$
		\partial_n^{\mathcal{M}}:\left\{
		\begin{array}{*{3}{lll}}
			C_n^{\mathcal{M}}& \rightarrow& C_{n-1}^{\mathcal{M}}\\
			x\in C_{n,i}& \mapsto& \sum_{(n-1,j)\in V_{n-1}^{\mathcal{M}}}\Gamma((n,i),(n-1,j))(x).
		\end{array}
		\right.
		$$
		
		The main theorem of algebraic Morse theory can be stated as follows.
		
		\begin{theorem} \label{morse}
			The complex $C_*^{\mathcal{M}}$ is a complex of left $R$-modules which is homotopy equivalent to the original complex $C_*$. Moreover, the maps defined below are chain homotopies between $C_*$ and $C_*^{\mathcal{M}}$:			
			$$
			f:\left\{
			\begin{array}{*{3}{lll}}
				C_n& \rightarrow& C_n^{\mathcal{M}}\\
				x\in C_{n,i}& \mapsto& \sum_{(n,j)\in V_{n}^{\mathcal{M}}}\Gamma((n,i),(n,j))(x),
			\end{array}
			\right.
			$$			
			$$
			g:\left\{
			\begin{array}{*{3}{lll}}
				C_n^{\mathcal{M}}& \rightarrow& C_n\\
				x\in C_{n,i}& \mapsto& \sum_{(n,j)\in V_{n}}\Gamma((n,i),(n,j))(x).
			\end{array}
			\right.
			$$			
		\end{theorem}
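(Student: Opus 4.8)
The plan is to realize the Morse complex $C_*^{\mathcal M}$ as the image of a strong deformation retract (contraction) of $C_*$, obtained via the homological perturbation lemma, and then to identify the transferred structure maps with the zigzag-path formulas in the statement. Concretely, I would first split the differential as $\partial=D+\delta$, where $D$ collects precisely those components $\partial_{n,ji}\colon C_{n,i}\to C_{n-1,j}$ for which $(n,i)\to(n-1,j)$ lies in $\mathcal M$ — these being invertible by the \emph{invertibility} condition — and $\delta$ collects all the remaining components. The \emph{matching} condition (each vertex lies in at most one arrow of $\mathcal M$) forces $D^2=0$, since a vertex that is the target of a matching arrow cannot also be its source, and it shows likewise that $D$ vanishes on every $C_{n,i}$ with $(n,i)$ critical. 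Hence $(C_*,D)$ decomposes, as a complex, into a direct sum of two-term pieces $0\to C_{n,i}\xrightarrow{\cong}C_{n-1,j}\to 0$ indexed by the arrows of $\mathcal M$, together with one-term pieces $C_{n,i}$ for $(n,i)$ critical; therefore $H_*(C_*,D)=\bigoplus_{(n,i)\in V_n^{\mathcal M}}C_{n,i}=C_n^{\mathcal M}$ with zero differential. There is then an obvious contraction of $(C_*,D)$ onto $(C_*^{\mathcal M},0)$: let $g_0$ be the inclusion of the critical part, $f_0$ the projection onto it, and $h_0\colon C_*\to C_{*+1}$ the map that is $0$ on the critical and ``down'' summands and equals $-D^{-1}\colon C_{n,i}\to C_{n+1,k}$ whenever $(n+1,k)\to(n,i)$ is an arrow of $\mathcal M$; the identities $f_0g_0=\mathrm{id}$, $g_0f_0-\mathrm{id}=Dh_0+h_0D$ and the side conditions $h_0^2=0=f_0h_0=h_0g_0$ are immediate from the two-term decomposition.

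Next I would treat $\delta$ as a perturbation of $D$ (note $(D+\delta)^2=\partial^2=0$) and apply the homological perturbation lemma. Its hypothesis is that $\sum_{k\ge 0}(-h_0\delta)^k$ be well defined summand by summand, and this is exactly what the \emph{Morse matching} condition provides: a nonzero contribution of $(h_0\delta)^k$ starting at $(n,i)$ is a length-$2k$ zigzag path out of $(n,i)$ in $G_{\mathcal M}(C_*)$ — alternating thick arrows (from $\delta$) and dotted arrows (from $h_0$) — so finiteness of all zigzag paths starting at $(n,i)$ makes the series a finite sum there. The lemma then delivers a contraction of $(C_*,\partial)$ onto $(C_*^{\mathcal M},\partial^{\mathcal M})$ with $f=f_0\sum_{k\ge0}(-\delta h_0)^k$, $g=\sum_{k\ge0}(-h_0\delta)^k g_0$, $\partial^{\mathcal M}=f_0\,\delta\,\big(\sum_{k\ge0}(-h_0\delta)^k\big)g_0$ and new homotopy $h=h_0\sum_{k\ge0}(-\delta h_0)^k$. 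Expanding, a length-$\ell$ monomial in any of these series is by construction a zigzag path of length $\ell$, and the signs match: the $-$ in $-\delta h_0$ and $-h_0\delta$ combines with $h_0=-D^{-1}$ to give exactly the weights $-\partial_{m,ij}^{-1}$ on dotted arrows and $\partial_{n,ji}$ on thick arrows of the definition. Hence $f$, $g$ and $\partial^{\mathcal M}$ coincide term by term with the maps built from $\Gamma((n,i),(m,j))=\sum_{p}w(p)$, and since a contraction is in particular a mutually homotopy-inverse pair of chain maps (here $fg=\mathrm{id}_{C^{\mathcal M}}$ and $gf$ is homotopic to $\mathrm{id}_{C}$ via $h$), this proves Theorem~\ref{morse}.

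I expect the main obstacle to be bookkeeping rather than conceptual: fixing sign conventions so that the perturbation series literally reproduces the prescribed arrow weights $-\partial_{m,ij}^{-1}$ and $\partial_{n,ji}$, and making the convergence argument precise — deducing from ``every zigzag path from $(n,i)$ has finite length'' that $(h_0\delta)^k$ vanishes on $C_{n,i}$ for $k\gg 0$ and that each $\Gamma((n,i),(m,j))$ is a finite sum. A more hands-on alternative avoiding the perturbation lemma is to cancel the arrows of $\mathcal M$ one at a time by Gaussian elimination (the case $|\mathcal M|=1$ being elementary) and to organize the possibly infinitely many cancellations as a colimit, the Morse condition again guaranteeing stabilization on each summand and that the resulting composites are the zigzag sums; but the homotopy-theoretic content is light once the matching condition has been used to produce $D^2=0$ and the local homotopy $h_0$.
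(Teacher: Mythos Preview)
The paper does not actually prove this theorem: it is stated in Section~2.3 as a preliminary result quoted from the literature (Kozlov, Sk\"oldberg, J\"ollenbeck--Welker, and especially \cite{CLZ}), with no proof supplied. So there is no ``paper's own proof'' to compare against.

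That said, your approach via the homological perturbation lemma is correct and is precisely the route taken in the reference \cite{CLZ} that the paper relies on (indeed the title of that reference is ``Algebraic Morse theory via homological perturbation lemma\dots''). The decomposition $\partial=D+\delta$, the identification of $(C_*,D)$ as a direct sum of contractible two-term complexes plus the critical summands, the explicit contraction $(f_0,g_0,h_0)$, and the identification of the perturbed maps $f,g,\partial^{\mathcal M}$ with the zigzag-path sums $\Gamma$ are all standard and correctly outlined. Your remark about the convergence subtlety is on point: from ``every zigzag path out of $(n,i)$ has finite length'' one does not immediately get a \emph{uniform} bound on the length, nor finiteness of $P((n,i),(m,j))$, so strictly speaking one must argue (as in \cite{CLZ}) that the Morse matching condition forces each $\Gamma((n,i),(m,j))$ to be a finite sum and each $(h_0\delta)^k|_{C_{n,i}}$ to vanish for $k$ large; this is the only place where care is needed, and you have correctly flagged it.
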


		\subsection{Two-sided Anick resolution}
		
		Starting from the reduced bar resolution of a one-vertex quiver algebra $A$ which is viewed as a chain complex of projective $A^e$-modules, Sk\"{o}ldberg \cite{ES} constructed a ``smaller'' $A^e$-projective resolution of $A$ using algebraic Morse theory, which is called the two-sided Anick resolution of $A$. It was pointed out in \cite{CLZ} that Sk{\"o}ldberg's construction generalizes to general quiver algebras.
		
		Let $A=kQ/I$ be a quiver algebra, let $\mathcal{G}$ be a reduced Gr{\"o}bner basis of the ideal $I$, and denote $W:=\mathrm{Tip}(\mathcal{G})$. Denote by $B(A)=(B_*(A),d_*)$ the reduced bar resolution of $A$ (c.f. Section 2.2). Similar as in \cite{CLZ}, we define a new quiver $Q_W=(V,E)$ with respect to $W$, which is called the Ufnarovski\u\i~graph (or just Uf-graph).
		
		\begin{definition} \label{Uf-graph}
			A Uf-graph $Q_W=(V,E)$ with respect to $W$ of the algebra $A=kQ/I$ is given by
			
			$$V:=Q_0\cup Q_1\cup \{u\in Q_{\geq 0}\ |\ \text{$u$ is a proper right factor of some $v\in W$}\}$$
			
			$$E:=\{e\rightarrow x\ |\ e\in Q_0,\;x=ex\in Q_1\}~~\cup$$$$\{u\rightarrow v\ |\ uv\in\langle \mathrm{Tip}(\mathcal{G})\rangle, \text{ but } w\notin\langle \mathrm{Tip}(\mathcal{G})\rangle\;for\;uv=wp,\;l(p)\geq 1\}$$
		\end{definition}
		
		Using the Uf-graph $Q_W$, one can define (for each $i\geq -1$) the $i$-chains, which form a subset of generators of $B_i(A)=\bigoplus A^e[w_1|\cdots|w_i]$ for $i\geq 0$, with $w_1,\cdots,w_i\in \mathrm{NonTip}(I)\backslash Q_0$, $w_1\cdots w_i\in Q_{\geq 0}$.
		
		\begin{definition} \label{i-chains}
			
			\begin{itemize}
				\item The set $W^{(i)}$ of $i$-chains consists of all sequences $[w_1|\cdots|w_{i+1}]$ with each $w_k\in \mathrm{NonTip}(I)\backslash Q_0$, such that
				$$e\rightarrow w_1\rightarrow w_2\rightarrow \cdots\rightarrow w_{i+1}$$
				is a path in $Q_W$. Define $W^{(-1)}:=Q_0$.
				
				\item For all $ p\in Q_{\geq 0}$, define $$V_{p,i}^{(n)}=\{[w_1|\cdots|w_n]\ |\ p=w_1\cdots w_n,\; [w_1|\cdots|w_{i+1}]\in W^{(i)},\;[w_1|\cdots|w_{i+2}]\notin W^{(i+1)}\}$$
			\end{itemize}
		\end{definition}

		By the definition above, we can define a partial matching $\mathcal{M}$ to be the set of arrows of the following form in the weighted quiver $G(B_*)$, where $B_*=B(A)$ is the reduced bar resolution of the algebra $A=kQ/I$:
		
		$$[w_1|\cdots|w_{i+1}|w_{i+2}'|w_{i+2}''|w_{i+3}|\cdots|w_n]\stackrel{(-1)^{i+2}}{\longrightarrow}[w_1|\cdots|w_{i+2}|\cdots|w_n]$$
		where $$w=w_1\cdots w_n=w_1\cdots w_{i+2}'w_{i+2}''\cdots w_n,~~w_{i+2}=w_{i+2}'w_{i+2}'',$$ 
		$$[w_1|\cdots|w_{i+1}|w_{i+2}'|w_{i+2}''|w_{i+3}|\cdots|w_n]\in V_{w,i+1}^{(n+1)},~~[w_1|\cdots|w_{i+2}|\cdots|w_n]\in V_{w,i}^{(n)}.$$
		
		\begin{theorem}$(\cite[~Theorem~4.3]{CLZ})$\label{morse matching}
			The partial matching $\mathcal{M}$ is a Morse matching of $G(B_*)$. In this case, the set of critical vertices in $n$-th component is given by $W^{(n-1)}$.
		\end{theorem}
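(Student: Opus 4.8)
The statement bundles four assertions: that $\mathcal{M}$ satisfies \emph{Invertibility}, that it satisfies \emph{Matching}, that it is a \emph{Morse} matching (every zigzag path in $G_{\mathcal{M}}(B_*)$ is finite), and that the critical vertices in homological degree $n$ are precisely the $n$-chains $W^{(n-1)}$ of Definition \ref{i-chains}. The plan is to base everything on one combinatorial feature of the Ufnarovski\u\i{} graph $Q_W$ of Definition \ref{Uf-graph}: to each generator $v=[u_1|\cdots|u_m]$ of $B_m(A)$ I attach its \emph{breakpoint} $\tau(v)\in\{-1,0,\dots,m-1\}$, the largest index such that $e\to u_1\to\cdots\to u_{\tau+1}$ is a path in $Q_W$, equivalently $[u_1|\cdots|u_{\tau+1}]\in W^{(\tau)}$ (with $\tau=-1$ recording the absence of the edge $e\to u_1$, i.e. $l(u_1)\geq 2$). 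When $\tau<m-1$ the edge $u_{\tau+1}\to u_{\tau+2}$ is missing from $Q_W$, and the behaviour of $v$ under $\mathcal{M}$ will be read off the word $u_{\tau+1}u_{\tau+2}$.

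\emph{Invertibility and Matching.} Each arrow of $\mathcal{M}$ is the single summand $(-1)^{i+2}[\cdots|w'_{i+2}w''_{i+2}|\cdots]$ of a bar differential $d_{n+1}$; as a map between the rank-one free $A^e$-modules generated by the source chain and the target chain it is multiplication by the unit $(-1)^{i+2}$, hence invertible. For Matching I would show that $\tau(v)$ pins down every $\mathcal{M}$-arrow incident to $v$: if $v$ sits in the source position the two split slots must be $u_{\tau+1}$ and $u_{\tau+2}$, if $v$ sits in the target position the slot that is split is $u_{\tau+2}$, and in the latter case the factorisation $u_{\tau+2}=w'w''$ is forced, because $w'$ has to be a (necessarily unique) factor of $u_{\tau+2}$ of positive length with $u_{\tau+1}\to w'$ an edge of $Q_W$ (two candidates would make the segment $u_{\tau+1}w'$ for the shorter one a proper initial segment of $u_{\tau+1}w'$ for the longer one, contradicting the minimality built into the edge relation). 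The same local analysis gives the \textbf{dichotomy} that drives the rest: since $u_{\tau+1},u_{\tau+2}\in\mathrm{NonTip}(I)$, either $u_{\tau+1}u_{\tau+2}\in\mathrm{NonTip}(I)$, in which case $v$ is a source (split $u_{\tau+1},u_{\tau+2}$, matched to the generator obtained by merging them) and cannot be a target; or $u_{\tau+1}u_{\tau+2}\notin\mathrm{NonTip}(I)$, in which case the shortest initial segment of $u_{\tau+1}u_{\tau+2}$ lying in $\langle\mathrm{Tip}(\mathcal{G})\rangle$ has the form $u_{\tau+1}w'$ with $u_{\tau+2}=w'w''$ and $w',w''$ of positive length, so $v$ is a target (split $u_{\tau+2}=w'w''$) and cannot be a source. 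Checking that every chain-membership condition occurring in the definition of $\mathcal{M}$ holds automatically in each branch is routine, using that an edge $u\to u'$ of $Q_W$ forces $uu'\in\langle\mathrm{Tip}(\mathcal{G})\rangle$; this together with the uniqueness above shows no vertex is the endpoint of two $\mathcal{M}$-arrows, so $\mathcal{M}$ is a partial matching.

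\emph{Morse matching.} This is the step I expect to be the main obstacle, because along a zigzag path the homological degree alternately increases by one (dotted arrows, reversing $\mathcal{M}$) and decreases by one (thick arrows, the remaining summands of $d$), so degree alone does not bound the length. The plan is to produce a well-founded invariant that strictly drops along every zigzag. A dotted arrow splits a single slot and hence preserves the underlying path $u_1\cdots u_m$; among the thick arrows the inner-multiplication summands $[\cdots|u_ju_{j+1}|\cdots]$ also preserve it, whereas the two outer summands $u_1[u_2|\cdots|u_m]$ and $[u_1|\cdots|u_{m-1}]u_m$ replace it by a proper factor and so strictly decrease $l(u_1\cdots u_m)$. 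Thus the underlying word-length is non-increasing along any zigzag, and within the finitely many generators sharing a fixed underlying word $w$ I would refine the invariant — for instance by the breakpoint together with a lexicographic comparison of the induced compositions of $w$ — and verify that each maximal dotted-then-thick segment that stays inside a fixed $w$ strictly lowers it. Since only finitely many generators have underlying word a factor of any given word, and the homological degree is bounded below by $-1$, this strict descent forces every zigzag to terminate; so $\mathcal{M}$ is a Morse matching and Theorem \ref{morse} applies.

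\emph{Critical vertices.} Let $v=[u_1|\cdots|u_n]$ be a generator of $B_n(A)$. If $v\in W^{(n-1)}$ then $\tau(v)=n-1$ and every consecutive pair $u_j\to u_{j+1}$ is an edge of $Q_W$; but each of the defining situations "$v$ is a source'' and "$v$ is a target'' requires $\tau(v)<n-1$, so $v$ lies in no arrow of $\mathcal{M}$ and is critical. Conversely, if $v\notin W^{(n-1)}$ then $\tau(v)<n-1$ and the dichotomy above puts $v$ in the source or the target position of an $\mathcal{M}$-arrow, so $v$ is not critical. Hence the critical vertices of homological degree $n$ are exactly $W^{(n-1)}$ (with the degenerate conventions $W^{(-1)}=Q_0$ in degree $0$, matching $B_0(A)=A\otimes_E A$), which is the last assertion and completes the plan.
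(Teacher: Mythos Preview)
The paper does not supply a proof of this statement: it is quoted from \cite[Theorem~4.3]{CLZ}, which in turn generalises Sk\"oldberg's one-vertex argument. Your breakpoint $\tau(v)$ is precisely the index $i$ for which $v\in V^{(n)}_{w,i}$, and organising the verification around it is exactly the standard approach; the Invertibility, Matching and Critical-vertex parts are correct as you outline, with one omission: your dichotomy only makes sense for $\tau\geq 0$, and the case $\tau=-1$ (that is, $l(u_1)\geq 2$) must be handled separately --- there $v$ is always a \emph{target}, matched to the generator obtained by splitting off the first arrow of $u_1$.

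There is, however, a genuine gap in the Morse-matching (finiteness) step. First, your assertion that the inner summands $[\cdots|u_ju_{j+1}|\cdots]$ \emph{preserve} the underlying word is not correct: when $u_ju_{j+1}\notin\mathrm{NonTip}(I)$ the product must be rewritten in $A$ as a linear combination of basis paths, each strictly smaller than $u_ju_{j+1}$ in the admissible order, so the corresponding thick arrows strictly \emph{decrease} the underlying word. (Your conclusion that the word length is non-increasing happens to survive, but for the wrong reason.) Second, and more seriously, the refinement you propose for generators sharing a fixed word $w$ is never verified --- and this is precisely where the content lies. One way to close the gap: take as invariant the pair $(\text{underlying word},\,l(u_{\tau+2}\cdots u_m))$, ordered lexicographically with the admissible well-order in the first coordinate. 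Dotted arrows preserve the word and strictly shorten the suffix (they split $u_{\tau+2}$ and advance the breakpoint by one); word-preserving thick arrows out of a source with breakpoint $\tau$ necessarily merge positions $j,j{+}1$ with $j\geq\tau+2$ (for $j\leq\tau$ the pair $u_ju_{j+1}$ lies in $\langle\mathrm{Tip}\mathcal{G}\rangle$ and triggers rewriting, while $j=\tau+1$ is the matching arrow), hence do not decrease the breakpoint and do not lengthen the suffix. Since the first coordinate lies in a well-order and the second is a nonnegative integer bounded by $l(w)$, no infinite zigzag can exist. Without carrying out this (or an equivalent) verification your argument does not establish that $\mathcal{M}$ is a Morse matching.
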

		Therefore, by Theorem \ref{morse matching} and Theorem \ref{morse}, $(B^{\mathcal{M}}(A), d^{\mathcal{M}})$ is also a $A^e$-projective resolution of $A$, but the projective $A^e$-modules in $B^{\mathcal{M}}(A)$ are usually much smaller than $B(A)$. The resolution $(B^{\mathcal{M}}(A), d^{\mathcal{M}})$ is called the two-sided Anick resolution of $A$ in \cite{ES} and in \cite{CLZ}.
		
		Note that for $n\geq 0$, the $n$-th component of $B^{\mathcal{M}}(A)$ is $A\otimes_E kW^{(n-1)}\otimes_EA$. In particular, we have the following identifications:
		$$W^{(-1)}=Q_0,~~ W^{(0)}=\{[w_1]\ |\ w_1\in Q_1\}\cong Q_1,$$
		$$W^{(1)}=\{[w_1|w_2]\ |\ \;w_1\in Q_1,\; w_1w_2\in \mathrm{Tip}(\mathcal{G})\}\cong \mathrm{Tip}(\mathcal{G})=W .$$

		\section{Generalized parallel paths method for computing the first Hochschild cohomology group}
		
		\subsection{Parallel paths method of Strametz}
		
		Let ${A}=kQ/I$ be a quiver algebra and $E\simeq kQ_0$ be its separable subalgebra as in Section 2.2. Recall that we assume $I\subseteq kQ_{\geq 2}$. The following notations (most of them are taken from \cite{Strametz}) will be useful.

		\begin{definition}
			\begin{itemize}
				\item Let $\varepsilon$ be a path in $Q$ and $(\alpha,\gamma)\in Q_1//Q_{\geq 0}$.  Denote by $\varepsilon^{(\alpha,\gamma)}$ the sum of all nonzero paths obtained by replacing one appearance of the arrow $\alpha$ in $\varepsilon$ by the path $\gamma$. If the path $\varepsilon$ does not contain the arrow $\alpha$, we set $\varepsilon^{(\alpha,\gamma)}=0$.
				\item If we fix a Gr{\"o}bner basis $\mathcal{G}$ of $I$ in $kQ$, then there is a $k$-linear basis $\mathcal{B}$ of the algebra ${A}$ with respect to $\mathcal{G}$. Indeed $\mathcal{B}$ is given by $\mathrm{NonTip}(I)$ modulo $I$ and there is a bijection between the elements of $\mathcal{B}$  and the elements of $\mathrm{NonTip}(I)$ (c.f. Section 2.1). For this reason, we often identify $\mathcal{B}$ with $\mathrm{NonTip}(I)$.
				
				\item Let $\pi:kQ\rightarrow {A}$ be the canonical projection. For a path $p\in Q$, $\pi(p)$ can be uniquely written as a linear combination of the elements in the basis $\mathcal{B}$.
				
				\item If $X$ is a set of paths of $Q$ and $e$ is a vertex of $Q$, the set $Xe$ is formed by the paths of $X$ with source vertex $e$. In the same way, $eX$ denotes the set of all paths of $X$ with terminus vertex $e$.
			\end{itemize}
		\end{definition}
		
		We now give a brief review about Strametz's method in \cite{Strametz} for computing the first Hochschild cohomology group of monomial algebras. Recall that an algebra ${A}=kQ/I$ is called a monomial algebra, if the ideal $I$ is generated by a set $Z$ of paths in $Q$. We shall assume that $Z$ is minimal, so that $Z$ is a reduced Gr\"{o}bner basis of $I$ with $Z=\mathrm{Tip}(Z)$. Note that $\mathrm{NonTip}(\langle Z\rangle)$ modulo $I$ gives a $k$-basis of $A$, which we denote by $\mathcal{B}$.
		
		\begin{proposition}\label{strametz} $(\cite[~Proposition~2.6]{Strametz})$
			Let $A$ be a finite dimensional monomial algebra. Then the beginning of the cochain complex of the minimal $A^e$-projective resolution of $A$ can be described by:
			
			\[\begin{tikzcd}
				0 & {k(Q_0//\mathcal{B})} & {k(Q_1//\mathcal{B})} & {k(Z//\mathcal{B})} & \cdots
				\arrow[from=1-1, to=1-2]
				\arrow["{\psi_0}", from=1-2, to=1-3]
				\arrow["{\psi_1}", from=1-3, to=1-4]
				\arrow[from=1-4, to=1-5]
			\end{tikzcd}\]
			where the differentials are given by
			$$	\begin{array}{*{5}{lllll}}
				\psi_0& :&k(Q_0//\mathcal{B}) &\rightarrow&k(Q_1//\mathcal{B}),\\
				&& (e,\gamma)&\mapsto&\sum_{\alpha\in Q_1e}(\alpha,\pi(\alpha\gamma))-\sum_{\beta\in eQ_1}(\beta,\pi(\gamma\beta));\\\\
				\psi_1&: &k(Q_1//\mathcal{B})&\rightarrow&k(Z//\mathcal{B}),\\
				&&(\alpha,\gamma)&\mapsto&\sum_{p\in Z}(p,\pi(p^{(\alpha,\gamma)})).
			\end{array}$$
			In particular, we have $\mathrm{HH}^0({A})\cong \mathrm{Ker}\psi_0$, $\mathrm{HH}^1({A})\cong \mathrm{Ker}\psi_1/\mathrm{Im}\psi_0$. 
		\end{proposition}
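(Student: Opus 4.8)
The plan is to derive this by dualizing the first three terms of Bardzell's minimal $A^e$-projective resolution $(P_\bullet,d_\bullet)$ of the monomial algebra $A=kQ/\langle Z\rangle$ \cite{Bardzell} and then identifying the resulting $\mathrm{Hom}$-spaces with spaces of parallel paths. Recall that $P_0=\bigoplus_{e\in Q_0}Ae\otimes_k eA$, $P_1=\bigoplus_{\alpha\in Q_1}At(\alpha)\otimes_k s(\alpha)A$, $P_2=\bigoplus_{p\in Z}At(p)\otimes_k s(p)A$, with $P_0\to A$ the multiplication; equivalently $P_n\cong A\otimes_E kW^{(n-1)}\otimes_E A$ where $W^{(-1)}=Q_0$, $W^{(0)}\cong Q_1$, $W^{(1)}\cong Z$, exactly as at the end of Section~2.5 (for a monomial algebra the reduced Gr\"obner basis $\mathcal{G}$ has $\mathrm{Tip}(\mathcal{G})=Z$ and the two-sided Anick resolution coincides with Bardzell's resolution). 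On the $\alpha$-summand, $d_1(t(\alpha)\otimes s(\alpha))=\alpha\otimes s(\alpha)-t(\alpha)\otimes\alpha$, landing in the $s(\alpha)$- and $t(\alpha)$-summands of $P_0$; on the summand of $P_2$ indexed by a relation $p=\alpha_m\cdots\alpha_1\in Z$,
\[
d_2(t(p)\otimes s(p))=\sum_{i=1}^{m}\alpha_m\cdots\alpha_{i+1}\otimes\alpha_{i-1}\cdots\alpha_1,
\]
the $i$-th term being placed in the $\alpha_i$-component of $P_1$.

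Next I would apply $\mathrm{Hom}_{A^e}(-,A)$ and record the identifications. By the tensor--hom adjunction $\mathrm{Hom}_{A^e}(A\otimes_E kX\otimes_E A,\,A)\cong\mathrm{Hom}_{E^e}(kX,A)$, and an $E^e$-linear map sends a path $w\in X$ with $s(w)=e$, $t(w)=e'$ into $e'Ae$, whose $k$-basis is the set of elements of $\mathcal{B}$ parallel to $w$. Hence $\mathrm{Hom}_{A^e}(P_n,A)\cong k(W^{(n-1)}//\mathcal{B})$, which is $k(Q_0//\mathcal{B})$, $k(Q_1//\mathcal{B})$, $k(Z//\mathcal{B})$ for $n=0,1,2$; concretely $(e,\gamma)$ is the cochain $e\otimes e\mapsto\gamma$, $e'\otimes e'\mapsto 0$ ($e'\neq e$), and similarly $(\alpha,\gamma)$ in degree $1$.

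Then I would transpose $d_1$ and $d_2$. For a $0$-cochain $f=\sum_e(e,\gamma_e)$, evaluating $\psi_0f=f\circ d_1$ on the $\alpha$-component of $P_1$ gives $\alpha\gamma_{s(\alpha)}-\gamma_{t(\alpha)}\alpha\in t(\alpha)As(\alpha)$; taking $f=(e,\gamma)$ and sorting by the arrow index recovers $\psi_0(e,\gamma)=\sum_{\alpha\in Q_1e}(\alpha,\pi(\alpha\gamma))-\sum_{\beta\in eQ_1}(\beta,\pi(\gamma\beta))$, with $\pi$ present because $\alpha\gamma$ and $\gamma\beta$ are a priori only paths in $kQ$ (for a monomial algebra $\pi$ just annihilates those passing through $Z$). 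For a $1$-cochain $(\alpha,\gamma)$ and a relation $p=\alpha_m\cdots\alpha_1\in Z$, evaluating $(\alpha,\gamma)\circ d_2$ on the $p$-component yields $\sum_{i\,:\,\alpha_i=\alpha}\alpha_m\cdots\alpha_{i+1}\,\gamma\,\alpha_{i-1}\cdots\alpha_1=p^{(\alpha,\gamma)}$, so $\psi_1(\alpha,\gamma)=\sum_{p\in Z}(p,\pi(p^{(\alpha,\gamma)}))$. Finally, since $(P_\bullet,d_\bullet)$ is an $A^e$-projective resolution of $A$, $\mathrm{HH}^n(A)$ is the $n$-th cohomology of $\mathrm{Hom}_{A^e}(P_\bullet,A)$, giving $\mathrm{HH}^0(A)\cong\mathrm{Ker}\,\psi_0$ and $\mathrm{HH}^1(A)\cong\mathrm{Ker}\,\psi_1/\mathrm{Im}\,\psi_0$.

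The main obstacle is the bookkeeping in the transposition step: correctly recalling Bardzell's differentials --- especially the placement of each term of $d_2$ in the right arrow-indexed summand of $P_1$, and the sign in $d_1$ --- and then verifying that the transpose of $d_2$ really produces the ``replace one occurrence of $\alpha$ by $\gamma$'' operation $p\mapsto p^{(\alpha,\gamma)}$, with the correct multiplicity when $\alpha$ occurs repeatedly in $p$. A secondary point is to state the identification $\mathrm{Hom}_{E^e}(kX,A)\cong k(X//\mathcal{B})$ naturally enough that all three identifications are simultaneously compatible with $\psi_0$ and $\psi_1$.
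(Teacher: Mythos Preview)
Your proposal is correct and follows exactly the approach the paper indicates: the paper does not give a self-contained proof but simply states that Proposition~\ref{strametz} is proved using Bardzell's minimal $A^e$-projective resolution together with the two identification lemmas (Lemmas~\ref{identification-1} and~\ref{identification-2}), which are precisely the tensor--hom adjunction and the isomorphism $\mathrm{Hom}_{E^e}(kX,A)\cong k(X//\mathcal{B})$ you invoke. Your transposition computations for $d_1$ and $d_2$ are the standard ones and match the formulas for $\psi_0$ and $\psi_1$; your remark that for a monomial algebra the two-sided Anick resolution coincides with Bardzell's (since $\mathrm{Tip}(\mathcal{G})=Z$) is also consistent with the paper's later generalization in Proposition~\ref{gen-parallel paths}.
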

		
		The proof of Proposition \ref{strametz} uses the minimal $A^e$-projective resolution of the monomial algebra $A$ given by Bardzell \cite{Bardzell} and the following two general lemmas.
		
		\begin{lemma} \label{identification-1} $(\cite[~Lemma~2.2]{Strametz})$
			Let ${A}=kQ/I$ be a quiver algebra, $E\simeq kQ_0$ be the separable subalgebra of $A$, $M$ be an $E$-bimodule and $T$ be a ${A}$-bimodule. Then $\mathrm{Hom}_{{A}^e}({A}\otimes_E M\otimes_E{A}, T)$ and $\mathrm{Hom}_{E^e}(M,T)$ are isomorphic as vector spaces.
		\end{lemma}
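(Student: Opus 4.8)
The plan is to realize this isomorphism as the extension--restriction adjunction for the ring homomorphism $E^e := E\otimes_k E^{\mathrm{op}}\to A^e = A\otimes_k A^{\mathrm{op}}$ induced by the inclusion $E\hookrightarrow A$. First I would recall that a left $E^e$-module is precisely an $E$-bimodule, and likewise a left $A^e$-module is an $A$-bimodule, and then check that there is a natural isomorphism of left $A^e$-modules
\[
A\otimes_E M\otimes_E A \;\xrightarrow{\ \sim\ }\; A^e\otimes_{E^e} M, \qquad a\otimes m\otimes b\ \longmapsto\ (a\otimes b)\otimes m .
\]
This is just unravelling definitions: $A^e\otimes_{E^e}M = (A\otimes_k A^{\mathrm{op}})\otimes_{E\otimes_k E^{\mathrm{op}}} M$, and one verifies directly that the displayed map respects the $E^e$-balancing (tensoring an $E$-bimodule over $E$ by $A$ on each side is exactly extension of scalars along $E^e\to A^e$), is $A^e$-linear for the left and op-twisted right $A$-actions carried by the first tensor factor, and has inverse $(a\otimes b)\otimes m\mapsto a\otimes m\otimes b$.

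Granting this identification, the lemma follows from the standard tensor--hom adjunction between extension of scalars and restriction of scalars along $E^e\to A^e$: for any $A$-bimodule $T$,
\[
\mathrm{Hom}_{A^e}\!\big(A^e\otimes_{E^e} M,\ T\big)\;\cong\;\mathrm{Hom}_{E^e}\!\big(M,\ \mathrm{Res}^{A^e}_{E^e}T\big)\;=\;\mathrm{Hom}_{E^e}(M,T),
\]
where the last equality holds because restricting $T$ along $E^e\to A^e$ is simply regarding $T$ as an $E$-bimodule. Composing with the isomorphism of the previous paragraph, the resulting bijection sends $\varphi\in\mathrm{Hom}_{A^e}(A\otimes_E M\otimes_E A,T)$ to the $E$-bilinear map $m\mapsto\varphi(1\otimes m\otimes 1)$, and conversely sends an $E$-bilinear map $f\colon M\to T$ to $a\otimes m\otimes b\mapsto a\,f(m)\,b$.

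The only point that genuinely needs verification is that the assignment $f\mapsto\big(a\otimes m\otimes b\mapsto a f(m) b\big)$ is well defined on the balanced tensor product $A\otimes_E M\otimes_E A$, and this is exactly where $E^e$-linearity of $f$, i.e. $E$-bilinearity, is used: the elements $ae\otimes m\otimes b$ and $a\otimes em\otimes b$ must have equal images, namely $a e f(m) b = a f(em) b$, and symmetrically in the right-hand slot. Once this is settled, checking that the resulting map is $A^e$-linear, that the two constructions are mutually inverse, and that the bijection is natural in $T$ is routine, so I do not anticipate any real obstacle. I would also remark that separability of $E$ plays no role here — only the fact that $E\to A$ is a ring homomorphism — separability being needed elsewhere (to split $A = E\oplus A_+$ over $E^e$ and to ensure the induced modules are projective).
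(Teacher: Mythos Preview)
Your argument is correct and arrives at exactly the same explicit bijections the paper writes down: $\varphi\mapsto\big(m\mapsto\varphi(1\otimes m\otimes 1)\big)$ and $f\mapsto\big(1\otimes m\otimes 1\mapsto f(m)\big)$. The only difference is packaging: you frame the result as the extension--restriction adjunction along $E^e\to A^e$ (together with the identification $A\otimes_E M\otimes_E A\cong A^e\otimes_{E^e}M$), whereas the paper simply displays the two maps and asserts they are well defined and mutually inverse --- so this is essentially the same proof with a more conceptual wrapper.
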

		
		\begin{proof}
			It is easy to check that the $k$-linear maps given by 	
			$$f\mapsto{(m\mapsto f(1\otimes m\otimes1))}$$ and
			$$g\mapsto{(1\otimes m\otimes1\mapsto g(m))}$$ are well-defined and inverse to each other.		
		\end{proof}

		\begin{lemma} \label{identification-2} $(\cite[~Lemma~2.3]{Strametz})$\label{Lem2}
			Let ${A}=kQ/I$ be a quiver algebra and $E\simeq kQ_0$ be its separable subalgebra. Let $X$ and $Y$ be the  sets of paths of $Q$ and let $kX$ and $kY$ be the corresponding $E$-bimodules. If $X$ is a finite set, then the vector spaces $k(X//Y)$ and $\mathrm{Hom}_{E^e}(kX,kY)$ are isomorphic.
		\end{lemma}
		
		\begin{proof}
			It is easy to check that the $k$-linear maps given by 	
			$$(x,y)\mapsto{(x\mapsto y,\; x'\mapsto 0,\text{ for }x'\neq x)}$$ and
			$$f\mapsto\sum_{x\in X}\sum_{i}\lambda_i(x,p_i)$$ with $f(x)=\sum_{i}\lambda_ip_i$, $\lambda_i\in k$, $p_i\in Y$, are well-defined and inverse to each other.	
		\end{proof}
		
		\begin{remark}
			Lemma \ref{Lem2} is the intrinsic reason why the parallel paths method can not always be used for the quiver algebras with infinite dimension. Indeed, we need to restrict $X$ to be finite to make the above maps well-defined.
			In particular, for $X=\mathrm{Tip}\mathcal{G}$, in order to use above lemma, we need $X$ to be a finite set. When $kQ/I$ is finite dimensional, $\mathrm{Tip}\mathcal{G}$ is always finite by \cite[~Proposition~2.10]{Green}. When $kQ/I$ is infinite dimensional but $I$ has a finite Gr\"{o}bner basis in $kQ$, we can still use Lemma \ref{Lem2} for $X=\mathrm{Tip}\mathcal{G}$. For example, we can use Lemma \ref{Lem2} to $X=\mathrm{Tip}\mathcal{G}$ for the algebra $k\langle x,y\rangle/\langle x^2\rangle$, but not for the algebra $k\langle x,y\rangle/\langle xyx,xyyx,xyyyx,\cdots\rangle$. 		
		\end{remark}

		\subsection{Generalized parallel paths method} 
		
		In this subsection, we will extend the parallel paths method for computing the first Hochschild cohomology group from monomial algebras to general quiver algebras. We will refer to this method as the generalized parallel paths method.
		
		Let ${A}=kQ/I$ be a quiver algebra such that $I$ has a finite reduced Gr{\"o}bner basis $\mathcal{G}$. 	
		The following lemma can be seen as a generalization of the beginning of the two-sided minimal projective resolution of a monomial algebra given by Bardzell \cite{Bardzell}. Our proof is a careful analysis of the beginning of the two-sided Anick resolution (c.f. Section 2.4) of $A$. It is worth noting that Bardzell has proved the same result in the case of finite dimensional algebras in \cite{Bardzell}. Our proof uses algebraic Morse theory without the restriction that $A$ is finite dimensional.
		
		\begin{lemma} $(\cite[~Proposition~2.1]{Bardzell})$
			The beginning of the two-sided Anick resolution $(B^{\mathcal{M}}({A}),d^{\mathcal{M}})$ of ${A}$ can be described by (for simplicity we just denote $d^{\mathcal{M}}$ by $d$):
			\[\begin{tikzcd}
				\cdots & {{A}\otimes_E k\mathrm{Tip}(\mathcal{G})\otimes_E {A}} & {{A}\otimes_E kQ_1\otimes_E {A}} & {{A}\otimes_E kQ_0\otimes_E {A}} & {A} & 0,
				\arrow["{d_1}", from=1-3, to=1-4]
				\arrow["{d_0}", from=1-4, to=1-5]
				\arrow["{d_2}", from=1-2, to=1-3]
				\arrow[from=1-5, to=1-6]
				\arrow[from=1-1, to=1-2]
			\end{tikzcd}\]
			where the differentials can be described as follows: 
			\begin{itemize}
				\item for all $ 1\otimes e_i\otimes 1\in{A}\otimes_E kQ_0\otimes_E{A}$, $d_0(1\otimes e_i\otimes 1)=e_i$;
				\item for all $ 1\otimes\alpha\otimes 1\in{A}\otimes_E kQ_1\otimes_E{A}$, $d_1(1\otimes\alpha\otimes 1)=\alpha\otimes s(\alpha)\otimes1-1\otimes t(\alpha)\otimes\alpha$;
				\item  for all $ 1\otimes w\otimes 1\in{A}\otimes_E k\mathrm{Tip}(\mathcal{G})\otimes_E{A}$, $$d_2(1\otimes w\otimes1)=\sum_{\alpha_{m}\cdots\alpha_{1}\in \mathrm{Supp}(g)}\sum_{i=1}^{m}c(\alpha_{m}\cdots\alpha_{1})\alpha_{m}\cdots\alpha_{i+1}\otimes\alpha_{i}\otimes \alpha_{i-1}\cdots\alpha_{1},$$
				where $ g\in\mathcal{G}$ such that $g=w+\sum_{p\in Q_{\geq 0};\ p\neq w}c(p)p$ with $\mathrm{Tip}(g)=w$ and $0\neq c(p)\in k$. (Note that $c(w)=1$ and $l(w)\geq 2$.)
			\end{itemize}
		\end{lemma}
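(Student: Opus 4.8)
The plan is to read off the first three differentials of the two-sided Anick resolution $(B^{\mathcal{M}}(A),d^{\mathcal{M}})$ directly from its construction as the Morse complex of the reduced bar resolution $(B(A),d)$ with respect to the Morse matching $\mathcal{M}$ of Theorem~\ref{morse matching}. I would first recall the module identifications from Section~2.4: the $n$-th term of $B^{\mathcal{M}}(A)$ is $A\otimes_E kW^{(n-1)}\otimes_E A$, with $W^{(-1)}=Q_0$, $W^{(0)}\cong Q_1$ and $W^{(1)}\cong\mathrm{Tip}(\mathcal{G})$. For $d_0$ and $d_1$ the point is that all arrows of $\mathcal{M}$ go from homological degree $n+1$ to degree $n$ with $n\geq1$ (the lowest ones from degree $2$ to degree $1$), so every degree-$0$ vertex is untouched by $\mathcal{M}$ — hence critical and not the source of any dotted arrow — and every critical degree-$1$ vertex $[\alpha]$ with $\alpha\in Q_1$ is also unmatched. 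Therefore any zigzag path issuing from a critical vertex of degree $\leq1$ has length $1$, the sum $\Gamma$ collapses to a single component of $d$, and under the canonical identification $A\otimes_E A\cong A\otimes_E Q_0\otimes_E A$ the bar differentials $d_0(e\otimes e)=\mu_A(e\otimes e)=e$ and $d_1([\alpha])=\alpha\otimes s(\alpha)-t(\alpha)\otimes\alpha$ give the asserted formulas for $d_0$ and $d_1$.

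The substance of the lemma is $d_2$, for which I would first compute the degree-$1$ part of the Morse chain map $f\colon B(A)\to B^{\mathcal{M}}(A)$ of Theorem~\ref{morse}. For a path $p=\alpha_l\cdots\alpha_1$ of length $l\geq1$ put
\[
\Phi(p):=\sum_{i=1}^{l}\alpha_l\cdots\alpha_{i+1}\otimes\alpha_i\otimes\alpha_{i-1}\cdots\alpha_1\ \in\ A\otimes_E kQ_1\otimes_E A
\]
(an empty product of arrows meaning the relevant vertex). I claim $f_1([p])=\Phi(p)$ for every such $p$, and I would prove it by induction on $l$. If $l=1$ the bar generator $[p]$ is critical, so $f_1([p])=[p]=\Phi(p)$. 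If $l\geq2$ then $[p]$ is matched with the $1$-chain $[\alpha_l|\alpha_{l-1}\cdots\alpha_1]\in W^{(1)}$, so any zigzag path out of $[p]$ reaching a critical degree-$1$ vertex must start with the dotted arrow $[p]\dashrightarrow[\alpha_l|\alpha_{l-1}\cdots\alpha_1]$, whose weight is $+\mathrm{id}$ — the matching arrow carries weight $-\mathrm{id}$, being the coefficient of the middle term $-[\alpha_l\cdot(\alpha_{l-1}\cdots\alpha_1)]=-[p]$ in $d_2([\alpha_l|\alpha_{l-1}\cdots\alpha_1])$. The next, thick, step is one of the two remaining components of that same differential: the right-multiplication term, landing on the critical vertex $[\alpha_l]$ and contributing $1\otimes\alpha_l\otimes\alpha_{l-1}\cdots\alpha_1$, or the left-multiplication term $\alpha_l\cdot[\alpha_{l-1}\cdots\alpha_1]$, whose further zigzag expansion is $\alpha_l\cdot\Phi(\alpha_{l-1}\cdots\alpha_1)$ by the induction hypothesis. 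Adding the two gives $\Phi(p)$.

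Finally I would compute $d_2(1\otimes w\otimes1)$, where $w=\mathrm{Tip}(g)=\alpha_m\cdots\alpha_1$, so the corresponding generator of $B^{\mathcal{M}}_2(A)$ is $[w_1|w_2]$ with $w_1=\alpha_m\in Q_1$ and $w_2=\alpha_{m-1}\cdots\alpha_1$. The crucial point is that $[w_1|w_2]$ is critical and is \emph{not} the source of a matching arrow — the only candidate would merge $w_1$ and $w_2$, but this is excluded precisely because $w_1w_2=w\in\mathrm{Tip}(\mathcal{G})$, so that $[w_1|w_2]\notin V^{(2)}_{w,0}$. Hence every zigzag path out of $[w_1|w_2]$ begins with a thick arrow, i.e.\ with a component of $d_2([w_1|w_2])=w_1\cdot[w_2]-[w_1w_2]+[w_1]\cdot w_2$, in which the middle term $[w_1w_2]=[\pi(w)]$ is rewritten as $-\sum_{q\in\mathrm{Supp}(g),\,q\neq w}c(q)[q]$ using $g=w+\sum_{q\neq w}c(q)q\in I$ and $q\in\mathrm{NonTip}(I)$ for $q\neq w$. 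Since each such zigzag path factors as this first thick arrow to a degree-$1$ vertex $[u]$ followed by a zigzag path onward from $[u]$, and summing the onward parts over all zigzags reproduces exactly $f_1([u])$, one obtains
\[
d_2(1\otimes w\otimes1)=f_1\bigl(d_2([w_1|w_2])\bigr)=w_1\cdot\Phi(w_2)+\sum_{q\in\mathrm{Supp}(g),\,q\neq w}c(q)\,\Phi(q)+\Phi(w_1)\cdot w_2 .
\]
Here $w_1\cdot\Phi(w_2)$ yields the summands of $\Phi(w)$ with $i<m$ and $\Phi(w_1)\cdot w_2=1\otimes\alpha_m\otimes\alpha_{m-1}\cdots\alpha_1$ yields the one with $i=m$, so together they equal $\Phi(w)=c(w)\Phi(w)$ as $c(w)=1$; therefore $d_2(1\otimes w\otimes1)=\sum_{q\in\mathrm{Supp}(g)}c(q)\Phi(q)$, which is exactly the claimed expression.

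The step I expect to need the most care is the auxiliary identity $f_1([p])=\Phi(p)$ — in particular checking that the reversal sign $-(-1)^{i+2}$ of a matching arrow (here $i=-1$) cancels the sign of the merge term of $d$, so that all weights entering $\Gamma$ are, up to module actions, the identity — together with the justification that zigzag paths out of $[w_1|w_2]$ really factor through degree-$1$ vertices as described, which hinges on $[w_1|w_2]$ having no outgoing dotted arrow, i.e.\ on $w$ being a tip. The rest is a routine unwinding of the definitions of $G_{\mathcal{M}}(B_*)$, the path weights, and the maps $f,g,\partial^{\mathcal{M}}$ of Theorem~\ref{morse}.
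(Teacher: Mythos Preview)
Your proof is correct and follows essentially the same route as the paper: both compute $d^{\mathcal{M}}_0,d^{\mathcal{M}}_1,d^{\mathcal{M}}_2$ by tracing the zigzag paths in $G_{\mathcal M}(B_\ast)$, using that no matching arrows touch degree~$0$ and that the critical $[w_1|w_2]$ has no outgoing dotted arrow. The only organisational difference is that you first isolate the identity $f_1([p])=\Phi(p)$ by induction on $l(p)$ and then read off $d^{\mathcal M}_2=f_1\circ d_2$ on critical generators, whereas the paper writes out the two families of zigzag paths (one for the summand $w$ of $g$, one for each $q\in\mathrm{Supp}(g)\setminus\{w\}$) explicitly and records the resulting coefficient of each $[\alpha_k]$; your $\Phi$ is exactly the map the paper later calls $w_1$ in the proof of Theorem~\ref{gen-lie bracket}. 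One wording point: when you justify that zigzag paths out of $[w_1|w_2]$ start with a thick arrow, the relevant fact is that $[w_1|w_2]$ is not the \emph{target} of a matching arrow (hence has no outgoing dotted arrow); your argument that it is not the \emph{source} is superfluous once you have said it is critical.
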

		
		\begin{proof}
			By the identifications at the end of Section 2.4, we just need to check the differentials in this lemma. Obviously, $d_0=\mu_A$ which comes from the reduced bar resolution of $A$, is given by the multiplication of $A$. By the definition of the Morse matching $\mathcal{M}$ in the weighted quiver $G(B_*)$ (c.f. Section 2.4), there are no dotted arrows starting from $W^{(-1)}=Q_0$ in the new weighted quiver $G_{\mathcal{M}}(B_*)$, where $B_*:=B(A)$ is the reduced bar resolution of $A$. Thus for $[\alpha]\in W^{(0)}$ with $\alpha\in Q_1$, the zigzag paths from $W^{(0)}=Q_1$ to $W^{(-1)}=Q_0$ can be given by
			$$
			\begin{tikzcd}
				&& \lbrack\alpha\rbrack \\
				\lbrack s(\alpha)\rbrack &&&& \lbrack t(\alpha)\rbrack
				\arrow["(\alpha\otimes 1)"', from=1-3, to=2-1]
				\arrow["(-1)\cdot(1\otimes\alpha^{op})", from=1-3, to=2-5]
			\end{tikzcd}$$
			
			For $ 1\otimes w\otimes 1\in{A}\otimes_{E} k\mathrm{Tip}(\mathcal{G})\otimes_{E}{A}$, let $w=w_1w_2$ with $w_1\in Q_1$. We are going to find all zigzag paths from $[w_1|w_2]$ to some $[\alpha]$ in $G_{\mathcal{M}}(B_*)$ with $\alpha\in Q_1$. First of all, in the original reduced bar resolution $B_*$, the differential of $[w_1|w_2]$ is $$w_1\lbrack w_2\rbrack+\sum_{p\in \mathrm{Supp}(g);\ p\neq w}c(p)\lbrack p\rbrack+\lbrack w_1\rbrack w_2,$$
			where $w=w_1w_2=-\sum_{p\in \mathrm{Supp}(g);\ p\neq w}c(p)p$ in $A$ (modulo $I$). For some $\alpha_{m}\cdots\alpha_{1}\in \mathrm{Supp}(g)$, there are two cases to be considered. 
			
			\textit{Case 1}. If $\alpha_{m}\cdots\alpha_{1}=w$, the only zigzag path from $[w_1|w_2]$ to $[\alpha_k]$ ($1\leq k\leq m$) is given by
			$$\begin{tikzcd}
				\lbrack\alpha_{m}|\alpha_{m-1}\cdots\alpha_{1}\rbrack & \lbrack\alpha_{m-1}|\alpha_{m-2}\cdots\alpha_{1}\rbrack & \lbrack\alpha_{k}|\alpha_{k-1}\cdots\alpha_{1}\rbrack \\
				\lbrack\alpha_{m-1}\cdots\alpha_{1}\rbrack & \cdots & \lbrack\alpha_k\rbrack
				\arrow["(\alpha_{m}\otimes 1)"', from=1-1, to=2-1]
				\arrow["1", dashed, from=2-1, to=1-2]
				\arrow[from=1-2, to=2-2]
				\arrow[dashed, from=2-2, to=1-3]
				\arrow["(-1)^2\cdot(1\otimes(\alpha_{k-1}\cdots\alpha_{1})^{op})", from=1-3, to=2-3]
			\end{tikzcd}$$
			Thus the coefficient of $[\alpha_{k}]$ is $(\alpha_{m}\cdots\alpha_{k+1})\otimes(\alpha_{k-1}\cdots\alpha_{1})^{op}$.
			
			\textit{Case 2}. If $\alpha_{m}\cdots\alpha_{1}=p$ with $p\in \mathrm{Supp}(g)$ and $p\neq w$, the only zigzag path from $[w_1|w_2]$ to $[\alpha_k]$ ($1\leq k\leq m$) is given by
			$${\footnotesize\begin{tikzcd}
					\lbrack w_1|w_2\rbrack & \lbrack\alpha_{m}|\alpha_{m-1}\cdots\alpha_{1}\rbrack & \lbrack\alpha_{m-1}|\alpha_{m-2}\cdots\alpha_{1}\rbrack & \lbrack\alpha_{k}|\alpha_{k-1}\cdots\alpha_{1}\rbrack \\
					\lbrack\alpha_{m}\cdots\alpha_{1}\rbrack & \lbrack\alpha_{m-1}\cdots\alpha_{1}\rbrack & \cdots & \lbrack\alpha_k\rbrack
					\arrow["(-1)\cdot(-c(\alpha_{m}\cdots\alpha_{1}))"', from=1-1, to=2-1]
					\arrow["1", dashed, from=2-1, to=1-2]
					\arrow["(\alpha_{m}\otimes 1)"', from=1-2, to=2-2]
					\arrow["1", dashed, from=2-2, to=1-3]
					\arrow[from=1-3, to=2-3]
					\arrow[dashed, from=2-3, to=1-4]
					\arrow["(1\otimes(\alpha_{k-1}\cdots\alpha_{1})^{op})", from=1-4, to=2-4]
			\end{tikzcd}}$$
			Thus the coefficient of $[\alpha_{k}]$ is $c(\alpha_{m}\cdots\alpha_{1})(\alpha_{m}\cdots\alpha_{k+1})\otimes(\alpha_{k-1}\cdots\alpha_{1})^{op}$.
			\end{proof}
		
		Applying the functor $\mathrm{Hom}_{{A}^e}(-,{A})$ to $B^{\mathcal{M}}({A})$ and using the identification in Lemma \ref{identification-1} yields the following cochain complex which we denote by $C_\mathcal{M}({A})=(C_\mathcal{M}^*,\delta^*)$:
		\[\begin{tikzcd}
			0 & {\mathrm{Hom}_{E^e}(kQ_0,{A})} & {\mathrm{Hom}_{E^e}(kQ_1,{A})} & {\mathrm{Hom}_{E^e}(k\mathrm{Tip}(\mathcal{G}),{A})} & \cdots
			\arrow[from=1-1, to=1-2]
			\arrow["{\delta^0}", from=1-2, to=1-3]
			\arrow["{\delta^1}", from=1-3, to=1-4]
			\arrow[from=1-4, to=1-5]
		\end{tikzcd}\]
		where the coboundaries $\delta^0$ and $\delta^1$ are given by
		$$\delta^0(f)(\alpha)=\alpha\cdot f(s(\alpha))-f(t(\alpha))\cdot\alpha$$
		$$\delta^1(g)(w)=\sum_{\alpha_{m}\cdots\alpha_{1}\in \mathrm{Supp}(\mathrm{Tip^{-1}}(w))}\sum_{i=1}^{m}c(\alpha_{m}\cdots\alpha_{1})\cdot\alpha_{m}\cdots\alpha_{i+1}g(\alpha_{i})\alpha_{i-1}\cdots\alpha_{1}$$
		where $f\in \mathrm{Hom}_{E^e}(kQ_0,{A})$, $\alpha\in Q_1$, $g\in \mathrm{Hom}_{E^e}(kQ_1,{A})$, $w\in \mathrm{Tip}(\mathcal{G})$.
		
		If we carry out the identification suggested in Lemma \ref{identification-2}, we can rewrite the coboundaries and obtain:
		
		\begin{proposition}\label{gen-parallel paths}
			The beginning of the cochain complex $C_{\mathcal{M}}({A})$ can be characterized in the following way
			\[\begin{tikzcd}
				0 & {k(Q_0//\mathcal{B})} & {k(Q_1//\mathcal{B})} & {k(\mathrm{Tip}(\mathcal{G})//\mathcal{B})} & \cdots
				\arrow[from=1-1, to=1-2]
				\arrow["{\psi_0}", from=1-2, to=1-3]
				\arrow["{\psi_1}", from=1-3, to=1-4]
				\arrow[from=1-4, to=1-5]
			\end{tikzcd}\]
			where the maps are given by
			$$	\begin{array}{*{5}{lllll}}
				\psi_0& :&k(Q_0//\mathcal{B}) &\rightarrow&k(Q_1//\mathcal{B}),\\
				&& (e,\gamma)&\mapsto&\sum_{\alpha\in Q_1e}(\alpha,\pi(\alpha\gamma))-\sum_{\beta\in eQ_1}(\beta,\pi(\gamma\beta));\\\\
				\psi_1&: &k(Q_1//\mathcal{B})&\rightarrow&k(\mathrm{Tip}(\mathcal{G})//\mathcal{B}),\\
				&&(\alpha,\gamma)&\mapsto&\sum_{g\in\mathcal{G}}\sum_{p\in \mathrm{Supp}(g)}c_g(p)\cdot(\mathrm{Tip}(g),\pi(p^{(\alpha,\gamma)})).
			\end{array}$$
			with $g=\sum_{p\in \mathrm{Supp}(g)}c_g(p)p$, $c_g(p)\in k$.
			
			In particular, we have $\mathrm{HH}^0({A})\cong \mathrm{Ker}\psi_0$, $\mathrm{HH}^1({A})\cong \mathrm{Ker}\psi_1/\mathrm{Im}\psi_0$. 
		\end{proposition}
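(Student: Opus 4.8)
The plan is to derive Proposition \ref{gen-parallel paths} from the preceding lemma describing the beginning of the two-sided Anick resolution, by tracking the two identifications carefully. The first step is to apply $\mathrm{Hom}_{A^e}(-,A)$ to the complex
\[\cdots \to A\otimes_E \mathrm{Tip}(\mathcal{G})\otimes_E A \xrightarrow{d_2} A\otimes_E Q_1\otimes_E A \xrightarrow{d_1} A\otimes_E Q_0\otimes_E A \to A \to 0,\]
and use Lemma \ref{identification-1} to identify $\mathrm{Hom}_{A^e}(A\otimes_E M\otimes_E A, A)$ with $\mathrm{Hom}_{E^e}(M,A)$ for $M=kQ_0,\,kQ_1,\,k\mathrm{Tip}(\mathcal{G})$. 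Under this identification, the dual of $d_1$ sends $f\in\mathrm{Hom}_{E^e}(kQ_0,A)$ to the map $\alpha\mapsto \alpha f(s(\alpha))-f(t(\alpha))\alpha$, and the dual of $d_2$ sends $g\in\mathrm{Hom}_{E^e}(kQ_1,A)$ to the map $w\mapsto \sum_{\alpha_m\cdots\alpha_1\in\mathrm{Supp}(\mathrm{Tip}^{-1}(w))}\sum_{i=1}^{m} c(\alpha_m\cdots\alpha_1)\,\alpha_m\cdots\alpha_{i+1}\,g(\alpha_i)\,\alpha_{i-1}\cdots\alpha_1$; this is a direct transcription of the formulas for $d_0,d_1,d_2$ in the lemma, reading each tensor factor $a\otimes\alpha\otimes a'$ as the functional $h\mapsto a\,h(\alpha)\,a'$. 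This yields the complex $C_{\mathcal M}(A)=(C_{\mathcal M}^*,\delta^*)$ displayed just before the proposition, so that $\mathrm{HH}^n(A)=\mathrm{Ker}\,\delta^n/\mathrm{Im}\,\delta^{n-1}$ for $n=0,1$ follows since $C_{\mathcal M}(A)$ computes Hochschild cohomology (the Anick resolution being an $A^e$-projective resolution of $A$ by Theorems \ref{morse matching} and \ref{morse}).

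The second step is to translate $C_{\mathcal M}(A)$ into parallel-paths language via Lemma \ref{identification-2}. Since $Q_0$, $Q_1$ and $\mathrm{Tip}(\mathcal{G})$ are all finite sets (the latter by hypothesis, as $\mathcal{G}$ is a finite reduced Gr\"obner basis), the lemma gives $k$-linear isomorphisms $\mathrm{Hom}_{E^e}(kQ_0,A)\cong k(Q_0//\mathcal{B})$, $\mathrm{Hom}_{E^e}(kQ_1,A)\cong k(Q_1//\mathcal{B})$, $\mathrm{Hom}_{E^e}(k\mathrm{Tip}(\mathcal{G}),A)\cong k(\mathrm{Tip}(\mathcal{G})//\mathcal{B})$, where we identify the basis $\mathcal{B}=\mathrm{NonTip}(I)$ of $A$ with paths. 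Under these isomorphisms a basis element $(e,\gamma)$ corresponds to the functional sending $e\mapsto\pi(\gamma)$ and all other vertices to $0$; applying $\delta^0$ and re-expanding in the basis $(\alpha,-)$ gives exactly $\psi_0(e,\gamma)=\sum_{\alpha\in Q_1e}(\alpha,\pi(\alpha\gamma))-\sum_{\beta\in eQ_1}(\beta,\pi(\gamma\beta))$, since $\delta^0(f)(\alpha)=\alpha f(s(\alpha))-f(t(\alpha))\alpha$ is nonzero only when $s(\alpha)=e$ (first term) or $t(\alpha)=e$ (second term). Similarly, $(\alpha,\gamma)\in k(Q_1//\mathcal{B})$ corresponds to the functional $g$ with $g(\alpha)=\pi(\gamma)$ and $g(\alpha')=0$ for $\alpha'\neq\alpha$; feeding this into $\delta^1$ and collecting coefficients of each $(\mathrm{Tip}(g),-)$ produces $\psi_1(\alpha,\gamma)=\sum_{g\in\mathcal{G}}\sum_{p\in\mathrm{Supp}(g)} c_g(p)\,(\mathrm{Tip}(g),\pi(p^{(\alpha,\gamma)}))$.

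The only real point requiring care is this last identity: I must check that the inner double sum $\sum_{p\in\mathrm{Supp}(g)}\sum_{i} c(p)\,\alpha_m\cdots\alpha_{i+1}\,g_0(\alpha_i)\,\alpha_{i-1}\cdots\alpha_1$ — where $g_0$ is the functional associated to $(\alpha,\gamma)$ — is precisely $\sum_{p\in\mathrm{Supp}(g)} c_g(p)\,\pi(p^{(\alpha,\gamma)})$. This is exactly the content of the definition of $p^{(\alpha,\gamma)}$: for a fixed path $p=\alpha_m\cdots\alpha_1\in\mathrm{Supp}(g)$, the term indexed by $i$ is nonzero iff $\alpha_i=\alpha$, and in that case it equals $\pi(\alpha_m\cdots\alpha_{i+1}\,\gamma\,\alpha_{i-1}\cdots\alpha_1)$, i.e.\ the image under $\pi$ of $p$ with that one occurrence of $\alpha$ replaced by $\gamma$; summing over all occurrences $i$ gives $\pi(p^{(\alpha,\gamma)})$ by definition. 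Thus the expected main obstacle is purely bookkeeping — matching the indexing of occurrences of $\alpha$ in the monomials of $g$ with the definition of the substitution operator $\varepsilon^{(\alpha,\gamma)}$ — and once this is done the formulas for $\psi_0,\psi_1$ and the identifications $\mathrm{HH}^0(A)\cong\mathrm{Ker}\,\psi_0$, $\mathrm{HH}^1(A)\cong\mathrm{Ker}\,\psi_1/\mathrm{Im}\,\psi_0$ follow immediately. As a sanity check, one recovers Strametz's Proposition \ref{strametz} by specializing to a monomial algebra, where $\mathcal{G}=Z$ consists of single paths, each $\mathrm{Supp}(g)=\{g\}$ with $c_g(g)=1$, and $\mathrm{Tip}(g)=g$.
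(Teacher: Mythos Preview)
Your proposal is correct and follows exactly the approach the paper intends: the paper's own proof consists of the single line ``The verifications are straightforward,'' and you have carried out precisely those verifications by transporting $\delta^0,\delta^1$ through the identifications of Lemmas~\ref{identification-1} and~\ref{identification-2} and matching the resulting formulas with $\psi_0,\psi_1$ via the definition of $p^{(\alpha,\gamma)}$. Your bookkeeping is accurate, including the key observation that the inner sum over positions $i$ with $\alpha_i=\alpha$ reproduces the substitution operator, and the specialization to the monomial case is a nice consistency check.
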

		
		\begin{proof} The verifications are straightforward.
		\end{proof}	
		
		We should mention that there is an analogous result in \cite[~Section~2.2]{RSS} given by Rubio y Degrassi, Schroll and Solotar, whose method is based on the Chouhy-Solotar projective  resolution.
		
		\begin{theorem}\label{gen-lie bracket} $(\mathrm{c.f.}~\cite[~Theorem~2.7]{Strametz}$$)$
			The bracket $$[(\alpha,\gamma),(\beta,\varepsilon)]=(\beta,\pi(\varepsilon^{(\alpha,\gamma)}))-(\alpha,\pi(\gamma^{(\beta,\varepsilon)}))$$
			for all $(\alpha,\gamma),(\beta,\varepsilon)\in Q_1//\mathcal{B}$ induces a Lie algebra structure on $\mathrm{Ker}\psi_1/\mathrm{Im}\psi_0$, such that $\mathrm{HH}^1({A})$ and $\mathrm{Ker}\psi_1/\mathrm{Im}\psi_0$ are isomorphic as Lie algebras.
		\end{theorem}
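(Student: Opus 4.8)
The plan is to recognize $\mathrm{Ker}\psi_1/\mathrm{Im}\psi_0$ as the usual model $\mathrm{Der}_{E^e}(A,A)/\mathrm{Inn}_{E^e}(A,A)$ of $\mathrm{HH}^1(A)$ with its commutator Lie bracket, and then to check that the bilinear map in the statement is exactly that commutator written out on the generators $(\alpha,\gamma)$. So the whole argument rests on two things already available: Proposition \ref{gen-parallel paths} (which gives the $k$-linear identification $\mathrm{HH}^1(A)\cong\mathrm{Ker}\psi_1/\mathrm{Im}\psi_0$ and the explicit $\psi_0,\psi_1$) and the description of the Lie structure on $\mathrm{HH}^1$ recalled in Section 2.2, namely that on $\mathrm{Ker}\delta^1=\mathrm{Der}_{E^e}(A_+,A)\cong\mathrm{Der}_{E^e}(A,A)$ the bracket is $[f,g]_{HH}=f\circ p_A\circ g-g\circ p_A\circ f$, that $\mathrm{Im}\delta^0$ is a Lie ideal, and that the quotient is $\mathrm{HH}^1(A)$ as a Lie algebra.

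First I would set up the dictionary. By Lemma \ref{identification-2} an element of $k(Q_1//\mathcal{B})$ is the same datum as an $E^e$-linear map $kQ_1\to A$, i.e.\ a choice of value $\gamma_\alpha\in t(\alpha)As(\alpha)$ for each arrow $\alpha$; such a datum extends uniquely to an $E$-derivation of $kQ$, and this derivation descends to $A=kQ/I$ iff it kills $I$, iff it kills $\mathcal{G}$, iff $\psi_1$ vanishes on the datum (this is precisely how $\psi_1$ was built, since $\pi(p^{(\alpha,\gamma)})$ is the value on $p$ of the derivation $D_{\alpha,\gamma}$ sending $\alpha\mapsto\gamma$ and every other arrow to $0$). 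Consequently $(\alpha,\gamma)\mapsto D_{\alpha,\gamma}$ extends to a $k$-linear isomorphism $\Phi:\mathrm{Ker}\psi_1\xrightarrow{\ \sim\ }\mathrm{Der}_{E^e}(A,A)$, because a derivation is determined by, and freely determined modulo the relations by, its values on arrows. Under $\Phi$ the subspace $\mathrm{Im}\psi_0$ is carried onto the inner derivations: a direct reading of $\psi_0$ shows $\Phi(\psi_0(e,\gamma))$ is the inner derivation $x\mapsto x\gamma-\gamma x$, and as $\gamma$ runs over the cyclic paths of $\mathcal{B}$ (which form a $k$-basis of $A^E$) these span $\mathrm{Inn}_{E^e}(A,A)$. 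Hence, combined with Proposition \ref{gen-parallel paths}, $\Phi$ descends to the vector-space isomorphism $\mathrm{HH}^1(A)\cong\mathrm{Ker}\psi_1/\mathrm{Im}\psi_0$ obtained there; moreover $\mathrm{Ker}\psi_1\cong\mathrm{Der}_{E^e}(A,A)\cong\mathrm{Der}_{E^e}(A_+,A)=\mathrm{Ker}\delta^1$, so the bridge to the bar-resolution picture of Section 2.2 is literally an equality of spaces.

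Next I would transport the bracket. Extending $f,g\in\mathrm{Der}_{E^e}(A_+,A)$ to genuine $E^e$-derivations $\tilde f,\tilde g$ of $A$ (which vanish on $E$), one has $\tilde f\circ g=f\circ p_A\circ g$ on $A_+$ because $\tilde f$ annihilates $E$ while $p_A$ only discards the $E$-component; thus $[f,g]_{HH}$ is simply the restriction to $A_+$ of the ordinary commutator $[\tilde f,\tilde g]$, so the $p_A$-twist disappears under $\Phi$ and the bracket we must realize on $\mathrm{Ker}\psi_1$ is $\xi,\eta\mapsto\Phi^{-1}\big([\Phi\xi,\Phi\eta]\big)$. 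Evaluating on an arrow $\zeta$: since $D_{\beta,\varepsilon}(\zeta)$ is $\varepsilon$ when $\zeta=\beta$ and $0$ otherwise, $(D_{\alpha,\gamma}\circ D_{\beta,\varepsilon})(\zeta)=\delta_{\zeta,\beta}\,D_{\alpha,\gamma}(\varepsilon)=\delta_{\zeta,\beta}\,\pi(\varepsilon^{(\alpha,\gamma)})$, and symmetrically $(D_{\beta,\varepsilon}\circ D_{\alpha,\gamma})(\zeta)=\delta_{\zeta,\alpha}\,\pi(\gamma^{(\beta,\varepsilon)})$, where $D_{\alpha,\gamma}(\varepsilon)=\pi(\varepsilon^{(\alpha,\gamma)})$ is exactly the operation ``replace one occurrence of $\alpha$ by $\gamma$'' followed by reduction. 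Hence $[D_{\alpha,\gamma},D_{\beta,\varepsilon}]$ sends $\beta\mapsto\pi(\varepsilon^{(\alpha,\gamma)})$, $\alpha\mapsto-\pi(\gamma^{(\beta,\varepsilon)})$ and all other arrows to $0$ (the case $\alpha=\beta$ being the sum of the two contributions on $\alpha$), so writing its arrow-values in the basis $\mathcal{B}$ and using $\mathrm{Hom}_{E^e}(kQ_1,A)\cong k(Q_1//\mathcal{B})$ gives $\Phi^{-1}([D_{\alpha,\gamma},D_{\beta,\varepsilon}])=(\beta,\pi(\varepsilon^{(\alpha,\gamma)}))-(\alpha,\pi(\gamma^{(\beta,\varepsilon)}))$, which is the asserted formula; extending bilinearly, the stated bracket agrees with the commutator bracket transported through $\Phi$. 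In particular it is closed on $\mathrm{Ker}\psi_1$ and makes it a Lie algebra, $\mathrm{Im}\psi_0=\Phi^{-1}(\mathrm{Inn}_{E^e}(A,A))$ is a Lie ideal, and the induced Lie algebra on $\mathrm{Ker}\psi_1/\mathrm{Im}\psi_0$ is, via $\Phi$, isomorphic to $\mathrm{HH}^1(A)$.

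I expect the only genuinely delicate point to be the bookkeeping in the last paragraph: keeping the extension/restriction identifications between $\mathrm{Ker}\psi_1$, $\mathrm{Der}_{E^e}(A_+,A)$ and $\mathrm{Der}_{E^e}(A,A)$ mutually compatible, verifying carefully that the $p_A$-twist in $[\,,]_{HH}$ is absorbed into the plain commutator, and handling the signs and the $\alpha=\beta$ coincidence uniformly in the generator computation. These are precisely the places where Strametz's monomial argument, \cite[Theorem 2.7]{Strametz}, must be adapted, the new features being the reduction map $\pi$ and the several paths in the support of each $g\in\mathcal{G}$; bilinearity, antisymmetry and the Jacobi identity are then inherited for free from the facts about derivations recalled in Section 2.2.
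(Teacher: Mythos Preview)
Your proof is correct. The route differs from the paper's mainly in framing rather than in substance. The paper invokes the explicit Morse-theoretic comparison maps $w_1,\xi_1$ between the reduced bar resolution and the Anick resolution (Theorem \ref{morse}) and transfers the Gerstenhaber bracket through them, defining $[(\alpha,\gamma),(\beta,\varepsilon)]:=i^{-1}\big([i((\alpha,\gamma))w_1,i((\beta,\varepsilon))w_1]_{\mathrm{HH}}\circ\xi_1\big)$ and then computing this to be the stated formula. You instead bypass the comparison maps and directly identify $\mathrm{Ker}\psi_1$ with $\mathrm{Der}_{E^e}(A,A)$ via ``extend an $E^e$-map $kQ_1\to A$ to a derivation'', then pull back the commutator. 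These agree on the nose: the paper's $i((\alpha,\gamma))\circ w_1$, applied to $1\otimes p\otimes 1$, is precisely $\pi(p^{(\alpha,\gamma)})=D_{\alpha,\gamma}(p)$, so $i(-)\circ w_1$ is your $\Phi$ restricted to $A_+$, and both arguments reduce to the same one-line evaluation on an arrow. What your approach buys is directness and independence from the Morse machinery for this particular degree; what the paper's approach buys is a systematic transfer-of-structure recipe (it cites Volkov \cite{Volkov}) that would scale uniformly to higher operations and makes transparent that the resulting isomorphism is exactly the one of Proposition \ref{gen-parallel paths}.
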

		
		\begin{proof} As $B({A})$ and $B^{\mathcal{M}}({A})$ are projective resolutions of ${A}^e$-modules ${A}$, there exist, thanks to the Comparison Theorem, chain maps $w:B({A})\rightarrow B^{\mathcal{M}}({A})$ and $\xi:B^{\mathcal{M}}({A})\rightarrow  B({A})$ such that they give inverse homotopy equivalences.
			\[\begin{tikzcd}
				{B({A}):} & \cdots & {{A}\otimes_E {A}^+\otimes_E{A}} & {A}\otimes_E{A} & {A} & 0 \\
				{B^{\mathcal{M}}({A}):} & \cdots & {{A}\otimes_E kQ_1\otimes_E{A}} & {{A}\otimes_E kQ_0\otimes_E{A}} & {A} & 0
				\arrow[from=1-2, to=1-3]
				\arrow[from=2-2, to=2-3]
				\arrow[from=1-3, to=1-4]
				\arrow[from=2-3, to=2-4]
				\arrow[from=1-4, to=1-5]
				\arrow[from=2-4, to=2-5]
				\arrow[shift right=1, no head, from=1-5, to=2-5]
				\arrow[shift left=1, from=1-5, to=1-6]
				\arrow[from=2-5, to=2-6]
				\arrow[shift left=1, no head, from=1-5, to=2-5]
				\arrow["{w_0}"', shift right=1, harpoon', from=1-4, to=2-4]
				\arrow["{\xi_0}"', shift right=1, harpoon', from=2-4, to=1-4]
				\arrow["{w_1}"', shift right=1, harpoon', from=1-3, to=2-3]
				\arrow["{\xi_1}"', shift right=1, harpoon', from=2-3, to=1-3]
			\end{tikzcd}\]
			Indeed, according to the algebraic Morse theory (Theorem \ref{morse}), we can make the above chain maps explicitly as follows:
			\begin{itemize}
				\item $w_0:{A}\otimes_E{A}\rightarrow{A}\otimes_E kQ_0\otimes_E{A}$,\quad $\lambda\otimes\mu\mapsto\lambda\otimes e\otimes\mu$;
				
				\item $\xi_0:{A}\otimes_E kQ_0\otimes_E{A}\rightarrow{A}\otimes_E{A}$, \quad $\lambda\otimes e\otimes\mu\mapsto\lambda\otimes\mu$;
				
				\item $w_1:{A}\otimes_E{A}^+\otimes_E{A}\rightarrow{A}\otimes_E kQ_1\otimes_E{A}$,\quad
				$$\lambda\otimes\alpha_{n}\cdots\alpha_{1}\otimes\mu\mapsto\sum_{i=1}^{n}\lambda\alpha_{n}\cdots\alpha_{i+1}\otimes\alpha_{i}\otimes\alpha_{i-1}\cdots\alpha_{1}\mu;$$
				
				\item $\xi_1:{A}\otimes_E kQ_1\otimes_E{A}\rightarrow{A}\otimes_E{A}^+\otimes_E{A}$,\quad $\lambda\otimes\alpha\otimes\mu\mapsto\lambda\otimes\alpha\otimes\mu$,
			\end{itemize}
			where $\lambda,\mu\in{A}$, $\alpha,\alpha_{1},\cdots,\alpha_n\in Q_1$ and $\alpha_{n}\cdots\alpha_1\in \mathrm{NonTip}(I)\backslash Q_0$. 
			
			By the identifications in Lemma \ref{identification-1} and Lemma \ref{identification-2}, there is an isomorphism of vector spaces from $k(Q_1//\mathcal{B})$ to $\mathrm{Hom}_{{A}^e}(A\otimes_E kQ_1\otimes_EA,{A})$ which we denote by $i$.
			
			Using the above comparison morphisms, if we have defined some bilinear operation on $\mathrm{HH}^1({A})$, then we can define an induced operation on $\mathrm{Ker}\psi_1/\mathrm{Im}\psi_0$. This method is widely used, see for example an introduction in Volkov \cite[Section 3]{Volkov}. For simplicity, we often omit the sign $\circ$ and write the composition of morphisms directly. In this way, the Lie bracket on $\mathrm{Ker}\psi_1/\mathrm{Im}\psi_0$ can be defined by
			$$[(\alpha,\gamma),(\beta,\varepsilon)]:=i^{-1}([i((\alpha,\gamma))w_1,i((\beta,\varepsilon))w_1]_{\mathrm{HH}}\cdot\xi_1),$$
			for all $(\alpha,\gamma),(\beta,\varepsilon)\in Q_1//\mathcal{B}$.
			$$
			\begin{array}{*{3}{lll}}
				[(\alpha,\gamma),(\beta,\varepsilon)] & =&i^{-1}(i((\alpha,\gamma))w_1p_Ai((\beta,\varepsilon))w_1\xi_1-i((\beta,\varepsilon))w_1p_Ai((\alpha,\gamma))w_1\xi_1)\\
				& =& i^{-1}(i((\alpha,\gamma))w_1p_Ai((\beta,\varepsilon))-i((\beta,\varepsilon))w_1p_Ai((\alpha,\gamma)))\\
				&=& (\beta,\pi(\varepsilon^{(\alpha,\gamma)}))-(\alpha,\pi(\gamma^{(\beta,\varepsilon)})).
			\end{array}
			$$
			The Lie algebra isomorphism from $\mathrm{Ker}\psi_1/\mathrm{Im}\psi_0$ to $\mathrm{HH}^1({A})$ is given by $i(-)\circ w_1$, since the equation  
			$$i([(\alpha,\gamma),(\beta,\varepsilon)])w_1=[i((\alpha,\gamma))w_1,i((\beta,\varepsilon))w_1]_{\mathrm{HH}}$$
			naturally holds at the cohomology level.		
		\end{proof}

		\subsection{Graded structure of $\mathrm{HH}^1({A})$ when $I$ is a homogeneous ideal}
		
		In this subsection, we discuss the graded structure of the Lie algebra $\mathrm{HH}^1({A})$ when $I$ is a homogeneous ideal. Note that there is a similar discussion for this graded structure in \cite[~Section~2.3]{RSS}. However, our Lemma \ref{loop-power} and Proposition \ref{l_{-1}} below are new.
		
		By Theorem \ref{gen-lie bracket}, we can regard the Lie algebra structures on $\mathrm{HH}^1({A})$ and on $\mathrm{Ker}\psi_1/\mathrm{Im}\psi_0$ as the same one.
		Let ${A}=kQ/I$ be a quiver algebra. Let $\mathcal{G}$ be a reduced Gr{\"o}bner basis of $I$, $\mathcal{B}\subseteq Q_{\geq 0}$ be the $k$-linear basis of ${A}$ with respect to $\mathcal{G}$, and denote $\mathcal{B}_n:=\mathcal{B}\cap Q_{n}$. Assume that the ideal $I$ is homogeneous, that is, under the length-lexicographic order, for all $ g\in\mathcal{G}$, the summands of $g$ are in same length. In this case, the canonical projection $\pi:kQ\rightarrow {A}$ does not change the length of any path. Therefore, if $(\alpha,\gamma)\in Q_1//\mathcal{B}_n$, $(\beta,\varepsilon)\in Q_1//\mathcal{B}_m$, then
		$$[(\alpha,\gamma),(\beta,\varepsilon)]\in k(Q_1//\mathcal{B}_{n+m-1}).$$
		We have $k(Q_1//\mathcal{B})=\bigoplus_{i\in\mathbb{N}}k(Q_1//\mathcal{B}_i)$, $\mathrm{HH}^1({A})=\mathrm{Ker}\psi_1/\mathrm{Im}\psi_0$. Then, if we set 
		\begin{itemize}
			\item $L_{-1}:=k(Q_1//Q_0)\cap \mathrm{Ker}\psi_1$
			
			\item $L_{0}:=k(Q_1//Q_1)\cap \mathrm{Ker}\psi_1\bigg/ \langle \psi_0(e,e)\;|\;e\in Q_0\rangle$
			
			\item $L_{i}:=k(Q_1//\mathcal{B}_{i+1})\cap \mathrm{Ker}\psi_1\bigg/ \langle \psi_0(e,\gamma)\;|\;(e,\gamma)\in Q_0//\mathcal{B}_i\rangle$
		\end{itemize}
		for all $i\in\mathbb{N},i\geq 1$. Note that $$\psi_0(e,e)=\sum_{a\in Q_1e}(a,a)-\sum_{b\in eQ_1}(b,b),\;\;\psi_0(e,\gamma)=\sum_{a\in Q_1e}(a,\pi(a\gamma))-\sum_{b\in eQ_1}(b,\pi(\gamma b)).$$ Then we obtain $\mathrm{HH}^1({A})=\bigoplus_{i\geq -1}L_i$, and $[L_i,L_j]\subseteq L_{i+j}$ for all $i,j\geq -1$, where $L_{-2}=0$.
		The above discussion (following the idea from \cite[Section 4]{Strametz}) shows that we have a gradation on the Lie algebra $\mathrm{HH}^1({A})$ if $I$ is a homogeneous ideal of $kQ$. 
		
		\begin{remark} Note that the above notions of $L_{-1}$ and of $L_0$ and the following lemma make sense even if the ideal $I$ is not homogeneous and they will be used in later sections.
		\end{remark}
		
		If we restrict our discussion to the case where $A=kQ/I$ is finite dimensional, we can give some more precise description about the Gr\"{o}bner basis of $I$ and the $L_{-1}$.
		
		\begin{lemma} \label{loop-power}
			Let ${A}=kQ/I$ be a finite dimensional quiver algebra (with $I\subseteq kQ_{\geq 2}$). Let $a$ be a loop of the quiver $Q$. Then there exists an integer $m\geq 2$ such that $a^m\in \mathrm{Tip}(\mathcal{G})$ and $a^{m-1}\in\mathcal{B}$.
		\end{lemma}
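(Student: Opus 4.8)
The plan is to exploit finite-dimensionality to force the powers of the loop $a$ to eventually leave the monomial basis $\mathcal{B}$, and then to use the fact that a loop has very restricted subpaths to identify which tip is responsible.

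First I would recall that $\mathcal{B}$ is identified with $\mathrm{NonTip}(I)$, the set of paths of $Q$ that are not divisible by any element of $\mathrm{Tip}(\mathcal{G})$, and that this set is a $k$-basis of $A$ (cf.\ Section 2.1); since $\dim_k A<\infty$, the set $\mathcal{B}$ is finite. As the paths $a,a^2,a^3,\dots$ are pairwise distinct, they cannot all lie in $\mathcal{B}$, so there is a least positive integer $m$ with $a^m\notin\mathcal{B}$, equivalently $a^m\in\mathrm{Tip}(I)=\langle\mathrm{Tip}(\mathcal{G})\rangle$. Because $I\subseteq kQ_{\geq 2}$, every element of $\mathrm{Tip}(I)$ has length at least $2$; as $l(a)=1$ this already gives $m\geq 2$, and by the minimality of $m$ we have $a^{j}\in\mathcal{B}$ for all $1\leq j\leq m-1$, in particular $a^{m-1}\in\mathcal{B}$.

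Next I would analyse the membership $a^m\in\langle\mathrm{Tip}(\mathcal{G})\rangle$: there are paths $p,q$ of $Q$ and an element $w\in\mathrm{Tip}(\mathcal{G})$ with $a^m=p\,w\,q$ as a concatenation of paths. Since $a$ is a loop, the arrow sequence of $a^m$ is $(a,\dots,a)$, so each of $p$, $w$, $q$ is a power of $a$; in particular $w=a^j$ for some $0\leq j\leq m$, and $l(w)\geq 2$ forces $j\geq 2$. If $j\leq m-1$, then $a^j=w\in\mathrm{Tip}(\mathcal{G})\subseteq\mathrm{Tip}(I)$ would contradict the minimality of $m$; hence $j=m$, that is, $w=a^m\in\mathrm{Tip}(\mathcal{G})$. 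Together with $a^{m-1}\in\mathcal{B}$, this is exactly the assertion of the lemma.

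I do not expect a genuine obstacle. The two points worth a little care are the identification of $\mathcal{B}$ with $\mathrm{NonTip}(I)$ — which is what makes finite-dimensionality yield finiteness of $\mathcal{B}$ — and the elementary observation that the only subpaths of a power of a loop are again powers of that loop, which is precisely what excludes any tip other than a power of $a$ from dividing $a^m$.
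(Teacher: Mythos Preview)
Your argument is correct and follows essentially the same approach as the paper: both use finite-dimensionality of $A$ to force some power of the loop to leave $\mathcal{B}=\mathrm{NonTip}(I)$, together with the observation that the only subpaths of $a^m$ are powers of $a$. The only cosmetic difference is the order: you take the minimal $m$ with $a^m\notin\mathcal{B}$ (so $a^{m-1}\in\mathcal{B}$ is immediate) and then show this $a^m$ lies in $\mathrm{Tip}(\mathcal{G})$, whereas the paper first exhibits $m$ with $a^m\in\mathrm{Tip}(\mathcal{G})$ and then invokes tip-reducedness of $\mathcal{G}$ to obtain $a^{m-1}\in\mathcal{B}$.
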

		
		\begin{proof} Clearly there exists an integer $m\geq 2$ such that $a^m\in \mathrm{Tip}(\mathcal{G})$, since otherwise, the elements $a^2,a^3,\cdots$ ($\subseteq \mathrm{NonTip}(I)$) are $k$-linearly independent, contradicting the finite-dimensionality of $A$. Moreover, $a^{m-1}\in\mathcal{B}$ since $\mathcal{G}$ is tip-reduced.
		\end{proof}
		
		Since the following condition of the characteristic of the field $k$ is useful, we give a precise definition as follows.
		\begin{definition}\label{condition*}
			Let $kQ/I$ be a finite dimensional quiver algebra (where $I\subseteq kQ_{\geq 2}$ but not necessarily homogeneous). We say that the field $k$ satisfies condition $(*)$ for $kQ/I$, if for a given Gr\"{o}bner basis $\mathcal{G}$ of $I$ and every $(a,e)\in Q_1//Q_0$ of $Q$, the characteristic of the field $k$ does not divide the integer $m\geq 2$ for which $a^m\in \mathrm{Tip}(\mathcal{G})$, $a^{m-1}\in\mathcal{B}$.
		\end{definition}
		
		The next result is a proper generalization of some results for monomial algebras in \cite[Proposition 4.2]{Strametz} and it may be useful when one deals with algebras defined by homogeneous ideals.
		
		\begin{proposition} \label{l_{-1}}
			Let ${A}=kQ/I$ be a finite dimensional quiver algebra where $I$ is a homogeneous ideal. Each of the following conditions implies $L_{-1}=0$:
			\begin{enumerate}[(1)]
				
				\item The quiver does not have a loop.
				
				\item  The field $k$ satisfies condition $(*)$ for $A$.
				
				\item  The characteristic of $k$ is equal to $0$.
			\end{enumerate}
		\end{proposition}
		
		\begin{proof}
			\begin{enumerate}[(1)]
				\item: Clear.
				
				\item: Let $(a,e)\in Q_1//Q_0$ where $a$ is a loop in $Q$. We begin to show that $(a,e)\notin \mathrm{Ker}\psi_1$. 
				
				\qquad By Lemma \ref{loop-power}, there exists an integer $m\geq 2$ such that $p:=a^m\in \mathrm{Tip}(\mathcal{G})$ and $a^{m-1}\in\mathcal{B}$. If the characteristic of $k$ does not divide $m$, then $\pi(p^{(a,e)})=ma^{m-1}\neq 0$. Consider $\mathrm{Tip^{-1}}(p)\in\mathcal{G}$, if there is another $p'\in \mathrm{Supp}(\mathrm{Tip^{-1}}(p))$ with $p'\neq p$, such that $a^{m-1}$ is a summand of $\pi(p'^{(a,e)})$, so there exists a decomposition of $p'$ such that $p'=p_1ap_2$, $p_1p_2$ is a summand of $p'^{(a,e)}$ and $a^{m-1}$ is a summand of $\pi(p_1p_2)$. However, since $\mathrm{Tip}(\mathrm{Tip^{-1}}(p))=p\in \mathrm{Tip}(\mathcal{G})$, $p=a^m>p'=p_1ap_2$ under the length-lexicographic order. This will lead to $p_1p_2<a^{m-1}$. This is a contradiction, since $a^{m-1}$ is a summand of $\pi(p_1p_2)$. Therefore, for $g:=\mathrm{Tip^{-1}}(p)\in\mathcal{G}$, we have $\sum_{p\in \mathrm{Supp}(g)}c_g(p)\pi(p^{(a,e)})\neq 0$.
				
				\qquad Then we show that above $(a,e)$ is not a summand of any element in $\mathrm{Ker}\psi_1$.
				
				\qquad If the $(a,e)$ above is a summand of some nonzero element $t\in L_{-1}$, then there exists some $(b_1,e_1)\in k(Q_1//Q_0)$ and is also a summand of $t$, such that for some $q_1\in \mathrm{Supp}(g)$, $l(q_1)=m$ and $a^{m-1}$ is a summand in $\pi(q_1^{(b_1,e_1)})$.
				
				\begin{itemize}
					\item If $a^{m-1}$ is not a summand of $q_1^{(b_1,e_1)}$, since $a^m>q_1$, the only case is that $q_1=a^{m_1}b_1q_1'$ with $l(q_1')=m-m_1-1$, and so we get $q_1'>a^{m-m_1-1}$ and $a>b_1$.
					
					\item If $a^{m-1}$ is a summand of $q_1^{(b_1,e_1)}$, then $q_1=a^{k_1}ba^{k_2}$, with $k_1+k_2=m-1$. By $a^m>a^{k_1}ba^{k_2}$, we also have $a>b_1$.
				\end{itemize}
				
				\qquad Note that $b_1$ is also a loop with $b_1^{n_1}\in \mathrm{Tip}\mathcal{G}$, $b_1^{n_1-1}\in\mathcal{B}$. Since $(b_1,e_1)$ is also a summand of $t$, there exists some $(b_2,e_2)\in k(Q_1//Q_0)$ and is also a summand of $t$, such that for some $q_2\in \mathrm{Supp}(\mathrm{Tip}^{-1}(b_1^{n_1}))$, $l(q_2)=n_1$ and $b_1^{n_1-1}$ is a summand in $\pi(q_2^{(b_2,e_2)})$. Same as before, we have $b_1>b_2$.
				
				\qquad Repeat this process, we get an infinite descending sequence of loops:
				$$a>b_1>b_2>\cdots$$
				This contradicts the fact that $Q$ is a finite quiver. 
				
				\qquad Therefore, there is no nonzero element $t$ in $L_{-1}$ that has a summand $(a,e)$, which implies $L_{-1}=0$.
				
				\item: Clear, because (3) implies (2).
			\end{enumerate}		
		\end{proof}
		
		\begin{remark}
			\begin{enumerate}[(i)]	
				\item The results (1) and (3) in Proposition \ref{l_{-1}} are known even when the ideal $I$ is not homogeneous. See for example \cite[Lemma 2.6]{LR} and \cite[Theorem 4.2]{Hoc}, respectively.
				
				\item It is obvious that if $L_{-1}$ equals $0$, then for every loop $(a,e)\in Q_1//Q_0$, there exists an element $g\in\mathcal{G}$, such that $\sum_{p\in \mathrm{Supp}(g)}c_g(p)\pi(p^{(a,e)})\neq 0$.
				However, unlike the monomial case \cite[Lemma 4.1]{Strametz}, the converse is not true. For example, let $A=k\langle x,y\rangle/\langle x^3+yx^2, xy+yx, y^2\rangle$ with $\mathrm{char}(k)=2$, which is a finite dimensional symmetric algebra with $\langle x^3+yx^2, xy+yx, y^2\rangle$ a homogeneous ideal of $k\langle x,y\rangle$. It can be checked that $\mathcal{G}=\{x^3+yx^2, xy+yx, y^2\}$ is a Gr\"{o}bner basis of the ideal $\langle x^3+yx^2, xy+yx, y^2\rangle$ under the length-lexicographic order with respect to $x>y$ and $\mathrm{Tip}\mathcal{G}=\{x^3,xy,y^2\}$. We have 
				$$\psi_1((x,1))=3(x^3,x^2)+2(x^3,yx)+2(xy,y)=(x^3,x^2)\neq 0,$$
				$$\psi_1((y,1))=(x^3,x^2)+2(xy,x)+2(y^2,y)=(x^3,x^2)\neq 0.$$
				But $(x,1)+(y,1)\in L_{-1}$ and so $L_{-1}\neq 0$.
				
				\item For BGAs (whenever the ideal $I$ is homogeneous or not), we can deduce a stronger conclusion from the condition $(2)$ in Proposition \ref{l_{-1}}, see Proposition \ref{BGA L-1}. 
				
			\end{enumerate}	
		\end{remark}

		\section{Brauer graph algebras}
		
		Brauer graph algebras form an important class of finite dimensional tame algebras (see for example, Schroll \cite{SS}). In recent years, there has been a renewed interest in these algebras. Since Brauer graph algebras are in general not monomial algebras, the parallel paths method developed by Strametz in \cite{Strametz} cannot be directly applied to study the first Hochschild cohomology of these algebras. In this section, we will use our generalized parallel paths method to study the first Hochschild cohomology groups of Brauer graph algebras.
		
		\subsection{Brauer graph algebras}

		\begin{definition}$(\cite[~Definition~2.1]{SS})$
			A Brauer graph $G$ is a tuple $G=(V,E, m, o)$ where
			\begin{itemize}
				\item $(V,E)$ is a finite (unoriented) connected graph with vertex set $V$ and edge set $E$.
				
				\item $m:V\rightarrow\mathbb{Z}_{>0}$ is a function, called the multiplicity or $multiplicity \; function$ of $G$.
				
				\item $o$ is called the orientation of $G$ which is given, for every vertex $v\in V$, by a cyclic ordering of the edges incident with $v$ such that if $v$ is a vertex incident to a single edge $i$ then if $m(v)=1$, the cyclic ordering at $v$ is given by $i$ and if $m(v)>1$ the cyclic ordering at $v$ is given by $i<i$.
			\end{itemize}
		\end{definition}

		In a concrete Brauer graph, we often use the sign $[m(v)]$ to denote the multiplicity of $v$. We note that the Brauer graph $G=(V,E)$ may contain loops and multiple edges. The valency of a vertex $v\in V$, denote by $val(v)$, is the number of edges in $G$ incident to $v$ with the convention that a loop is counted twice. We call the edge $i$ with vertex $v$ truncated at $v$ if $m(v)val(v)=1$.

		Given a Brauer graph $G=(V,E,m,o)$, we can define a quiver $Q_G=(Q_0,Q_1)$ as follows:
		$$Q_0:=E,$$	
		$$Q_1:=\{i\rightarrow j\;|\;i,j\in E,\;\text{there exists $ v\in V$, such that $i<j$ belong to $o(v)$}\}.$$

		For $i\in E$, if $v\in V$ is a vertex of $i$ and $i$ is not truncated at $v$, then there is a special $i$-cycle $C_v(\alpha)$ at $v$ which is an oriented cycle given by $o(v)$ in $Q_G$ with the starting arrow $\alpha$ (where the starting vertex of $\alpha$ in $Q_G$ is $i$). Note that if $i$ is a loop at $v$, there are exactly two special $i$-cycles at $v$; for an example, see \cite[Example 2.3 (1)]{GL}. However, if we do not care about the starting arrows of the special cycles, then there is a unique special cycle (up to cyclic permutation) at each non-truncated vertex $v$ and we will just denote it by $C_v$. The corresponding path algebra of the above quiver $Q_G$ is denoted by $kQ_G$.
		
		We now define an ideal $I_G$ in $kQ_G$ generated by three types of relations. For this recall that we identify the set of edges $E$ of a Brauer graph $G$ with the set of vertices $Q_0$ of the corresponding quiver $Q_G$ and that we denote the set of vertices of the Brauer graph by $V$.
		
		\begin{itemize}
			\item Relations of type \uppercase\expandafter{\romannumeral1}
			$$C_v(\alpha)^{m(v)}-C_{v'}(\alpha')^{m(v')}$$
			for any $i\in Q_0$ and for any special $i$-cycles $C_v(\alpha)$ and $C_{v'}(\alpha')$ at $v$ and $v'$, respectively, such that both $v$ and $v'$ are not truncated.
			
			\item Relations of type \uppercase\expandafter{\romannumeral2}
			$$\alpha C_v(\alpha)^{m(v)}$$
			for any $i\in Q_0$, any $v\in V$ and where $C_v(\alpha)$ is a special $i$-cycle at $v$ with starting arrow $\alpha$.
			
			\item Relations of type \uppercase\expandafter{\romannumeral3}
			$$\beta\alpha$$
			for any $i\in Q_1$ such that $\beta\alpha$ is not a subpath of any special cycle except if $\beta=\alpha$ is a loop associated with a vertex $v$ of valency one and multiplicity $m(v)>1$.
		\end{itemize}
		The quotient algebra $A=kQ_G/I_G$ is called the Brauer graph algebra (abbr. BGA) of the Brauer graph $G$. In fact, $A$ is a finite dimensional symmetric algebra, that is, $A$ is a finite dimensional algebra such that $A\cong \mathrm{Hom}_k(A,k)$ as $A$-$A$-bimodules. Moreover, $A$ is also a special biserial algebra, which means that $A$ satisfies the following conditions:
		
		\begin{enumerate}[(1)]
			\item At every vertex $i$ in $Q_G$, there are at most two arrows starting at $i$ and there are at most two arrows ending at $i$.
			
			\item For every arrow $\alpha$ in $Q_G$, there exists at most one arrow $\beta$ such that $\beta\alpha\notin I_G$ and there exists at most one arrow $\gamma$ such that $\alpha\gamma\notin I_G$.
		\end{enumerate}
		
		We will also use the following notion on a Brauer graph $G$ from Guo and Liu \cite{GL}.	
		
		\begin{definition}$(\cite[~Definition~2.4]{GL})$
			For each vertex $v$ in a Brauer graph $G$, we define the graded degree grd$(v)$ as follows. If $val(v)=1$, we denote by $v'$ the unique vertex adjacent to $v$. If $G$ is given by a single edge with both vertices $v$ and $v'$ of multiplicity $1$, then $grd(v)=grd(v')=1$; Otherwise
			
			$$grd(v)= \left\{
			\begin{array}{*{3}{lll}}
				m(v)val(v), & \text{if $m(v)val(v)>1$;}\\
				grd(v'), & \text{if $m(v)val(v)=1$.}
			\end{array}
			\right.$$
		\end{definition}
		
		\subsection{The Gr{\"o}bner basis of the ideal $I_G$}

		Let $A=kQ_G/I_G$ be a BGA. In order to use the generalized parallel paths method on $A$, we need to find a Gr{\"o}bner basis of the ideal $I_G$ in $kQ_G$. Throughout, we will use the left length-lexicographic order on the set of paths of $Q_G$. We first show that the three types of generating relations in $I_G$ already give a Gr{\"o}bner basis.
		
		For convenience, for a quiver $Q$, $p,q\in Q_{\geq 0}$, we write $p\mid q$  if $p$ is a subpath of $q$.
		
		\begin{proposition}\label{Grobner basis of BGA}
			Let $G$ be a Brauer graph and $Q_G$ be the quiver of $G$, $$R_i=\{\text{the $i$-th type of relations in $kQ_G$}\}.$$
			Then the ideal $I_G=\langle R_1\cup R_2\cup R_3\rangle$ of $kQ_G$ has a Gr{\"o}bner basis $R_1\cup R_2\cup R_3$.
		\end{proposition}
		
		\begin{proof}
			By Theorem \ref{Termin.Thm}, it suffices to check that all overlap relations of elements in $R_1\cup R_2\cup R_3$ can be divided by the set $R_1\cup R_2\cup R_3$. Without loss of generality, for each $f$ in $R_1$, we assume that under the fixed left length-lexicographic order, the coefficient of $\mathrm{Tip}(f)$ is equal to $1$.
			
			\begin{itemize}
				\item Let $ f,g\in R_2\cup R_3$. Then $f,g$ are monomial relations. Hence $o(f,g,*,*)=0$.
				
				\item Let $ f=C_v(\alpha)^{m(v)}-C_{v'}(\alpha')^{m(v')}\in R_1$, $ g\in R_2\cup R_3$.
				If there exists $ b,c\in Q_{G\geq 0}$, such that $o(f,g,b,c)\neq 0$, then
				$$o(f,g,b,c)=-C_{v'}(\alpha')^{m(v')}\cdot c.$$
				
				\item Let $ f\in R_2\cup R_3$ and $ g=C_v(\alpha)^{m(v)}-C_{v'}(\alpha')^{m(v')}\in R_1$.
				If there exists $ b,c\in Q_{G\geq 0}$, such that $o(f,g,b,c)\neq 0$, then
				$$o(f,g,b,c)=b\cdot C_{v'}(\alpha')^{m(v')}.$$
				
				\item Let $ f=C_v(\alpha)^{m(v)}-C_{v_1}(\alpha')^{m(v_1)}, g=C_{v_2}(\alpha)^{m(v_2)}-C_{v_3}(\alpha')^{m(v_3)}\in R_1$.
				If there exists $ b,c\in Q_{G\geq 0}$, such that $o(f,g,b,c)\neq 0$, then
				$$o(f,g,b,c)=b\cdot C_{v_3}(\alpha')^{m(v_3)}-C_{v_1}(\alpha')^{m(v_1)}\cdot c.$$
			\end{itemize}
			By the construction of $I_G$, for a path $p\in Q_{\geq 0}$ that satisfies $p\mid C_v^{m(v)}$, if $p\cdot C_v(\alpha)^{m(v)}\neq 0$ in $kQ_G$ (respectively, $C_v(\alpha)^{m(v)}\cdot p\neq 0$ in $kQ_G$), then $p\cdot C_v(\alpha)^{m(v)}\in\langle R_2\rangle$ (respectively, $C_v(\alpha)^{m(v)}\cdot p\in\langle R_2\rangle$). Similarly, for a path $p\in Q_{\geq 0}$ that satisfies $p\nmid C_v^{m(v)}$, if $p\cdot C_v(\alpha)^{m(v)}\neq 0$ in $kQ_G$ (respectively, $C_v(\alpha)^{m(v)}\cdot p\neq 0$ in $kQ_G$), then $p\cdot C_v(\alpha)^{m(v)}\in\langle R_3\rangle$ (respectively, $C_v(\alpha)^{m(v)}\cdot p\in\langle R_3\rangle$).
			
			Therefore, all the nonzero overlap relations with respect to $R_1\cup R_2\cup R_3$ can be divided by some monomial relations in $R_2\cup R_3$. Hence $R_1\cup R_2\cup R_3$ is a Gr{\"o}bner basis of $I_G$ by Theorem \ref{Termin.Thm}.		
		\end{proof}
		
		\begin{remark}
		The above Gr{\"o}bner basis of $I_G$ may not be tip-reduced in general. Some relations of type \uppercase\expandafter{\romannumeral2} can  be reduced by the relations of type \uppercase\expandafter{\romannumeral1} and type \uppercase\expandafter{\romannumeral3}. However, with respect to an admissible order on $kQ_G$, we can reduce $R_1\cup R_2\cup R_3$ to a reduced Gr{\"o}bner basis (which we denote by $\mathcal{G}$) consisting of $R_1$, $R_3$ and part of $R_2$.
		\end{remark}
		
		The following proposition is a strengthened and generalized version of Proposition \ref{l_{-1}} for BGAs.

		\begin{proposition}\label{BGA L-1}
			Let ${A}$ be a BGA associated with a Brauer graph $G$. Suppose the field $k$ satisfies condition $(*)$ for $A$. Then every nonzero element in $\mathrm{Ker}\psi_1$ will not have summands in $k(Q_1//Q_0)$.
		\end{proposition}
		
		\begin{proof}
			First we recall from Proposition \ref{gen-parallel paths} that the map $\psi_1: k(Q_1//\mathcal{B})\rightarrow k(\mathrm{Tip}(\mathcal{G})//\mathcal{B})$ is defined by $$(\alpha,\gamma)\mapsto \sum_{g\in\mathcal{G}}\sum_{p\in \mathrm{Supp}(g)}c_g(p)\cdot(\mathrm{Tip}(g),\pi(p^{(\alpha,\gamma)}))$$
			where $g=\sum_{p\in \mathrm{Supp}(g)}c_g(p)p$, $c_g(p)\in k$, and $\mathcal{B}$ is identified with $\mathrm{NonTip}(I)$ as before.
			
			Now let $(a,e)\in Q_1//Q_0$ with $a$ a loop of $Q$. Since ${A}$ is finite dimensional,  by Lemma \ref{loop-power}, there exists an integer $m\geq 2$ such that $p:=a^m\in \mathrm{Tip}(\mathcal{G})$ and $a^{m-1}\in\mathcal{B}$. If the characteristic of $k$ does not divide $m$, then $\pi(p^{(a,e)})=ma^{m-1}$ is different from $0$. Now consider a special cycle $C_v$ with $a\mid C_v$.
			
			\begin{itemize}
				\item If there is another arrow in $C_v$ different from $a$, then according to the relations of the third type, $a^2\in\mathcal{G}$, and $2(a^2,a)$ will not appear in any summand of other elements in $\mathrm{Im}\psi_1$.
				
				\item If $a$ is the unique arrow in $C_v$, then by the definition of BGA and the property of the reduced Gr\"{o}bner basis, exactly one of the following two cases holds: $a^{m(v)+1}\in\mathcal{G}$ or $a^{m(v)}\in\mathrm{Tip}\mathcal{G}$. 
				In the first case, $(m(v)+1)(a^{m(v)+1},a^{m(v)})$ will not appear in any summand of other elements in $\mathrm{Im}\psi_1$. 
				In the second case, $\mathrm{Tip}^{-1}(a^{m(v)})=a^{m(v)}-q$ where $q$ is also a special cycle in $A$. Therefore, it does not exist any $(b,\varepsilon)\in k(Q_1//\mathcal{B})$, such that $m(v)(a^{m(v)},a^{m(v)-1})$ becomes a summand of $\psi_1((b,\varepsilon))$.
			\end{itemize}	
		\end{proof}
		
		\begin{remark}
			When $I_G$ is a homogeneous ideal of $kQ_G$, the conclusion of Proposition \ref{BGA L-1} is equal to $L_{-1}=0$, which can also be deduced from Proposition \ref{l_{-1}}.
		\end{remark}

		\subsection{Lie algebra structure on the first Hochschild cohomology group of a BGA}
		
		In this subsection, we assume that the characteristic of the ground field $k$ is $0$, unless otherwise stated. We often write simply $Q$ for $Q_G$.	
		
		Let $A=kQ/I$ be a BGA. Consider (for $\psi_1$, see Proposition \ref{gen-parallel paths})
		$$L_{0}:=k(Q_1//Q_1)\cap \mathrm{Ker}\psi_1\bigg/ \langle\sum_{a\in Q_1e}(a,a)-\sum_{b\in eQ_1}(b,b)|e\in Q_0\rangle,$$ which is a Lie subalgebra of $\mathrm{HH}^1(A)$.
		Furthermore, we can consider $L_{00}$ which is given by
		$$L_{00}:=\langle(\alpha,\alpha)|\alpha\in Q_1\rangle\cap \mathrm{Ker}\psi_1\bigg/ \langle\sum_{a\in Q_1e}(a,a)-\sum_{b\in eQ_1}(b,b)|e\in Q_0\rangle.$$
		Note that $L_{00}$ is an abelian Lie subalgebra of $\mathrm{HH}^1(A)$ and $L_{00}\subseteq L_0$.
		
		\begin{lemma}\label{dimL00 of A}
			Let $G=(V,E)$ be a Brauer graph and $A=kQ/I$ be the corresponding BGA. Then $$\mathrm{dim}_kL_{00}=|E|-|V|+2.$$
		\end{lemma}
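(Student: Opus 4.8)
The plan is to split $\dim_k L_{00}$ into the difference of two dimensions and compute each. Write $W_0:=\langle(\alpha,\alpha)\mid\alpha\in Q_1\rangle\subseteq k(Q_1//\mathcal B)$ and $D:=\langle\sum_{a\in Q_1e}(a,a)-\sum_{b\in eQ_1}(b,b)\mid e\in Q_0\rangle$, so that $L_{00}=(W_0\cap\mathrm{Ker}\,\psi_1)/D$. First I would note that $D\subseteq W_0\cap\mathrm{Ker}\,\psi_1$: the formula for $\psi_0$ in Proposition \ref{gen-parallel paths} gives $\psi_0((e,e))=\sum_{a\in Q_1e}(a,a)-\sum_{b\in eQ_1}(b,b)$, hence $D=\mathrm{Im}\big(\psi_0|_{\langle(e,e)\mid e\in Q_0\rangle}\big)\subseteq\mathrm{Im}\,\psi_0\subseteq\mathrm{Ker}\,\psi_1$, and each generator of $D$ is manifestly a combination of diagonal couples. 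Therefore $\dim_k L_{00}=\dim_k(W_0\cap\mathrm{Ker}\,\psi_1)-\dim_k D$, and it remains to compute the two terms.

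For $\dim_k D$: expressed in the basis $\{(\alpha,\alpha)\mid\alpha\in Q_1\}$ of $W_0$, the generators $\psi_0((e,e))$ form precisely the signed incidence matrix of the quiver $Q_G$ (a loop contributes a zero column, since its coefficient is $1-1=0$), whose rank is $|Q_0|$ minus the number of connected components of the underlying graph of $Q_G$ with loops deleted. Since $G$ is connected and a truncated vertex has valency $1$ (so it cannot be an interior vertex of a simple walk in $G$), any two edges of $G$ are joined by a walk passing only through non-truncated vertices, and along it the corresponding vertices of $Q_G$ are linked by non-loop arrows occurring in the special cycles; hence $Q_G$ minus loops is connected and $\dim_k D=|E|-1$.

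For $\dim_k(W_0\cap\mathrm{Ker}\,\psi_1)$: I would evaluate $\psi_1$ on $t=\sum_{\alpha}\lambda_\alpha(\alpha,\alpha)$. The elementary point is that $p^{(\alpha,\alpha)}=n_\alpha(p)\,p$, where $n_\alpha(p)$ is the number of occurrences of $\alpha$ in $p$. Using Theorem \ref{Grobner basis of BGA} and the remark following it (so that $\mathcal G$ consists of the relations of type I, the relations of type III, and part of those of type II), the components of $\psi_1(t)$ indexed by the monomial relations of $\mathcal G$ (types II and III) vanish, because each such relation is a path lying in $I_G$, so $\pi$ of it is $0$. For a relation $g=C_v(\alpha)^{m(v)}-C_{v'}(\alpha')^{m(v')}$ of type I (attached to an edge both of whose endpoints $v,v'$ are non-truncated), the corresponding component of $\psi_1(t)$ equals $\big(\mathrm{Tip}(g),(m(v)S_v-m(v')S_{v'})\,z\big)$, where $S_v:=\sum_{\alpha\in C_v}\lambda_\alpha$ and $z:=\pi(C_v(\alpha)^{m(v)})=\pi(C_{v'}(\alpha')^{m(v')})$ is a nonzero element of the basis $\mathcal B$; here one uses the standard facts (see \cite{SS},\cite{GL}) that $Q_1=\bigsqcup_{v}C_v$ over the non-truncated vertices, that $|C_v|=\mathrm{val}(v)$ with each arrow occurring once in its special cycle (so $n_\alpha(C_v(\alpha)^{m(v)})=m(v)$ for $\alpha\in C_v$), and that $g$ is a reduced Gröbner basis element of the displayed two-term shape. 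Consequently $t\in\mathrm{Ker}\,\psi_1$ if and only if $m(v)S_v=m(v')S_{v'}$ for every edge whose endpoints $v,v'$ are both non-truncated.

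Finally I would do the bookkeeping. Let $V_t$ (resp. $V_{nt}$) denote the set of truncated (resp. non-truncated) vertices. The map $\lambda\mapsto(S_v)_{v\in V_{nt}}$ is a surjection $k^{Q_1}\twoheadrightarrow k^{V_{nt}}$ with kernel of dimension $|Q_1|-|V_{nt}|$. Since $\mathrm{char}\,k=0$, the substitution $T_v:=m(v)S_v$ turns the above conditions into the statement that $(T_v)$ is constant on the connected components of the subgraph of $G$ on the non-truncated vertices; that subgraph is connected by the same walk argument as in the second paragraph, so the image of $W_0\cap\mathrm{Ker}\,\psi_1$ under $\lambda\mapsto(S_v)$ has dimension $1$, and $\dim_k(W_0\cap\mathrm{Ker}\,\psi_1)=|Q_1|-|V_{nt}|+1$. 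As a truncated vertex has valency $1$, the handshake lemma gives $|Q_1|=\sum_{v\in V_{nt}}\mathrm{val}(v)=2|E|-|V_t|$ and $|V_{nt}|=|V|-|V_t|$, whence $\dim_k(W_0\cap\mathrm{Ker}\,\psi_1)=2|E|-|V|+1$ and $\dim_k L_{00}=(2|E|-|V|+1)-(|E|-1)=|E|-|V|+2$. (The only case needing separate attention is the degenerate one where $G$ is a single edge with both multiplicities $1$, i.e. $A\cong k$.) I expect the real work to lie in the third paragraph — identifying exactly which components of $\psi_1(t)$ survive and evaluating the type I components correctly, including the treatment of loops, multiple edges, and the reduction of the type II relations — while the two connectivity facts and the Euler-characteristic-style count are routine.
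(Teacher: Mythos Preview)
Your proof is correct and follows essentially the same route as the paper: both compute $\dim_k(W_0\cap\mathrm{Ker}\,\psi_1)=2|E|-|V|+1$ and $\dim_k D=|E|-1$ by observing that the monomial relations contribute nothing to $\psi_1|_{W_0}$ while each type~I relation $C_v(\beta)^{m(v)}-C_{v'}(\beta')^{m(v')}$ imposes the single linear condition $m(v)S_v=m(v')S_{v'}$, and then invoking the connectedness of $G$ (respectively of $Q_G$). The only noteworthy difference is that the paper, instead of the abstract kernel/image count via the surjection $\lambda\mapsto(S_v)_{v\in V_{nt}}$, writes down an explicit basis of $W_0\cap\mathrm{Ker}\,\psi_1$ consisting of the elements $(\alpha_j,\alpha_j)-(\alpha_0,\alpha_0)$ for arrows in a common special cycle together with one ``global'' element $\sum_{v\in V^*}k_v(\alpha_v,\alpha_v)$ with $k_v=\prod_{w\neq v}m(w)$; this explicit basis is what the paper calls $S_1$ and is used verbatim in Theorem~\ref{gen-set of A} and later in Section~5, so if you intend to continue with those results you will want to extract it from your argument (it drops out immediately: the first family is a basis of the kernel of $\lambda\mapsto(S_v)$, and the second element is any preimage of the one-dimensional solution space of the conditions $T_v=T_{v'}$).
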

		
		\begin{proof}
			Let $g\in\mathcal{G}$ be an element in the reduced Gr{\"o}bner basis of $I_G$. Then $g$ is a relation of type \uppercase\expandafter{\romannumeral1} or $g$ is a monomial relation. 
			For each $\alpha\in Q_1$, fix $(\alpha,\alpha)\in Q_1//Q_1$. We study $\psi_1(\alpha,\alpha)$ through its summands  $\sum_{p\in \mathrm{Supp}(g)}c_g(p)\cdot(\mathrm{Tip}(g),\pi(p^{(\alpha,\alpha)}))$ for each $g=\sum_{p\in Q_{\geq 0}}c_g(p)p$.
			
			{\it Case 1.} If $g$ is a monomial relation, then $(g,\pi(g^{(\alpha,\alpha)}))=0$. 
			
			{\it Case 2.} If $g$ is a relation of type \uppercase\expandafter{\romannumeral1}, then $g=C_v(\beta)^{m(v)}-C_w(\gamma)^{m(w)}$ where $C_v(\beta),C_w(\gamma)$ are special cycles in the Brauer graph $G$ such that both $v$ and $w$ are not truncated. Without loss of generality, let $\mathrm{Tip}(g)=C_v(\beta)^{m(v)}$.
			
			\begin{itemize}
				\item If $\alpha\mid C_v(\beta)$ and $\alpha\nmid C_w(\gamma)$,
				$$\sum_{p\in \mathrm{Supp}(g)}c_g(p)\cdot(\mathrm{Tip}(g),\pi(p^{(\alpha,\alpha)}))=m(v)(C_v(\beta)^{m(v)},C_w(\gamma)^{m(w)});$$
				
				\item if $\alpha\nmid C_v(\beta)$ and $\alpha\mid C_w(\gamma)$,
				$$\sum_{p\in \mathrm{Supp}(g)}c_g(p)\cdot(\mathrm{Tip}(g),\pi(p^{(\alpha,\alpha)}))=-m(w)(C_v(\beta)^{m(v)},C_w(\gamma)^{m(w)});$$
				
				\item if $\alpha\mid C_v(\beta)$ and $\alpha\mid C_w(\gamma)$, then $v=w$, and
				$$\sum_{p\in \mathrm{Supp}(g)}c_g(p)\cdot(\mathrm{Tip}(g),\pi(p^{(\alpha,\alpha)}))=(m(v)-m(w))(C_v(\beta)^{m(v)},C_w(\gamma)^{m(w)})=0.$$
			\end{itemize}
			Therefore, each element in $\langle (\alpha,\alpha)|\alpha\in Q_1\rangle\cap \mathrm{Ker}\psi_1$ is a linear combination of the following two types of elements:
			\begin{itemize}
				\item $\sum_{v\in V^*}k_v(\alpha_{v},\alpha_{v})$, where $\alpha_{v}$ is an arbitrary arrow in the special cycle at $v$,
				$$V^*=\{v\in V\;|\; v \text{ is not truncated}\}\text{, and }k_v=\prod_{w\in V^*, w\neq v}m(w).$$
				
				\item $(\alpha_{j},\alpha_{j})-(\alpha_{0},\alpha_{0})$ ($j=1,\cdots,val(v)-1$), where the special cycle $C_v=\alpha_{val(v)-1}\cdots\alpha_{0}$ is fixed for each vertex $v\in V^*$;
			\end{itemize}
			For each $v\in V^*$, we have $(val(v)-1)$ elements of the second type. Although we can choose different $\alpha_v$ for each element of the first type, after reducing by the elements of the second type above, we get exactly one representative element of the first type. It is straightforward to show that these $(1+\sum_{v\in V^*}(val(v)-1))$ elements are $k$-linearly independent by simple linear algebra argument. Thus these elements form a $k$-basis of $\langle (\alpha,\alpha)|\alpha\in Q_1\rangle\cap \mathrm{Ker}\psi_1$. Furthermore, we have
			
			$$
			\begin{array}{*{3}{lll}}
				\mathrm{dim}_k(\langle (\alpha,\alpha)|\alpha\in Q_1\rangle\cap \mathrm{Ker}\psi_1) & =& 1+\sum_{v\in V^*}(val(v)-1)\\
				&=& 1+2|E|-(|V|-|V^*|)-|V^*|\\
				&=&2|E|-|V|+1.
			\end{array}
			$$
			
			Since the quiver $Q_G$ is connected, we can check that $\sum_{e\in Q_0}\psi_0(e,e)=0$ and for a proper subset $S_0$ of $Q_0$, the elements $\psi_0(e',e')$ ($e'\in S_0$) are linearly independent. (This is the content of Lemma 3.4 in \cite{LRW}.) Therefore, $$\mathrm{dim}_k(\langle \sum_{a\in Q_1e}(a,a)-\sum_{b\in eQ_1}(b,b)|e\in Q_0\rangle)=|Q_0|-1=|E|-1.$$	
			As a result, we get
			$$
			\begin{array}{*{3}{lll}}
				\mathrm{dim}_kL_{00} & =&\mathrm{dim}_k(\langle (\alpha,\alpha)|\alpha\in Q_1\rangle\cap \mathrm{Ker}\psi_1)- \mathrm{dim}_k(\langle \sum_{a\in Q_1e}(a,a)-\sum_{b\in eQ_1}(b,b)|e\in Q_0\rangle)\\
				&=& (2|E|-|V|+1)-(|E|-1)\\
				&=&|E|-|V|+2
			\end{array}
			$$\end{proof}
		
		We now describe a $k$-basis of $\mathrm{HH}^1(A)$ for any BGA, say $A$ as follows.
		
		\begin{theorem}\label{gen-set of A}
			Let $A$ be a BGA. There is a $k$-basis $\mathcal{B}_{L,Ker}$  of $\mathrm{Ker}\psi_1$ which consists of the following five subsets of $\mathrm{Ker}\psi_1$:
			\begin{itemize}
				\item $S_1$: the basis of $\langle (\alpha,\alpha)|\alpha\in Q_1\rangle\cap \mathrm{Ker}\psi_1$ described in Lemma \ref{dimL00 of A}.
				
				\item $S_2$: elements of the form $(\beta_0,\hat{\alpha_k})-(\alpha_{k},\hat{\beta_0})$ with $\alpha_{k}\neq\beta_0$, $\hat{\alpha_k}:=\alpha_{k-1}\cdots\alpha_{0}\cdot C_v(\alpha_{0})^{m(v)-1}$, $\hat{\beta_0}:=\beta_m\cdots\beta_{1}\cdot C_w(\beta_{1})^{m(w)-1}$,  $\alpha_{k}\beta_0,\beta_0\alpha_{k}\in R_3$. The corresponding Brauer graph is given by: 
				\begin{figure}[H]
					
					\centering
					\tikzset{every picture/.style={line width=0.75pt}} 
					
					\begin{tikzpicture}[x=0.75pt,y=0.75pt,yscale=-1,xscale=1]
						
						\draw  [fill={rgb, 255:red, 210; green, 210; blue, 210 }  ,fill opacity=1 ] (80,95) .. controls (80,86.72) and (86.72,80) .. (95,80) .. controls (103.28,80) and (110,86.72) .. (110,95) .. controls (110,103.28) and (103.28,110) .. (95,110) .. controls (86.72,110) and (80,103.28) .. (80,95) -- cycle ;
						\draw  [fill={rgb, 255:red, 210; green, 210; blue, 210 }  ,fill opacity=1 ] (230,95) .. controls (230,86.72) and (236.72,80) .. (245,80) .. controls (253.28,80) and (260,86.72) .. (260,95) .. controls (260,103.28) and (253.28,110) .. (245,110) .. controls (236.72,110) and (230,103.28) .. (230,95) -- cycle ;
						\draw    (107,87.5) .. controls (146,63) and (190,61) .. (233,86.5) ;
						\draw    (108,103.5) .. controls (148,124) and (189,126.5) .. (231,104) ;
						\draw    (128,103.5) .. controls (133.64,95.04) and (130.44,95.42) .. (126.72,86.36) ;
						\draw [shift={(126,84.5)}, rotate = 70.02] [color={rgb, 255:red, 0; green, 0; blue, 0 }  ][line width=0.75]    (8.74,-2.63) .. controls (5.56,-1.12) and (2.65,-0.24) .. (0,0) .. controls (2.65,0.24) and (5.56,1.12) .. (8.74,2.63)   ;
						\draw    (97,130.5) .. controls (113.24,130.5) and (114.88,128.68) .. (124.57,119.8) ;
						\draw [shift={(126,118.5)}, rotate = 137.73] [color={rgb, 255:red, 0; green, 0; blue, 0 }  ][line width=0.75]    (8.74,-2.63) .. controls (5.56,-1.12) and (2.65,-0.24) .. (0,0) .. controls (2.65,0.24) and (5.56,1.12) .. (8.74,2.63)   ;
						\draw    (122,71.5) .. controls (114.24,60.83) and (117.77,64.27) .. (98.82,57.19) ;
						\draw [shift={(97,56.5)}, rotate = 20.85] [color={rgb, 255:red, 0; green, 0; blue, 0 }  ][line width=0.75]    (10.93,-3.29) .. controls (6.95,-1.4) and (3.31,-0.3) .. (0,0) .. controls (3.31,0.3) and (6.95,1.4) .. (10.93,3.29)   ;
						\draw    (209,82.5) .. controls (205.2,94.85) and (205.91,92.76) .. (211.99,102.81) ;
						\draw [shift={(213,104.5)}, rotate = 239.74] [color={rgb, 255:red, 0; green, 0; blue, 0 }  ][line width=0.75]    (10.93,-3.29) .. controls (6.95,-1.4) and (3.31,-0.3) .. (0,0) .. controls (3.31,0.3) and (6.95,1.4) .. (10.93,3.29)   ;
						\draw    (218,115.5) .. controls (223.79,127.08) and (229.58,128.42) .. (244.36,135.68) ;
						\draw [shift={(246,136.5)}, rotate = 206.57] [color={rgb, 255:red, 0; green, 0; blue, 0 }  ][line width=0.75]    (10.93,-3.29) .. controls (6.95,-1.4) and (3.31,-0.3) .. (0,0) .. controls (3.31,0.3) and (6.95,1.4) .. (10.93,3.29)   ;
						\draw    (245,50.5) .. controls (227.54,52.44) and (227.96,53.44) .. (215.22,66.27) ;
						\draw [shift={(214,67.5)}, rotate = 315] [color={rgb, 255:red, 0; green, 0; blue, 0 }  ][line width=0.75]    (10.93,-3.29) .. controls (6.95,-1.4) and (3.31,-0.3) .. (0,0) .. controls (3.31,0.3) and (6.95,1.4) .. (10.93,3.29)   ;
						
						\draw (90,90) node [anchor=north west][inner sep=0.75pt]   [align=left] {v};
						\draw (238,90) node [anchor=north west][inner sep=0.75pt]   [align=left] {w};
						\draw (35,90) node [anchor=north west][inner sep=0.75pt]  [font=\Large] [align=left] {......};
						\draw (267,90) node [anchor=north west][inner sep=0.75pt]  [font=\Large] [align=left] {......};
						\draw (113,129) node [anchor=north west][inner sep=0.75pt]   [align=left] {$\alpha_{k-1}$};
						\draw (137,87) node [anchor=north west][inner sep=0.75pt]   [align=left] {$\alpha_k$};
						\draw (113,47) node [anchor=north west][inner sep=0.75pt]   [align=left] {$\alpha_{0}$};
						\draw (207,129) node [anchor=north west][inner sep=0.75pt]   [align=left] {$\beta_{1}$};
						\draw (189,87) node [anchor=north west][inner sep=0.75pt]   [align=left] {$\beta_0$};
						\draw (203,45) node [anchor=north west][inner sep=0.75pt]   [align=left] {$\beta_m$};
						\draw (160,57) node [anchor=north west][inner sep=0.75pt]   [align=left] {$e_1$};
						\draw (160,123) node [anchor=north west][inner sep=0.75pt]   [align=left] {$e_2$};

					\end{tikzpicture}
					\caption{The subgraph corresponding to some element in $S_2$}
					\label{S2-subgraph}
				\end{figure}
				
				\item $S_3$: elements of the form $(\beta,C_v(\alpha)^{m(v)})$ with $C_w(\beta)^{m(w)}=\beta^{m(w)}>C_v(\alpha)^{m(v)}$, where the corresponding Brauer graph is given by:  
				
				\begin{figure}[H]
					\centering
					\tikzset{every picture/.style={line width=0.75pt}} 
					\begin{tikzpicture}[x=0.75pt,y=0.75pt,yscale=-1,xscale=1]
						
						\draw    (100,108) -- (200,108) ;
						\draw  [fill={rgb, 255:red, 210; green, 210; blue, 210 }  ,fill opacity=1 ] (75.5,108) .. controls (75.5,101.23) and (80.98,95.75) .. (87.75,95.75) .. controls (94.52,95.75) and (100,101.23) .. (100,108) .. controls (100,114.77) and (94.52,120.25) .. (87.75,120.25) .. controls (80.98,120.25) and (75.5,114.77) .. (75.5,108) -- cycle ;
						\draw  [fill={rgb, 255:red, 210; green, 210; blue, 210 }  ,fill opacity=1 ] (200,108) .. controls (200,101.23) and (205.48,95.75) .. (212.25,95.75) .. controls (219.02,95.75) and (224.5,101.23) .. (224.5,108) .. controls (224.5,114.77) and (219.02,120.25) .. (212.25,120.25) .. controls (205.48,120.25) and (200,114.77) .. (200,108) -- cycle ;
						\draw    (120,99.5) .. controls (86,50.5) and (53,84.5) .. (55,111.5)(55,108) .. controls (56.98,131.27) and (85.42,159.92) .. (117.04,120.71) ;
						\draw [shift={(118,119.5)}, rotate = 127.97] [color={rgb, 255:red, 0; green, 0; blue, 0 }  ][line width=0.75]    (10.93,-3.29) .. controls (6.95,-1.4) and (3.31,-0.3) .. (0,0) .. controls (3.31,0.3) and (6.95,1.4) .. (10.93,3.29)   ;
						\draw    (212,72.5) .. controls (199.46,71.54) and (193.43,79.88) .. (189.43,95.74) ;
						\draw [shift={(189,97.5)}, rotate = 283.24] [color={rgb, 255:red, 0; green, 0; blue, 0 }  ][line width=0.75]    (10.93,-3.29) .. controls (6.95,-1.4) and (3.31,-0.3) .. (0,0) .. controls (3.31,0.3) and (6.95,1.4) .. (10.93,3.29)   ;
						\draw    (188,121.5) .. controls (193.85,137.1) and (193.05,134.64) .. (213.39,140.99) ;
						\draw [shift={(215,141.5)}, rotate = 197.65] [color={rgb, 255:red, 0; green, 0; blue, 0 }  ][line width=0.75]    (10.93,-3.29) .. controls (6.95,-1.4) and (3.31,-0.3) .. (0,0) .. controls (3.31,0.3) and (6.95,1.4) .. (10.93,3.29)   ;
						
						\draw (241,108) node [anchor=north west][inner sep=0.75pt]   [align=left] {......};
						\draw (81,105) node [anchor=north west][inner sep=0.75pt]   [align=left] {w};
						\draw (208,105) node [anchor=north west][inner sep=0.75pt]   [align=left] {v};
						\draw (38,102) node [anchor=north west][inner sep=0.75pt]   [align=left] {$\beta$};
						\draw (180,140) node [anchor=north west][inner sep=0.75pt]   [align=left] {$\alpha$};
					\end{tikzpicture}
						\caption{The subgraph corresponding to some element in $S_3$}
				\end{figure}
				
				\item $S_4$: elements of the form ($\alpha$,$p$) satisfying
				\begin{itemize}
					\item $l(p)>1$;
					\item $\psi_1(\alpha,p)=0$;
					\item there exists a special cycle $C_v$, such that $\alpha\mid C_v$, $p\mid C_v^{m(v)}$.
				\end{itemize}
				
				\item $S_5$: the basis of the subspace of $\mathrm{Im}\psi_0$ generated by all the elements $(\alpha_1,\alpha_1 p)-(\alpha_0,p\alpha_0)$ where $p$ is a cycle in $Q$ and $\psi_1((\alpha_1,\alpha_1 p)) =\psi_1((\alpha_0,p\alpha_0))\neq 0$.
			\end{itemize}
			Furthermore, $\mathrm{Im}\psi_0$ is contained in $\langle S_1\cup S_4\cup S_5\rangle$. The $k$-basis of $\mathrm{HH}^1(A)$ induced from $\mathcal{B}_{L,Ker}$ will be denoted by $\mathcal{B}_L$.
		\end{theorem}
		
		\begin{proof}
			By the definition, it is straightforward that the elements in the set $S_1\cup S_2\cup S_3\cup S_4\cup S_5$ are linearly independent and contained in $\mathrm{Ker}\psi_1$. We just verify that the elements in $S_2$ are contained in $\mathrm{Ker}\psi_1$. Without loss of generality, assume that $C_v(\alpha_0)^{m(v)}>C_w(\beta_1)^{m(w)}$ and $C_v(\alpha_k)^{m(v)}>C_w(\beta_0)^{m(w)}$. Then we have both $\alpha_{k}\beta_0$ and $\beta_0\alpha_{k}$ are in $R_3$, and
			$$\psi_1(\beta_0,\hat{\alpha_k})=(\alpha_k\beta_0,\pi(C_v(\alpha_0)^{m(v)}))+(\beta_0\alpha_k,\pi(C_v(\alpha_k)^{m(v)}))$$
			$$=(\alpha_k\beta_0,C_w(\beta_0)^{m(w)})+(\beta_0\alpha_k,C_w(\beta_1)^{m(w)}),$$
			$$\psi_1(\alpha_{k},\hat{\beta_0})=(\beta_0\alpha_k,C_w(\beta_1)^{m(w)})+(\alpha_k\beta_0,C_w(\beta_0)^{m(w)}).$$
			This shows that $(\beta_0,\alpha_{k-1}\cdots\alpha_{0}\cdot C_v(\alpha_{0})^{m(v)-1})-(\alpha_{k},\beta_m\cdots\beta_{1}\cdot C_w(\beta_{1})^{m(w)-1})\in \mathrm{Ker}\psi_1$.
			
			Now consider $t=\sum_{i\in I}c_i\cdot(\alpha_{i},p_i)\in \mathrm{Ker}\psi_1$ with $c_i\in k$, and let $(\alpha,p)=(\alpha_{i},p_i)$ be a summand of $t$. By Proposition \ref{BGA L-1}, we can assume that $(\alpha,p)\in Q_1//(\mathcal{B}\backslash Q_0)$.
			
			First we suppose that $\psi_1(\alpha,p)\neq 0$. This implies that there exists some $g\in\mathcal{G}$, $$\sum_{p'\in \mathrm{Supp}(g)}c_g(p')\cdot(\mathrm{Tip}(g),\pi(p'^{(\alpha,p)}))\neq 0.$$
			Note that $\psi_1(g,\pi(g^{(\alpha,p)}))=0$ when $g\in R_2$.
			
			If there exist $ g\in R_1\subseteq\mathcal{G}$ and $ p_0\in \mathrm{Supp}(g)$, such that $(\mathrm{Tip}(g),\pi(p_0^{(\alpha,p)}))\neq 0$, then $l(p)=1$. Since $A$ is special biserial, $\pi(p_0^{(\alpha,p)})\neq 0$ implies $\alpha=p$. By definition of $\psi_1(\alpha,\alpha)$ (see the proof of Lemma \ref{dimL00 of A} for further details), there must exist a summand of $t$, which is given by $$t'\in kS_1=\langle (\alpha,\alpha)|\alpha\in Q_1\rangle\cap \mathrm{Ker}\psi_1,$$ such that $(\alpha,\alpha)$ is a summand of $t'$.
			
			If  there exists $ g\in R_3\subseteq\mathcal{G}$, such that  $(g,\pi(g^{(\alpha,p)}))\neq 0$, then there are two cases to be considered. If $\pi(g^{(\alpha,p)})\neq g^{(\alpha,p)}$, or if $\pi(g^{(\alpha,p)})= g^{(\alpha,p)}$ and $(\alpha, p)$ is a summand of the elements in $S_2$, there will be an element in $S_2$ having a summand which is equal to $(\alpha,p)$, therefore $(\alpha,p)$ must appear in a summand $t''$ of $t$ with $t''\in kS_2$.  If $\pi(g^{(\alpha,p)})= g^{(\alpha,p)}$ and $(\alpha, p)$ is not a summand of the elements in $S_2$, without loss of generality, we can just let $g=\beta\alpha$, so $(\beta\alpha,\pi(g^{(\alpha,p)})=(\beta\alpha,\beta p)$. Then $t$ must contain a summand of the form $c\cdot((\beta,q)-(\alpha,p))$ with $\beta p=q\alpha$ and $c\in k$. This leads to $p=p_1\alpha,q=\beta p_1$ with $p_1\in Q_{\geq 1}$. Since $A$ is a special biserial algebra, we have $(\beta,q)-(\alpha,p)=\psi_0(e,p_1)$ for some $e\in Q_0$. So $((\beta,q)-(\alpha,p))\in S_5$.
			
			Next we suppose that $\psi_1(\alpha,p)=0$. We will show that in this case $(\alpha,p)\in S_3\cup S_4$. Suppose $(\alpha,p)\notin S_4$, and assume that $l(p)=1$ with $p\mid C_v$. This means that $p$ is a parallel arrow of $\alpha$. Since $A$ is special biserial, let $p'$ be the only arrow with $p'p\in\mathcal{B}$. Then we have $p'\alpha\in R_3\subseteq\mathcal{G}$. However, we have $(p'\alpha,\pi((p'\alpha)^{(\alpha,p)}))=(p'\alpha,p'p)\neq 0$, which contradicts the fact that $\psi_1(\alpha,p)=0$. Therefore, $l(p)>1$. Since $(\alpha,p)\notin S_4$, then $p\nmid C_v^{m(v)}$. But in this case, if $(\alpha,p)\notin S_3$, there will exist a relation $g\in R_3\subseteq\mathcal{G}$, such that $(g,\pi(g^{(\alpha,p)}))\neq 0$, which contradicts the fact that $\psi_1(\alpha,p)=0$.
			
			Summarizing the above discussion, we know that $t\in k(S_1\cup S_2\cup S_3\cup S_4\cup S_5)$.
		\end{proof}
		
		The basis above can help us to compute the dimension of the first Hochschild cohomology group. In a special case, we have the following formula for this dimension.
		
		\begin{corollary}\label{dim-noloop}
			Let $A$ be a BGA whose corresponding Brauer graph $G=(V,E)$ does not have loops. Then $\mathrm{dim}_k\mathrm{HH}^1(A)=|E|-2|V|+\sum_{v\in V}m(v)+|S_2|+2$.
		\end{corollary}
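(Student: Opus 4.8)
The plan is to read off $\dim_k\mathrm{HH}^1(A)$ from the explicit basis constructed in Theorem \ref{gen-set of A}. Since $\mathcal B_{L,Ker}=S_1\sqcup S_2\sqcup S_3\sqcup S_4\sqcup S_5$ is a $k$-basis of $\mathrm{Ker}\,\psi_1$ and $\mathrm{Im}\,\psi_0\subseteq\langle S_1\cup S_4\cup S_5\rangle$, the induced basis $\mathcal B_L$ of $\mathrm{HH}^1(A)=\mathrm{Ker}\,\psi_1/\mathrm{Im}\,\psi_0$ consists of (the images of) $S_2\sqcup S_3$ together with a basis of $\langle S_1\cup S_4\cup S_5\rangle/\mathrm{Im}\,\psi_0$, so that
$$\dim_k\mathrm{HH}^1(A)=|S_2|+|S_3|+|S_1|+|S_4|+|S_5|-\dim_k\mathrm{Im}\,\psi_0 .$$
It then remains to evaluate $|S_1|$, $|S_3|+|S_4|$, $|S_5|$ and $\dim_k\mathrm{Im}\,\psi_0$ for a loopless Brauer graph $G$ in terms of $|E|$, $|V|$ and the multiplicities $m(v)$, and to check that everything collapses to $|E|-2|V|+\sum_{v\in V}m(v)+|S_2|+2$.

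\textbf{Counting $S_1$, $S_5$ and $S_3\cup S_4$.} From the proof of Lemma \ref{dimL00 of A} one has $|S_1|=\sum_{v\text{ non-truncated}}(val(v)-1)+1$; since $G$ has no loops we have $\sum_{v\in V}val(v)=2|E|$ and truncated vertices satisfy $val(v)=1$, hence $|S_1|=2|E|-|V|+1$. Next, using that a Brauer graph algebra is special biserial, every cyclic path of positive length in the $\mathrm{NonTip}$-basis $\mathcal B$ is a power $C_v(\alpha)^j$ of a special cycle; for such a path one checks that $\psi_1\bigl(\alpha,\pi(\alpha C_v(\alpha)^j)\bigr)=0$ — the relations of type III contribute $0$ by biseriality, while the contributions of relations of types I and II become images of paths of length $\geq l(C_v^{\,m(v)+1})$, hence $0$ in $A$ — so the nonvanishing condition in the definition of $S_5$ is never met and $|S_5|=0$. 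Finally, $S_3\cup S_4$ contains exactly one element for each triple $(v,\alpha_i,r)$ with $v$ non-truncated, $\alpha_i$ one of the $val(v)$ arrows of the special cycle $C_v$, and $1\leq r\leq m(v)-1$, namely $(\alpha_i,\pi(\alpha_iC_v(\alpha_i)^r))$, which lies in $S_4$ unless $val(v)=1$ and the loop power $\alpha_i^{\,m(v)}$ is the Gröbner tip at its edge, in which case it is the element $(\alpha_i,\pi(\alpha_i^{\,m(v)}))$ of $S_3$; summing gives
$$|S_3|+|S_4|=\sum_{v\in V}val(v)(m(v)-1)=\sum_{v\in V}m(v)\,val(v)-2|E| .$$

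\textbf{Computing $\dim_k\mathrm{Im}\,\psi_0$ and finishing.} I would split $\mathrm{Im}\,\psi_0$ by the length of the second coordinate of $\psi_0(e,\gamma)$. The part with $\gamma=e$ a vertex is $\langle\psi_0(e,e)\mid e\in Q_0\rangle$, of dimension $|E|-1$ by the connectedness of $Q_G$ (the argument already used in the proof of Lemma \ref{dimL00 of A}). For $\gamma=C_v(\alpha)^j$ a positive-length cyclic path: if $j=m(v)$ then $\psi_0(e,\gamma)=0$ (socle representatives are central), while for a non-truncated $v$ with $val(v)\geq 2$ and fixed $1\leq j\leq m(v)-1$ the identity $\alpha_iC_v(\alpha_i)^j=C_v(\alpha_{i+1})^j\alpha_i$ exhibits the $val(v)$ vectors $\psi_0(e_i,C_v(\alpha_i)^j)$ as the cyclic consecutive differences $(\alpha_i,X_i)-(\alpha_{i-1},X_{i-1})$ of distinct basis vectors of $k(Q_1//\mathcal B)$, of rank $val(v)-1$; a valency-one vertex contributes nothing. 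Hence
$$\dim_k\mathrm{Im}\,\psi_0=(|E|-1)+\sum_{v\in V}(m(v)-1)(val(v)-1)=\sum_{v\in V}m(v)\,val(v)-\sum_{v\in V}m(v)-|E|+|V|-1 ,$$
which one may also obtain from $\dim_k\mathrm{Im}\,\psi_0=\dim_k k(Q_0//\mathcal B)-\dim_k Z(A)$ using $\dim_k k(Q_0//\mathcal B)=\sum_v m(v)\,val(v)$ and the known formula $\dim_k Z(A)=1+|E|+\sum_v m(v)-|V|$ in characteristic $0$. Substituting $|S_1|$, $|S_3|+|S_4|$, $|S_5|=0$ and this value into the displayed formula for $\dim_k\mathrm{HH}^1(A)$, the terms $\sum_v m(v)\,val(v)$ and $\pm 2|E|$ cancel and one is left with $|E|-2|V|+\sum_{v\in V}m(v)+|S_2|+2$.

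\textbf{Main obstacle.} The arithmetic at the end and the vanishing statements $|S_5|=0$ and $\psi_0(e,\mathrm{socle})=0$ are routine; the genuine work is the combinatorics of the $\mathrm{NonTip}$-basis of a Brauer graph algebra needed to count $|S_3|+|S_4|$ and the positive-length part of $\mathrm{Im}\,\psi_0$. In particular one must treat carefully the truncated and valency-one vertices, the order-dependent choice of which of the two socle representatives at an edge becomes the Gröbner tip, and the degenerate situation of parallel edges in $G$, where a single arrow could lie on two distinct special cycles and thereby threaten over-counting in the sums over $v$.
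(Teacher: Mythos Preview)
Your proposal is correct and follows essentially the same route as the paper's proof. Both arguments amount to
\[
\dim_k\mathrm{HH}^1(A)=\dim_kL_{00}+|S_2|+\sum_{v\in V}(m(v)-1),
\]
obtained by observing that modulo $\mathrm{Im}\,\psi_0$ the $val(v)$ elements $(\alpha_i,\pi(\alpha_iC_v(\alpha_i)^n))$ collapse to a single class for each pair $(v,n)$ with $1\le n\le m(v)-1$. The only difference is bookkeeping: the paper passes directly to the quotient $\langle S_3\cup S_4\cup S_5\rangle/\mathrm{Im}\,\psi_0$, while you compute $|S_3|+|S_4|$, $|S_5|=0$, and $\dim_k\mathrm{Im}\,\psi_0$ separately and then subtract. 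Your extra explicitness (in particular the verification that $|S_5|=0$ and the alternate derivation of $\dim_k\mathrm{Im}\,\psi_0$ via $\dim_k Z(A)$) is harmless and arguably clarifying. One remark on your ``main obstacle'': the fear that a single arrow could lie on two distinct special cycles is unfounded, since by construction each arrow of $Q_G$ belongs to the special cycle of exactly one vertex of $G$; the remaining concerns about truncated and valency-one vertices are handled correctly in your count.
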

		
		\begin{proof}
			We first identify the element in $S_4$. For $(\alpha,p)\in S_4$ with $v\in V$, since the Brauer graph $G$ does not have loops, the starting arrow and the ending arrow of $p$ are all $\alpha$, that means $p=\alpha C_v(\alpha)^n$, with $s(C_v(\alpha))=t(C_v(\alpha))=e$ and $1\leq n<m(v)$. On the other hand, when $val(v)=1$, we have $C_v(\alpha)=\alpha$, so $(\alpha,\pi(\alpha C_v(\alpha)^n))=(\alpha,\pi(\alpha^{n+1}))$ will also contain the elements in $S_3$. That means $$S_3\cup S_4=\{(\alpha_i,\pi(\alpha_i C_v(\alpha_i)^n))\ |\ 1\leq i\leq val(v), 1\leq n<m(v) \}.$$ Since these elements also appears in $\psi_0(e,C_v(\alpha)^k)$, all the elements of the form $(\alpha_i,\pi(\alpha_i C_v(\alpha_i)^n))$ (where $n$ is same and all $\alpha_i$ ($1\leq i\leq val(v$)) are in same special cycle) will become a unique element in $\mathrm{HH}^1(A)$ reduced by the elements in $\mathrm{Im}\psi_0$. That means the number of elements in $S_3\cup S_4$ is determined by the multiplicities of vertices. Since the number of elements in $S_1$ is given by Lemma \ref{dimL00 of A}, to sum it up, we have 
			$$
			\begin{array}{*{3}{lll}}
				\mathrm{dim}_k\mathrm{HH}^1(A)& =&|S_1|+|S_2|+|S_3|+|S_4|+|S_5|-|\mathrm{Im}\psi_0|\\
				&=& \mathrm{dim}_kL_{00}+|S_2|+\sum_{v\in V}(m(v)-1)\\
				&=&|E|-2|V|+\sum_{v\in V}m(v)+|S_2|+2.
			\end{array}
			$$
		\end{proof}
		
		Another consequence of Theorem \ref{gen-set of A} is the following corollary, which has recently also obtained by Rubio y Degrassi, Schroll and Solotar in \cite[~Theorem~4.2]{RSS} using different method (see also remarks before Example \ref{counter-example}).	
		
		\begin{corollary}\label{A-solvable}
			Let $A$ be a BGA over a field of characteristic zero such that the corresponding Brauer graph $G$ is different from	
			$(\begin{tiny}	
				\begin{tikzcd}
					\bullet & \bullet
					\arrow[shift left=1, no head, from=1-1, to=1-2]
					\arrow[shift right=1, no head, from=1-1, to=1-2]
				\end{tikzcd}
			\end{tiny})$ (in this case, both vertices have multiplicity $1$). Then $\mathrm{HH}^1(A)$ is solvable.
		\end{corollary}
		
		\begin{proof}
			Let $L:=\mathrm{HH}^1(A)$. Since there is a $k$-basis of $L$ induced from $\mathcal{B}_{L,Ker}$ by Theorem \ref{gen-set of A}. Since $S_5\subseteq \mathrm{Im}\psi_0$, we have $L\subseteq \langle S_1\cup S_2\cup S_3\cup S_4\rangle$.
			\begin{itemize}
				\item Firstly, we show the derived Lie subalgebra $L^{(1)}=[L,L]\subseteq \langle S_2\cup S_3\cup S_4\rangle$. Since $L_{00}$ is abelian and $(\alpha_i,\beta_j),(\beta_l,\alpha_k)$ will only appear in $S_2$ in the form of $(\alpha_i,\beta_j)-(\beta_l,\alpha_k)$, $\alpha_i,\alpha_k,\beta_j,\beta_l\in Q_1$, that means $grd(v)=grd(w)=2$, in other words,  $G=	
				(\begin{tiny}	
					\begin{tikzcd}
						\bullet & \bullet
						\arrow[shift left=1, no head, from=1-1, to=1-2]
						\arrow[shift right=1, no head, from=1-1, to=1-2]
					\end{tikzcd}
				\end{tiny})$ or
				$(\begin{tiny}		
					\begin{tikzcd}
						{\bullet[2]} & {\bullet[2]}
						\arrow[no head, from=1-1, to=1-2]
				\end{tikzcd}\end{tiny})$ or $(\begin{tiny}		
					\begin{tikzcd}
						\bullet \arrow[loop, distance=2em, in=35,no head, out=325]
				\end{tikzcd}\end{tiny})$. Then if  $G\neq	
				(\begin{tiny}	
					\begin{tikzcd}
						\bullet & \bullet
						\arrow[shift left=1, no head, from=1-1, to=1-2]
						\arrow[shift right=1, no head, from=1-1, to=1-2]
					\end{tikzcd}
				\end{tiny})$, all the elements in $S_1$ will not appear in $L^{(1)}$.

				\item Secondly, we show the derived Lie subalgebra $L^{(2)}=[L^{(1)},L^{(1)}]\subseteq \langle S_3\cup S_4\rangle$. For each $$x=(\beta_0,\hat{\alpha_k})-(\alpha_{k},\hat{\beta_0}),\;y=(\beta_0',\hat{\alpha_k}')-(\alpha_{k}',\hat{\beta_0}')\in S_2,$$ suppose the corresponding special cycles are $C_v,C_w,C_{v'},C_{w'}$, with $\alpha_{k},\hat{\alpha_k}\mid C_v$, $\beta_0,\hat{\beta_0}\mid C_w$, $\alpha_{k}',\hat{\alpha_k}'\mid C_{v'}$, $\beta_0',\hat{\beta_0}'\mid C_{w'}$. Without loss of generality, assume $x\neq y$. If there exists, for example, $\pi(\hat{\beta_0}^{(\beta_0',\hat{\alpha_k}')})\neq 0$, then $C_w=C_{w'}$ and, $C_v=C_w$ or $l(\hat{\beta_0})=1$.
				
				\begin{itemize}
					\item If $l(\hat{\beta_0})=1$, then $grd(w)=2$. This leads the Brauer graph $G$ to be  $(\begin{tiny}	
						\begin{tikzcd}
							\bullet & \bullet
							\arrow[shift left=1, no head, from=1-1, to=1-2]
							\arrow[shift right=1, no head, from=1-1, to=1-2]
						\end{tikzcd}
					\end{tiny})$, which contradicts the hypothesis in this corollary.
					
					\item If $C_v=C_w$, then $l(\hat{\beta_0})=l(\hat{\alpha_k})=grd(w)-1$. When $l(\hat{\beta_0})=1$, the argument follows identically to the previous case. When $l(\hat{\beta_0})>1$, since $\pi(\hat{\beta_0}^{(\beta_0',\hat{\alpha_k})})\neq 0$, we have $$l(\hat{\beta_0}^{(\beta_0',\hat{\alpha_k})})=l(\hat{\beta_0})-1+l(\hat{\alpha_k})=2grd(w)-3\leq grd(w).$$ Thus $grd(w)\leq 3$. Since $w$ is not truncated, if $grd(w)=2$, then $G=(\begin{tiny}		
						\begin{tikzcd}
							\bullet \arrow[loop, distance=2em, in=35,no head, out=325]
					\end{tikzcd}\end{tiny})$, which has been discussed in the first case. If $grd(w)= 3$, however, $\beta_0, \alpha_{k},\beta_0'$ are different arrows in special cycle $C_w$, with $s(\beta_0)=e(\alpha_k),s(\alpha_k)=e(\beta_0)$. This leads $val(w)$ bigger than  $3$. A contradiction.
				\end{itemize}
				
				Therefore, $[S_2,S_2]=0$. 
				
				\qquad For $x=(\beta_0,\hat{\alpha_k})-(\alpha_{k},\hat{\beta_0})\in S_2$, $y\in S_3$, if $\pi(\hat{\beta_0}^y)\neq 0$, then $l(\hat{\beta_0})=1$. That also means that $\beta_0,\alpha_{k},\hat{\beta_0}$ are loops. This implies that $G$ is a Brauer graph consisting of precisely one edge. If $G=(\begin{tiny}		
					\begin{tikzcd}
						\bullet \arrow[loop, distance=2em, in=35,no head, out=325]
				\end{tikzcd}\end{tiny})$, the corresponding BGA is $A=k\langle x,y\rangle/\langle xy-yx,x^2,y^2\rangle$, and the basis of its first Hochschild cohomology group is $\{(x,x)+(y,y)\}$, hence $\mathrm{HH}^1(A)$ is solvable. If $G=(\begin{tiny}		
					\begin{tikzcd}
						{\bullet[m]} & {\bullet[2]}
						\arrow[no head, from=1-1, to=1-2]
				\end{tikzcd}\end{tiny})$ with $m>1$, then the corresponding BGA is given by	$A=k\langle x,y\rangle/\langle x^m-y^2,x^{m+1},y^3,xy,yx\rangle$. Thus $S_2=\{(x,y)-(y,x^{m-1})\}$ and $S_3=\{(x,y^2)\}$. When $m=2$, we have that $[S_2,S_3]\subseteq \langle S_3\rangle$. Otherwise, $[S_2,S_3]=0$.
				
				\qquad For $x=(\beta_0,\hat{\alpha_k})-(\alpha_{k},\hat{\beta_0})\in S_2$, $y=(\alpha,p)\in S_4$. By the definition of $S_4$, we have $l(p)>1$. This implies that either $\pi(\hat{\alpha_k}^y)=0$ or $\pi(\hat{\beta_0}^y)=0$. Thus, $[x,y]$ admits no summands in $S_2$, and therefore $[S_2,S_4]\subseteq \langle S_3\cup S_4\rangle$. 
				
				\qquad By the definition of $S_3$ and $S_4$, for all $(\beta,C_v^{m(v)}(\alpha))$, $l(C_v^{m(v)}(\alpha))=grd(v)$. Indeed, for all $(\alpha,p)\in S_4$, $l(p)>1$. Thus, $[S_3,S_3]=[S_3,S_4]=0$. Since the length of every summand of the non monomial relations is equal to graded degree of some vertex, we have $[S_4,S_4]\subseteq \langle S_3\cup S_4\rangle$. 
				
				\qquad Therefore, by the discussion above, we have $L^{(2)}\subseteq \langle S_3\cup S_4\rangle$. 
				
				\item Finally, we construct some $N\in\mathbb{N}$, such that the derived Lie subalgebra $\langle S_3\cup S_4\rangle ^{(N)}=0$. Let $S$ be a subspace of $k(Q_1//\mathcal{B})$. Denote by $$l(S):=min\{l(p)|\text{$(\alpha,p)$ is a summand of some elements in $S$}\}.$$ Denote by $B:=\langle S_3\cup S_4\rangle\subseteq k(Q_1//\mathcal{B})$, then we have $l(B)>1$. If $B^{(1)}\neq 0$, we can check that $l(B^{(1)})>l(B)$. By induction, if for all $n\in\mathbb{N}$, $B^{(n)}\neq 0$, there exists $N\in \mathbb{N}$, such that $l(B^{(N)})>M:=max\{grd(v)+1|v\in V\}$. However, $Q_{\geq M}\subseteq I_G$. That leads $B^{(N)}=0$.
			\end{itemize}	
			Therefore, $L^{(N+2)}=0$, which means that $L$ is solvable. 
		\end{proof}
		
		\begin{remark}
			Although the discussion in Lemma \ref{dimL00 of A} needs the condition that $\mathrm{char}(k)=0$, the conclusions of Theorem \ref{gen-set of A} and Corollary \ref{A-solvable} only need that the field $k$ satisfies condition $(*)$ for $A$. Under this assumption, if $A$ is a BGA such that the corresponding Brauer graph $G$ is different from	
				$(\begin{tiny}	
					\begin{tikzcd}
						\bullet & \bullet
						\arrow[shift left=1, no head, from=1-1, to=1-2]
						\arrow[shift right=1, no head, from=1-1, to=1-2]
					\end{tikzcd}
				\end{tiny})$ (in this case, both vertices have multiplicity $1$), then $\mathrm{HH}^1(A)$ is solvable. In particular, when the Brauer graph $G$ has trivial multiplicity, Corollary \ref{A-solvable} is reduced to \cite[Theorem 5.4]{CSS}.
		\end{remark}

		\subsection{Some examples of non-solvable Lie algebras as the first Hochschild cohomology groups of BGAs}
		
		The case excluded in Corollary \ref{A-solvable} provides an example of a non-solvable Lie algebra $\mathrm{HH}^1(A)$ in characteristic zero. This special case is also discussed in \cite[Theorem 5.4]{CSS}, using a different method.
		
		\begin{example} \label{exception}
			The BGA which is isomorphic to the trivial extension of the Kronecker algebra is given by
			
			$$
			\begin{tikzcd}
				{G:} & {\bullet[1]} & {\bullet[1]} && {Q_G:} & 1 && 2.
				\arrow[shift left=1, no head, from=1-2, to=1-3]
				\arrow[shift right=1, no head, from=1-2, to=1-3]
				\arrow["{\beta_2}"', curve={height=6pt}, from=1-6, to=1-8]
				\arrow["{\alpha_2}", curve={height=-6pt}, from=1-6, to=1-8]
				\arrow["{\beta_1}", curve={height=-18pt}, from=1-8, to=1-6]
				\arrow["{\alpha_1}"', curve={height=18pt}, from=1-8, to=1-6]
			\end{tikzcd}$$
			The corresponding BGA of the Brauer graph $G$ is $A=kQ_G/I_G$, where $I_G$ is generated by
			\begin{itemize}
				\item $R_1=\{\alpha_1\alpha_2-\beta_1\beta_2,\; \alpha_2\alpha_1-\beta_2\beta_1\}$,
				
				\item $R_2=\{\alpha_1\alpha_2\alpha_1,\;\alpha_2\alpha_1\alpha_2,\;\beta_1\beta_2\beta_1,\;\beta_2\beta_1\beta_2\}$,
				
				\item $R_3=\{\alpha_1\beta_2,\;\beta_2\alpha_1,\;\alpha_2\beta_1,\;\beta_1\alpha_2\}$.
			\end{itemize}
			The length-lexicographic order is induced by $\alpha_1>\alpha_2>\beta_1>\beta_2>e_1>e_2$, then
			
			$$
			\begin{array}{*{9}{lllllllll}}
				\psi_0& :&(e_1,e_1)&\mapsto&(\alpha_2,\alpha_2)-(\alpha_1,\alpha_1)+(\beta_2,\beta_2)-(\beta_1,\beta_1)&, &(e_1,\alpha_1\alpha_2)&\mapsto&0\\
				&&(e_2,e_2)&\mapsto&(\alpha_1,\alpha_1)-(\alpha_2,\alpha_2)+(\beta_1,\beta_1)-(\beta_2,\beta_2)&, &(e_2,\alpha_2\alpha_1)&\mapsto&0\\
				\psi_1& :&(\alpha_1,\alpha_1),(\alpha_2,\alpha_2)&\mapsto&(\alpha_1\alpha_2,\beta_1\beta_2)+(\alpha_2\alpha_1,\beta_2\beta_1)&&&&\\ &&(\beta_1,\beta_1),(\beta_2,\beta_2)&\mapsto&-(\alpha_1\alpha_2,\beta_1\beta_2)-(\alpha_2\alpha_1,\beta_2\beta_1)&&&&\\
				&&(\alpha_1,\beta_1),(\beta_2,\alpha_2)&\mapsto&(\alpha_1\beta_2,\beta_1\beta_2)+(\beta_2\alpha_1,\beta_2\beta_1)&&&&\\&&(\beta_1,\alpha_1),(\alpha_2,\beta_2)&\mapsto&(\beta_1\alpha_2,\beta_1\beta_2)+(\alpha_2\beta_1,\beta_2\beta_1)
			\end{array}
			$$
			Then we get $$\mathrm{HH}^1(A)=k\{(\alpha_1,\alpha_1)-(\alpha_2,\alpha_2),\; (\alpha_1,\alpha_1)+(\beta_1,\beta_1),\; (\alpha_1,\beta_1)-(\beta_2,\alpha_2),\; (\alpha_2,\beta_2)-(\beta_1,\alpha_1)\}.$$ 
			The Lie bracket of any two elements is always zero except in the following cases:
			$$
			\begin{array}{*{3}{lllll}}
				\lbrack(\alpha_1,\beta_1)-(\beta_2,\alpha_2),\;(\alpha_1,\alpha_1)-(\alpha_2,\alpha_2)\rbrack&= &(\alpha_1,\beta_1)-(\beta_2,\alpha_2)\\
				\lbrack(\alpha_1,\alpha_1)-(\alpha_2,\alpha_2),\;(\alpha_2,\beta_2)-(\beta_1,\alpha_1)\rbrack&= &(\alpha_2,\beta_2)-(\beta_1,\alpha_1) \\
				\lbrack (\alpha_1,\beta_1)-(\beta_2,\alpha_2),\; (\alpha_2,\beta_2)-(\beta_1,\alpha_1)\rbrack&=&-(\beta_1,\beta_1)-(\alpha_2,\alpha_2)+(\beta_2,\beta_2)+(\alpha_1,\alpha_1)\\&=&2((\alpha_1,\alpha_1)-(\alpha_2,\alpha_2))
			\end{array}
			$$
			Therefore, $\mathrm{HH}^1(A)\cong k\oplus sl_2(k)$ which is not solvable if and only if $\mathrm{char}(k)\neq 2$. 
			
			Moreover, since there is a multiple edge in $G$ and there are two elements $(\alpha_1,\beta_1)-(\beta_2,\alpha_2)$ and $(\alpha_2,\beta_2)-(\beta_1,\alpha_1)$ in $S_2$,  we have $|S_2|=2$. By Proposition \ref{dim-noloop}, $\mathrm{dim}_k\mathrm{HH}^1(A)=2-2\cdot 2+(1+1)+2+2=4$.
		\end{example}

		In general, when the characteristic of the field is positive, there are many other examples that are not solvable. Note that the following example is a counter-example of \cite[~Theorem~4.2]{RSS} in positive characteristic. In fact, \cite[~Theorem~4.2]{RSS} misses the following not solvable case:  $G=(\begin{tiny}		
			\begin{tikzcd}
				{\bullet[m]} & {\bullet[1]}
				\arrow[no head, from=1-1, to=1-2]
		\end{tikzcd}\end{tiny})$ with $m>1$ and $\mathrm{char}(k)\;|\;(m+1)$. This mistake is caused by missing to consider the relations of the form $r=\alpha^{m+1}$ for $m\geq 2$.
		
		\begin{example} \label{counter-example}
			Let $\mathrm{char}(k)=3$.
			$$
			\begin{tikzcd}
				G: & \bullet[2] \arrow[r, no head] & \bullet[1] & &Q_G: & \bullet \arrow["x"', loop, distance=2em, in=35, out=325]
			\end{tikzcd}$$
			The corresponding BGA is $A=k\langle x\rangle/\langle x^3\rangle$. 
			$$
			\begin{array}{*{13}{lllllllllllll}}
				\psi_0& :&(1,1)&\mapsto&0&, &(1,x)&\mapsto&0&, &(1,x^2)&\mapsto&0;\\
				\psi_1& :&(x,1)&\mapsto&3(x^3,x^2)=0& ,&(x,x)&\mapsto&0&, &(x,x^2)&\mapsto& 0.
			\end{array}
			$$
			Then we get $\mathrm{HH}^1(A)=k\{(x,1),(x,x),(x,x^2)\}$. The Lie bracket on $\mathrm{HH}^1(A)$ is
			$$
			\begin{array}{*{3}{lllll}}
				\lbrack(x,1),(x,x^2)\rbrack&= &(x,(x^2)^{(x,1)})=2(x,x)\\
				\lbrack (x,1),(x,x)\rbrack&= &(x,1) \\
				\lbrack(x,x^2),(x,x)\rbrack&=&(x,x^2)-(x,(x^2)^{(x,x)})=-(x,x^2)
			\end{array}
			$$
			Therefore, $\mathrm{HH}^1(A)\cong sl_2(k)$ is a simple Lie algebra.
		\end{example}

		\section{The associated graded algebras of BGAs}
		
		For a finite dimensional algebra $A$, one can construct a graded algebra $gr(A)$ associated with the radical filtration of $A$. In \cite{GL}, Guo and Liu compared the representation types between a BGA and its associated graded algebra. In this section, we compare the first Hochschild
		cohomology groups between them.
		
		\subsection{The Lie algebra structure on $\mathrm{HH}^1(gr(A))$}
		
		\begin{definition}
			Let $A$ be a finite dimensional algebra. Denote by $\mathfrak{r}$ the (Jacobson) radical of $A$. Then the graded algebra $gr(A)$ of $A$ associated with the radical filtration is defined as follows. As a graded vector space,
			$$gr(A)=A/\mathfrak{r}\oplus\mathfrak{r}/\mathfrak{r}^2\oplus\cdots\oplus\mathfrak{r}^t/\mathfrak{r}^{t+1}\oplus\cdots.$$
			The multiplication of $gr(A)$ is given as follows. For any two homogeneous elements:
			$$x+\mathfrak{r}^{m+1}\in\mathfrak{r}^m/\mathfrak{r}^{m+1},\quad y+\mathfrak{r}^{n+1}\in\mathfrak{r}^n/\mathfrak{r}^{n+1},$$
			we have $$(x+\mathfrak{r}^{m+1})\cdot(y+\mathfrak{r}^{n+1})=xy+\mathfrak{r}^{m+n+1}.$$
		\end{definition}
		
		We now specialize $A$ to be a BGA associated with a Brauer graph $G=(V,E)$. Recall that the associated $gr(A)$ can be described as follows.
		
		\begin{lemma}$(\cite[~Lemma~2.9]{GL})$\label{relations of grA}
			Let $A=kQ/I$ be a BGA. The generating relations of the second and the third types in $I$ are given by paths, relation of the first type is of the form $\rho=p-q$, where $p$ and $q$ are two paths with $s(p)=t(p)=s(q)=t(q)$. For any relation $\rho=p-q$ of first type (suppose that the length of $p$ is $m$ and the length of $q$ is $n$), we replace it by
			$$\rho'= \left\{
			\begin{array}{*{3}{lll}}
				\rho& , & m=n,\\
				q& , & m>n,\\
				p&,&m<n.
			\end{array}
			\right.$$
			Then, the associated graded algebra $gr(A)$ is isomorphic to $kQ/I'$, where $Q$ is the same quiver as above and $I'$ is an admissible ideal whose generating relations are obtained from that of $I$ by replacing each $\rho$ by $\rho'$.
		\end{lemma}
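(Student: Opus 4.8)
The plan is to realize $gr(A)$ as a quotient $kQ/gr(I_G)$ by the associated graded ideal, and then to pin down $gr(I_G)$ as $I'$ by a dimension comparison, using the explicit Gr\"obner basis of $I_G$ furnished by Theorem \ref{Grobner basis of BGA}.

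First I would record the standard preliminaries. Let $J\subseteq kQ$ be the arrow ideal. Since $I_G$ is admissible ($J^N\subseteq I_G\subseteq J^2$ for some $N$), the radical of $A$ is $\mathfrak{r}=J/I_G$, so the radical filtration of $A$ is exactly the image of the $J$-adic filtration of $kQ$; consequently $gr(A)\cong kQ/gr(I_G)$ as graded algebras, where $gr(I_G)=\bigoplus_t\big((I_G\cap J^t)+J^{t+1}\big)/J^{t+1}$ is the graded two-sided ideal spanned by the lowest-degree homogeneous components of the elements of $I_G$. Thus the claim is equivalent to $gr(I_G)=I'$. The inclusion $I'\subseteq gr(I_G)$ is immediate: the type II and type III generators are homogeneous monomials lying in $I_G$, and for a type I relation $\rho=p-q$ the replacement $\rho'$ is, up to sign, the lowest-degree homogeneous component of $\rho$; all of these lie in $gr(I_G)$, which is an ideal. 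Hence there is a surjection of graded algebras $\Phi\colon kQ/I'\twoheadrightarrow gr(A)$, and it remains to prove $\dim_k kQ/I'\le\dim_k gr(A)=\dim_k A$.

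For the dimension bound I would argue as follows. Put $\mathcal{G}=R_1\cup R_2\cup R_3$, a Gr\"obner basis of $I_G$ by Theorem \ref{Grobner basis of BGA}, and let $\mathcal{G}'=R_2\cup R_3\cup\{\rho':\rho\in R_1\}$ be the corresponding generating set of $I'$. The set of paths divisible by no element of $\mathrm{Tip}(\mathcal{G}')$ spans $kQ/I'$ (the usual reduction using that $>$ is a well-order), so it suffices to show this set is in bijection with $\mathrm{NonTip}(I_G)$, which is a $k$-basis of $A$. Now $\mathrm{Tip}(\mathcal{G}')$ is obtained from $\mathrm{Tip}(\mathcal{G})$ by deleting, for each type I relation $\rho=p-q$ with $l(p)\ne l(q)$ (say $l(p)>l(q)$), the longer special-cycle power $p$ and inserting the shorter one $q$. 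The crux is two facts about this single swap. (i) In $A$ the path $q$ spans the socle at the relevant edge: the key computation is to read off from the definitions of the type II and type III relations (together with special biseriality, the valency-one loop configurations being handled via the $R_2$-relation of the form $\alpha^{m+1}$) that $\gamma q=0=q\gamma$ in $A$ for every arrow $\gamma$; hence the only path divisible by $q$ that lies in $\mathrm{NonTip}(I_G)$ is $q$ itself. (ii) Symmetrically, any path that \emph{properly} contains $p$ is already divisible by an element of $R_2\cup R_3$ (after $p$ one is forced either to re-enter the special cycle, producing a type II tip, or to leave it, producing a type III tip), while $p$ itself is divisible by no element of $\mathrm{Tip}(\mathcal{G}')$. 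Since the special cycles attached to distinct non-truncated edges are arrow-disjoint, the swaps for the various unbalanced type I relations do not interfere, and matching $q\leftrightarrow p$ on the affected paths and the identity elsewhere gives the desired bijection. Therefore $\dim_k kQ/I'\le\dim_k A$, so equality holds throughout, $\Phi$ is an isomorphism of graded algebras, and $gr(A)\cong kQ/I'$ with $I'$ as described.

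The main obstacle is the last step, and within it the bookkeeping behind facts (i)--(ii): correctly extracting from the definition of $I_G$ that the shorter special-cycle power of an unbalanced type I relation really is a socle element, and treating cleanly the loop edges and the handful of one-edge Brauer graphs, where the type I and type II relations can essentially coincide up to the length bookkeeping. Everything else is the standard associated-graded formalism for $kQ/I$ together with the already-established Gr\"obner basis.
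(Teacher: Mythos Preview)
The paper does not supply its own proof of this lemma: it is quoted verbatim from \cite[Lemma~2.9]{GL} and used as a black box. So there is no ``paper's proof'' to compare against; your proposal is an independent argument filling in what the paper omits.

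Your strategy is sound and essentially complete. The reduction to $gr(A)\cong kQ/gr(I_G)$ via the radical/arrow-ideal identification is standard, the inclusion $I'\subseteq gr(I_G)$ is immediate, and the dimension bound via the Gr\"obner basis from Theorem~\ref{Grobner basis of BGA} is the right finishing move. Your claims (i) and (ii) are correct: for an unbalanced type~I relation $\rho=p-q$ (say $l(p)>l(q)$), the edge is necessarily not a loop in $G$, so $C_v$ and $C_{v'}$ share no arrows; extending $q$ (resp.\ $p$) by one arrow on either side forces either a type~II subpath (if you re-enter the same special cycle) or a type~III subpath (if you leave it), exactly as you describe. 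The swap $q\leftrightarrow p$ then gives the desired bijection between $\mathrm{NonTip}(I_G)$ and the set of paths avoiding $\mathrm{Tip}(\mathcal{G}')$.

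One phrasing should be tightened. You write ``the special cycles attached to distinct non-truncated edges are arrow-disjoint'', but special cycles are attached to \emph{vertices} of $G$, and two distinct edges sharing a vertex $v$ share the cycle $C_v$. What you actually need (and what holds) is weaker: for distinct unbalanced edges $i\neq j$, the paths $p_i,q_i,p_j,q_j$ are closed paths in $Q$ based at the distinct $Q$-vertices $i$ and $j$, and whenever two of them lie in the same special cycle they have equal length; hence none is a proper subpath of another, and the individual swaps do not interfere. With that correction the non-interference step goes through, and your proof is complete.
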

		
		\begin{remark}
			It can be checked that the generating set of $I'$ described in Lemma \ref{relations of grA} is a Gr{\"o}bner basis of $I'$ in $kQ$, which also may not be tip-reduced.
		\end{remark}
		
		The following notion of unbalanced edges in a Brauer graph is useful in our later discussions.
		
		\begin{definition}$(\cite[~Definition~2.12]{GL})$
			Let $G=(V,E)$ be a Brauer graph with graded degree function $grd$ and $A=kQ/I$ the corresponding BGA. We identify $Q_0$ with $E$ by the natural bijection between them.
			\begin{itemize}
				\item We call an edge $v_1\stackrel{i}{-}v_2$ in $G$ with $grd(v_1)\neq grd(v_2)$ an unbalanced edge, and denote the end points of $i$ by $v_L^{(i)},v_S^{(i)}$,  with $grd(v_L^{(i)})>grd(v_S^{(i)})$. Whenever the context is clear, we will omit the superscript $(i)$. Other edges which are not unbalanced will be called the balanced edges.				
				\item For any unbalanced edge $v_S\stackrel{i}{-}v_L$  in $G$, there is a relation of the first type $\rho_i=p_i-q_i$ in $I$, where $p_i=C_{v_S}^{m(v_S)},q_i=C_{v_L}^{m(v_L)}$ are two paths with lengths $grd(v_S),grd(v_L)$, respectively.
                \item We denote by $\mathbb{W}$ the set of unbalanced edges in $G$.      
			\end{itemize}
		\end{definition}
			
		In order to deal with the Lie algebra structure on the first Hochschild cohomology group of $gr(A)$, we introduce the notion of the balanced components of a Brauer graph.
		
		\begin{definition}
			Let $G$ be a Brauer graph. We denote by $G'$ the graph obtained from $G$ by the following rule: split each unbalanced edge in $G$ into two edges by attaching two new truncated vertices. We define the balanced components of $G$ to be the connected components of $G'$. Denote the set of the balanced components of $G$ by $\Gamma_G$.
		\end{definition}
		An example of a Brauer graph which has two balanced components is given by Figure \ref{Ba.Comp} (where the edge $e$ splits into two edges $e'$ and $e''$). Note that in general, there is no direct relation between the number of elements in $\Gamma_G$ and the number of unbalanced edges in $G$. For example, a Brauer graph which consists of two vertices and $n$ parallel unbalanced edges always has two balanced components.
		\begin{figure}[H]
		$$
		\begin{tikzcd}
			&                                               & {[2]} \arrow[r, no head] & {[1]} \arrow[no head, loop, distance=2em, in=125, out=55] \arrow[r, no head] & {[1]} &                                                                               & {[1]} \arrow[r,"e'", no head]                     & {[2]} \arrow[r, no head] & {[1]} \arrow[r, no head] \arrow[no head, loop, distance=2em, in=125, out=55] & {[1]} \\
			{[1]} \arrow[r, no head] \arrow[no head, loop, distance=2em, in=215, out=145] & {[1]} \arrow[rd, no head] \arrow[ru,"e", no head] &                          & {} \arrow[r, Rightarrow]                                                     & {}    & {[1]} \arrow[r, no head] \arrow[no head, loop, distance=2em, in=215, out=145] & {[1]} \arrow[r,"e''", no head] \arrow[rd, no head] & {[1]}                    &                                                                              &       \\
			&                                               & {[3]}                    &                                                                              &       &                                                                               &                                              & {[3]}                    &                                                                              &
		\end{tikzcd}$$
		\caption{A Brauer graph with two balanced components}
		\label{Ba.Comp}
	\end{figure}
		
		Note that when considered a balanced component of $G$ as a Brauer graph, the corresponding BGA and its associated graded algebra are the same (c.f. \cite[Propostion~2.13]{GL}). 

We are ready to study the Lie structure on $\mathrm{HH}^1(gr(A))$ under the assumption that the characteristic of the field $k$ is $0$. Recall the definition of the maps $\psi_0$, $\psi_1$ from Proposition \ref{gen-parallel paths} and denote these maps for $gr(A)$ by $\psi_0^{gr(A)}$, $\psi_1^{gr(A)}$ respectively.
		Let $$L_{0}^{gr(A)}:=k(Q_1//Q_1)\cap \mathrm{Ker}\psi_1^{gr(A)}\bigg/ \langle\sum_{a\in Q_1e}(a,a)-\sum_{b\in eQ_1}(b,b)|e\in Q_0\rangle,$$ which is a Lie subalgebra of $\mathrm{HH}^1(gr(A))$.
		Furthermore, we can consider $L_{00}^{gr(A)}$ which is given by
		$$L_{00}^{gr(A)}:=\langle(\alpha,\alpha)|\alpha\in Q_1\rangle\cap \mathrm{Ker}\psi_1^{gr(A)}\bigg/ \langle\sum_{a\in Q_1e}(a,a)-\sum_{b\in eQ_1}(b,b)|e\in Q_0\rangle.$$
		Note that $L_{00}^{gr(A)}$ is an abelian Lie subalgebra of $\mathrm{HH}^1(gr(A))$ and $L_{00}^{gr(A)}\subseteq L_0^{gr(A)}$.
		
		\begin{lemma}\label{dimL00 of grA}
			Let $A$ be a BGA associated with a Brauer graph $G=(V,E)$ and $gr(A)$ the associated graded algebra of $A$. Then
			$$\mathrm{dim}_k L_{00}^{gr(A)}=|E|-|V|+1+|\Gamma_G|.$$
		\end{lemma}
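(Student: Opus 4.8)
The plan is to run the proof of Lemma~\ref{dimL00 of A} for $gr(A)$, paying attention to which generating relations survive as binomial relations. By Lemma~\ref{relations of grA}, the first-type relation $\rho_i=C_{v_S}^{m(v_S)}-C_{v_L}^{m(v_L)}$ attached to an edge $i$ stays binomial in the Gr\"obner basis of $I'$ exactly when $grd(v_S)=grd(v_L)$, i.e.\ when $i$ is balanced; if $i$ is unbalanced it is replaced by a monomial. Since for any monomial relation $g$ and any $\alpha\in Q_1$ one has $\pi(g^{(\alpha,\alpha)})=(\#\text{ occurrences of }\alpha\text{ in }g)\cdot\pi(g)=0$, only the binomial first-type relations contribute to $\psi_1^{gr(A)}((\alpha,\alpha))$; these correspond precisely to the balanced edges whose two endpoints are non-truncated (recall that every edge with a truncated endpoint is balanced, since a truncated vertex has the same graded degree as its unique neighbour, so an unbalanced edge lies entirely in $V^\ast$).

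Next I would carry over the two families of kernel elements from Lemma~\ref{dimL00 of A}. For each $v\in V^\ast$ and each pair of distinct arrows $\alpha_0,\alpha_j$ in the special cycle $C_v$, the element $(\alpha_j,\alpha_j)-(\alpha_0,\alpha_0)$ lies in $\mathrm{Ker}\,\psi_1^{gr(A)}$ (the two contributions attached to a binomial first-type relation involving $C_v$ are each $m(v)(\mathrm{Tip}(g),-)$ and cancel, while $\alpha_0,\alpha_j$ occur in no other special cycle); these give $\sum_{v\in V^\ast}(val(v)-1)=2|E|-|V|$ linearly independent elements, spanning a subspace $W_1$. Modulo $W_1$ a general element of $\langle(\alpha,\alpha)\mid\alpha\in Q_1\rangle$ is represented by $\sum_{v\in V^\ast}c_v(\alpha_v,\alpha_v)$, and — by the same computation as in Lemma~\ref{dimL00 of A} — it lies in $\mathrm{Ker}\,\psi_1^{gr(A)}$ iff $c_vm(v)=c_wm(w)$ for every balanced edge $\{v,w\}$ with $v,w\in V^\ast$. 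As $\mathrm{char}\,k=0$, writing $x_v:=c_vm(v)$ this says exactly that $v\mapsto x_v$ is constant along each such edge, so the dimension of the second family modulo $W_1$ equals the number of connected components of the graph $H:=(V^\ast,\{\text{balanced edges with both endpoints in }V^\ast\})$.

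The new point is the identity $\#\{\text{components of }H\}=|\Gamma_G|$. I would prove it via the auxiliary graph $H':=(V,\{\text{balanced edges of }G\})$. First, $|\Gamma_G|=\#\{\text{components of }H'\}$: the remodeling defining $\Gamma_G$ replaces each unbalanced edge by two edges each carrying a \emph{new} truncated (hence degree-$1$) vertex, so two original vertices are connected in the remodeled graph iff they are connected through balanced edges, and every new vertex sits in the component of an original one. Second, $H$ is obtained from $H'$ by deleting the truncated vertices of $G$; each of these has valency $1$ in $G$, hence degree $1$ in $H'$ (its unique incident edge being balanced), and deleting degree-$1$ vertices one by one changes no component count — here one excludes the degenerate single-edge graph with both multiplicities $1$, the only case where a whole component of $H'$ could consist of truncated vertices. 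Hence $\#\{\text{comp}(H)\}=\#\{\text{comp}(H')\}=|\Gamma_G|$.

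Assembling the count, $\dim_k(\langle(\alpha,\alpha)\mid\alpha\in Q_1\rangle\cap\mathrm{Ker}\,\psi_1^{gr(A)})=(2|E|-|V|)+|\Gamma_G|$, while the subspace $\langle\sum_{a\in Q_1e}(a,a)-\sum_{b\in eQ_1}(b,b)\mid e\in Q_0\rangle$ by which we quotient is literally the same as in Lemma~\ref{dimL00 of A} — it depends only on $\psi_0$ and on $Q_G$ being connected — of dimension $|Q_0|-1=|E|-1$. Therefore
\[
\dim_k L_{00}^{gr(A)}=\big((2|E|-|V|)+|\Gamma_G|\big)-(|E|-1)=|E|-|V|+1+|\Gamma_G|.
\]
The main obstacle will be the combinatorial step $\#\{\text{comp}(H)\}=|\Gamma_G|$ together with a careful treatment of truncated vertices, loops, and the degenerate case; the remaining linear algebra is routine once one notes, as in Lemma~\ref{dimL00 of A}, that distinct first-type relations have distinct tips, so the target vectors $(\mathrm{Tip}(g),-)$ are linearly independent.
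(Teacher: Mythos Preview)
Your proof is correct and rests on the same key observation as the paper: in the Gr\"obner basis of $I'$ only the first-type relations attached to \emph{balanced} edges remain binomial, so only these constrain $\psi_1^{gr(A)}((\alpha,\alpha))$. The organization, however, differs. The paper short-circuits the linear algebra by reducing to the balanced components: since each balanced component, viewed as a Brauer graph, has its Brauer graph algebra equal to its own graded algebra, one may apply Lemma~\ref{dimL00 of A} to each component $C\in\Gamma_G$ to get $\dim_k(\langle(\alpha,\alpha)\rangle\cap\mathrm{Ker}\,\psi_1^{C})=2|E_C|-|V_C|+1$, and then just sum, using $\sum_C|E_C|=|E|+|\mathbb W|$ and $\sum_C|V_C|=|V|+2|\mathbb W|$. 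You instead rerun the kernel computation directly on $gr(A)$, obtaining the same $2|E|-|V|$ from the ``within one cycle'' family and then showing the residual family has dimension $\#\mathrm{comp}(H)$, which you identify with $|\Gamma_G|$ via the auxiliary graph $H'$ and the observation that truncated vertices are leaves in $H'$. Both routes are short; the paper's buys brevity by reusing Lemma~\ref{dimL00 of A} verbatim, while yours makes the combinatorial identity $\#\mathrm{comp}(H)=|\Gamma_G|$ explicit and avoids the (slightly informal) claim that the kernel computation decomposes along balanced components. Your handling of the degenerate single-edge graph and of truncated vertices is careful and correct.
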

		
		\begin{proof}
			In $L_{00}^{gr(A)}$, we only need to consider the linear combination of $(\alpha,\alpha)$. For each $\alpha\in Q_1$, if $\alpha$ is not involved in some homogeneous relations of the first type, then $\psi_1^{gr(A)}((\alpha,\alpha))=0$. That means the monomial relations in the Gr{\"o}bner basis $\mathcal{G}$ of $I$ will not influence the basis of $L_{00}^{gr(A)}$. Therefore, it is enough to consider the balanced components of $G$ as Brauer graphs. By Lemma \ref{dimL00 of A}, for $C\in\Gamma_G$,  $\mathrm{dim}_k(\langle (\alpha,\alpha)|\alpha\in Q_1\rangle\cap \mathrm{Ker}\psi_1^{C})=\sum_{v\in V_C}(val(v)-1)+1=2|E_C|-|V_C|+1$. In addition, $\mathrm{dim}_k(\langle \psi_0^{gr(A)}(e,e)|e\in Q_0\rangle )=|E|-1$ in the same way. Then
			$$
			\begin{array}{*{3}{lll}}
				\mathrm{dim}_kL_{00}^{gr(A)} & =&\sum_{C\in\Gamma_G}(2|E_C|-|V_C|+1)-|E|+1\\
				& =&2(|E|+|\mathbb{W}|)-(|V|+2|\mathbb{W}|)+|\Gamma_G|-|E|+1\\
				&=& |E|-|V|+1+|\Gamma_G|.
			\end{array}
			$$
		\end{proof}
		
		We now check the solvability of $\mathrm{HH}^1(gr(A))$ which can be verified by using the same method as in Section 4. Besides, since $I'$ is an homogeneous ideal of $kQ_G$, it is easier to verify the solvability of $\mathrm{HH}^1(gr(A))$ by using the graded structure of $\mathrm{HH}^1(gr(A))$ as we defined in Section 3.3.
		
		\begin{lemma}\label{grL0=L00}
			Let $G$ be a Brauer graph that does not contain a subgraph $	
			(v{\xlongequal{\;\;}}w)$ with $grd(v)=2$, or
			$G\neq (v{-}w)$ with $m(v)=2$, $m(w)\geq 2$. Then $L_0^{gr(A)}=L_{00}^{gr(A)}$.
		\end{lemma}
		
		\begin{proof}
			If $(k(Q_1//Q_1)\cap \mathrm{Ker}\psi_1^{gr(A)})\backslash(\langle (\alpha,\alpha)|\alpha\in Q_1\rangle\cap \mathrm{Ker}\psi_1^{gr(A)})\neq \varnothing$, then there exists some pair of arrows $(\alpha,\beta)\in k(Q_1//Q_1)$, such that $\alpha\neq\beta$. That means in this Brauer graph, there exists two edges $e_1,e_2$, such that they connect to the same vertices $v_1,v_2$.
			\begin{figure}[H]
			\begin{center}

				\tikzset{every picture/.style={line width=0.75pt}} 
				
				\begin{tikzpicture}[x=0.75pt,y=0.75pt,yscale=-1,xscale=1]
					
					\draw  [fill={rgb, 255:red, 210; green, 210; blue, 210 }  ,fill opacity=1 ] (80,95) .. controls (80,86.72) and (86.72,80) .. (95,80) .. controls (103.28,80) and (110,86.72) .. (110,95) .. controls (110,103.28) and (103.28,110) .. (95,110) .. controls (86.72,110) and (80,103.28) .. (80,95) -- cycle ;
					\draw  [fill={rgb, 255:red, 210; green, 210; blue, 210 }  ,fill opacity=1 ] (230,95) .. controls (230,86.72) and (236.72,80) .. (245,80) .. controls (253.28,80) and (260,86.72) .. (260,95) .. controls (260,103.28) and (253.28,110) .. (245,110) .. controls (236.72,110) and (230,103.28) .. (230,95) -- cycle ;
					\draw    (107,87.5) .. controls (146,63) and (190,61) .. (233,86.5) ;
					\draw    (108,103.5) .. controls (148,124) and (189,126.5) .. (231,104) ;
					\draw    (128,103.5) .. controls (133.64,95.04) and (130.44,95.42) .. (126.72,86.36) ;
					\draw [shift={(126,84.5)}, rotate = 70.02] [color={rgb, 255:red, 0; green, 0; blue, 0 }  ][line width=0.75]    (8.74,-2.63) .. controls (5.56,-1.12) and (2.65,-0.24) .. (0,0) .. controls (2.65,0.24) and (5.56,1.12) .. (8.74,2.63)   ;
					\draw    (97,130.5) .. controls (113.24,130.5) and (114.88,128.68) .. (124.57,119.8) ;
					\draw [shift={(126,118.5)}, rotate = 137.73] [color={rgb, 255:red, 0; green, 0; blue, 0 }  ][line width=0.75]    (8.74,-2.63) .. controls (5.56,-1.12) and (2.65,-0.24) .. (0,0) .. controls (2.65,0.24) and (5.56,1.12) .. (8.74,2.63)   ;
					\draw    (122,71.5) .. controls (114.24,60.83) and (117.77,64.27) .. (98.82,57.19) ;
					\draw [shift={(97,56.5)}, rotate = 20.85] [color={rgb, 255:red, 0; green, 0; blue, 0 }  ][line width=0.75]    (10.93,-3.29) .. controls (6.95,-1.4) and (3.31,-0.3) .. (0,0) .. controls (3.31,0.3) and (6.95,1.4) .. (10.93,3.29)   ;
					\draw    (213,103.5) .. controls (207.33,93.1) and (209.7,95.21) .. (214.19,86.19) ;
					\draw [shift={(215,84.5)}, rotate = 114.44] [color={rgb, 255:red, 0; green, 0; blue, 0 }  ][line width=0.75]    (10.93,-3.29) .. controls (6.95,-1.4) and (3.31,-0.3) .. (0,0) .. controls (3.31,0.3) and (6.95,1.4) .. (10.93,3.29)   ;
					\draw    (247,133.5) .. controls (233.56,130.62) and (230.26,130.5) .. (221.17,119) ;
					\draw [shift={(220,117.5)}, rotate = 52.43] [color={rgb, 255:red, 0; green, 0; blue, 0 }  ][line width=0.75]    (10.93,-3.29) .. controls (6.95,-1.4) and (3.31,-0.3) .. (0,0) .. controls (3.31,0.3) and (6.95,1.4) .. (10.93,3.29)   ;
					\draw    (220,70.5) .. controls (227.72,59.88) and (226.13,61.38) .. (242.18,54.3) ;
					\draw [shift={(244,53.5)}, rotate = 156.04] [color={rgb, 255:red, 0; green, 0; blue, 0 }  ][line width=0.75]    (10.93,-3.29) .. controls (6.95,-1.4) and (3.31,-0.3) .. (0,0) .. controls (3.31,0.3) and (6.95,1.4) .. (10.93,3.29)   ;
					
					\draw (90,90) node [anchor=north west][inner sep=0.75pt]   [align=left] {$v_1$};
					\draw (238,90) node [anchor=north west][inner sep=0.75pt]   [align=left] {$v_2$};
					\draw (35,90) node [anchor=north west][inner sep=0.75pt]  [font=\Large] [align=left] {......};
					\draw (267,90) node [anchor=north west][inner sep=0.75pt]  [font=\Large] [align=left] {......};
					\draw (113,129) node [anchor=north west][inner sep=0.75pt]   [align=left] {$\alpha_{1}$};
					\draw (137,87) node [anchor=north west][inner sep=0.75pt]   [align=left] {$\alpha$};
					\draw (113,45) node [anchor=north west][inner sep=0.75pt]   [align=left] {$\alpha_{2}$};
					\draw (211,129) node [anchor=north west][inner sep=0.75pt]   [align=left] {$\beta_{1}$};
					\draw (189,87) node [anchor=north west][inner sep=0.75pt]   [align=left] {$\beta$};
					\draw (207,45) node [anchor=north west][inner sep=0.75pt]   [align=left] {$\beta_{2}$};
					\draw (162,57) node [anchor=north west][inner sep=0.75pt]   [align=left] {$e_1$};
					\draw (162,124) node [anchor=north west][inner sep=0.75pt]   [align=left] {$e_2$};

				\end{tikzpicture}
			\end{center}
			\caption{Parallel arrows in a Brauer graph}
		\end{figure}
			
			If $e_1$,$e_2$ are different edges, then there is a multiple edge in Brauer graph.		
			Since the relations of type \uppercase\expandafter{\romannumeral3} are contained in the reduced Gr{\"o}bner basis $\mathcal{G}$, then $\beta\alpha_{1},\alpha\beta_{1},\beta_{2}\alpha,\alpha_{2}\beta\in\mathcal{G}$. Without loss of generality, let $grd(v_1)\neq 2$. If $grd(v_2)\neq 2$, then
			$$(\alpha\beta_{1},\pi(\alpha\beta_{1}^{(\alpha,\beta)}))=(\alpha\beta_{1},\pi(\beta\beta_{1}))=(\alpha\beta_{1},\beta\beta_{1})$$
			$$(\beta_{2}\alpha,\pi(\beta_{2}\alpha^{(\alpha,\beta)}))=(\beta_{2}\alpha,\pi(\beta_2\beta))=(\beta_{2}\alpha,\beta_2\beta)$$
			$$(\alpha_2\beta,\pi(\alpha_2\beta^{(\beta,\alpha)}))=(\alpha_2\beta,\pi(\alpha_2\alpha))=(\alpha_2\beta,\alpha_2\alpha)$$
			$$(\beta\alpha_1,\pi(\beta\alpha_1^{(\beta,\alpha)}))=(\beta\alpha_1,\pi(\alpha\alpha_{1}))=(\beta\alpha_1,\alpha\alpha_{1})$$
			That means $(\alpha,\beta),(\beta,\alpha)$ will not appear in  $k(Q_1//Q_1)\cap \mathrm{Ker}\psi_1^{gr(A)}$ since the parallel path pairs above will not appear in any $\psi_1^{gr(A)}((\alpha',\beta'))$ with $(\alpha',\beta')\in k(Q_1//Q_1)$.
			However, when $grd(v_2)= 2$, since $\pi(\beta\beta_{1})=\pi(\beta_2\beta)=0$, we have
			$$(\alpha\beta_{1},\pi(\alpha\beta_{1}^{(\alpha,\beta)}))=(\alpha\beta_{1},\pi(\beta\beta_{1}))=0,$$
			$$(\beta_{2}\alpha,\pi(\beta_{2}\alpha^{(\alpha,\beta)}))=(\beta_{2}\alpha,\pi(\beta_2\beta))=0.$$
			That means $(\alpha,\beta)\in k(Q_1//Q_1)\cap \mathrm{Ker}\psi_1^{gr(A)}$.
			
			If $e_1$ and $e_2$ are the same edges, then $G=
			(\begin{tiny}	
				\begin{tikzcd}
					\bullet & \bullet
					\arrow[no head, from=1-1, to=1-2]
				\end{tikzcd}
			\end{tiny})$ or
			$(\begin{tiny}		
				\begin{tikzcd}
					\bullet \arrow[loop, distance=2em, in=35,no head, out=325]
			\end{tikzcd}\end{tiny})$. By the hypothesis of the proposition, if the vertices $v_1,v_2$ in the first case have the property that $m(v_1)>2$ and $m(v_2)>2$, or if the unique vertex $v'$ in the second case has $m(v')>1$, it can be checked that $(\alpha,\beta)$ can not appear in $k(Q_1//Q_1)\cap \mathrm{Ker}\psi^{gr(A)}$ by the same reason of the previous cases. If the vertex $v'$ in the second case has $m(v')=1$, then the corresponding BGA is given by $A=gr(A)=k\langle x,y\rangle/\langle xy-yx,x^2,y^2\rangle$, whose first Hochschild cohomology group is given by the following vector space $$\{(x,x),\;(y,y),\;(x,yx),\;(y,yx)\},$$ and therefore $L_0^{gr(A)}=L_{00}^{gr(A)}=\{(x,x),(y,y)\}$.
		\end{proof}
		
		\begin{remark}\label{gr-L0}
			Since the associated graded algebra of a BGA is defined by homogeneous ideal, it is natural to consider the graded structure on $\mathrm{HH}^1(gr(A))$. Similar to the discussion in \cite[Page 258]{Strametz}, we have  $$\mathrm{HH}^1(gr(A))\bigg/rad(\mathrm{HH}^1(gr(A)))\cong L_0^{gr(A)}\bigg/ rad(L_0)^{gr(A)}.$$
		\end{remark}
		
		\begin{theorem}\label{grA-solvable}
			Let $A$ be a BGA over a field of characteristic zero such that the corresponding Brauer graph $G$ is different from	
			$(\begin{tiny}	
				\begin{tikzcd}
					\bullet & \bullet
					\arrow[shift left=1, no head, from=1-1, to=1-2]
					\arrow[shift right=1, no head, from=1-1, to=1-2]
				\end{tikzcd}
			\end{tiny})$ (in this case, both vertices have multiplicity $1$) and let $gr(A)$ be the associated graded algebra of $A$. Then $\mathrm{HH}^1(gr(A))$ is solvable.
		\end{theorem}
		
		\begin{proof}
			By Remark \ref{gr-L0} and Lemma \ref{grL0=L00}, we can only consider the solvability on $L_0^{gr(A)}$.
			\begin{itemize}
				\item If $G$ does not contain a subgraph $	
				(\begin{tiny}	
					\begin{tikzcd}
						\bullet & \bullet
						\arrow[shift left=1, no head, from=1-1, to=1-2]
						\arrow[shift right=1, no head, from=1-1, to=1-2]
					\end{tikzcd}
				\end{tiny})$ where one vertex, say $v$, satisfies  $grd(v)=2$, or
				$G\neq (\begin{tiny}		
					\begin{tikzcd}
						{\bullet[2]} & {\bullet[m]}
						\arrow[no head, from=1-1, to=1-2]
				\end{tikzcd}\end{tiny})$ with $m\geq 2$, then $L_0^{gr(A)}=L_{00}^{gr(A)}$, and $(L_0^{gr(A)})^{(1)}=(L_{00}^{gr(A)})^{(1)}=0$.
				
				\item Let $G= (\begin{tiny}		
					\begin{tikzcd}
						{v_1[2]} & {v_2[m]}
						\arrow[no head, from=1-1, to=1-2]
				\end{tikzcd}\end{tiny})$ with $m\geq 2$. If $m=2$, the analysis reduces to the case of its corresponding BGA in Corollary \ref{A-solvable}. So let $m>2$ and denote the arrow around $v_1$ by $\alpha$, the arrow around $v_2$ by $\beta$, then we have  $$L_0^{gr(A)}=L_{00}^{gr(A)}\cup\{(\beta,\alpha)\}.$$ Thus $(L_0^{gr(A)})^{(1)}=\{(\beta,\alpha)\}$, $(L_0^{gr(A)})^{(2)}=0$.
				
				\item Let $G$ contain a subgraph $	
				(\begin{tiny}	
					\begin{tikzcd}
						v_i & v_i'
						\arrow[shift left=1, no head, from=1-1, to=1-2]
						\arrow[shift right=1, no head, from=1-1, to=1-2]
					\end{tikzcd}
				\end{tiny})$ with $grd(v_i)=2$, $i=1,\cdots,m$. Assume $grd(v_i')>2$ and denote the parallel arrows in the multiple edges around $v_i,v_i'$ by $\alpha_i,\beta_i$ respectively. Then in this case, $L_0^{gr(A)}=L_{00}^{gr(A)}\cup\{(\beta_i,\alpha_i)|i=1,\cdots,m\}$. Thus  $(L_0^{gr(A)})^{(1)}=\{(\beta_i,\alpha_i)|i=1,\cdots,m\}$, $(L_0^{gr(A)})^{(2)}=0$.
			\end{itemize}
			
			To sum up, $\mathrm{HH}^1(gr(A))$ is solvable.
		\end{proof}
		
		\begin{remark}
			Suppose that the field $k$ satisfies condition $(*)$ for $A$. The conclusions in Lemma \ref{grL0=L00} and Theorem \ref{grA-solvable} are also true. In fact, we only need to assume $\mathrm{char}(k)=0$ when we need a specific description about the $k$-basis of $L_{00}$ (respectively, $L_{00}^{gr}$).
		\end{remark}

		\subsection{An injection from $\mathrm{HH}^1(A)$ to $\mathrm{HH}^1(gr(A))$}
		
		Let $A$ be a BGA. In this subsection, we will construct a Lie algebra monomorphism from $\mathrm{HH}^1(A)$ to $\mathrm{HH}^1(gr(A))$, which will give us a specific comparison between them. We begin by constructing a $k$-basis for the first Hochschild cohomology group $\mathrm{HH}^1(gr(A))$.
		
		\begin{lemma}\label{gen-set of grA}
			There is a $k$-basis $\mathcal{B}^{gr(A)}_{L,Ker}$ of $\mathrm{Ker}\psi_1^{gr(A)}$ which consists of the following five subsets of $\mathrm{Ker}\psi_1^{gr(A)}$:
			\begin{itemize}
				\item $S_1^{gr}$: the basis of $\langle (\alpha,\alpha)|\alpha\in Q_1\rangle\cap \mathrm{Ker}\psi_1^{gr(A)}$ described in Lemma \ref{dimL00 of grA};
				
				\item $S_2^{gr}$: consider the following subgraph of the Brauer graph:
				\begin{figure}[H]
				\begin{center}

					\tikzset{every picture/.style={line width=0.75pt}} 
					
					\begin{tikzpicture}[x=0.75pt,y=0.75pt,yscale=-1,xscale=1]
						
						\draw  [fill={rgb, 255:red, 210; green, 210; blue, 210 }  ,fill opacity=1 ] (80,95) .. controls (80,86.72) and (86.72,80) .. (95,80) .. controls (103.28,80) and (110,86.72) .. (110,95) .. controls (110,103.28) and (103.28,110) .. (95,110) .. controls (86.72,110) and (80,103.28) .. (80,95) -- cycle ;
						\draw  [fill={rgb, 255:red, 210; green, 210; blue, 210 }  ,fill opacity=1 ] (230,95) .. controls (230,86.72) and (236.72,80) .. (245,80) .. controls (253.28,80) and (260,86.72) .. (260,95) .. controls (260,103.28) and (253.28,110) .. (245,110) .. controls (236.72,110) and (230,103.28) .. (230,95) -- cycle ;
						\draw    (107,87.5) .. controls (146,63) and (190,61) .. (233,86.5) ;
						\draw    (108,103.5) .. controls (148,124) and (189,126.5) .. (231,104) ;
						\draw    (128,103.5) .. controls (133.64,95.04) and (130.44,95.42) .. (126.72,86.36) ;
						\draw [shift={(126,84.5)}, rotate = 70.02] [color={rgb, 255:red, 0; green, 0; blue, 0 }  ][line width=0.75]    (8.74,-2.63) .. controls (5.56,-1.12) and (2.65,-0.24) .. (0,0) .. controls (2.65,0.24) and (5.56,1.12) .. (8.74,2.63)   ;
						\draw    (97,130.5) .. controls (113.24,130.5) and (114.88,128.68) .. (124.57,119.8) ;
						\draw [shift={(126,118.5)}, rotate = 137.73] [color={rgb, 255:red, 0; green, 0; blue, 0 }  ][line width=0.75]    (8.74,-2.63) .. controls (5.56,-1.12) and (2.65,-0.24) .. (0,0) .. controls (2.65,0.24) and (5.56,1.12) .. (8.74,2.63)   ;
						\draw    (122,71.5) .. controls (114.24,60.83) and (117.77,64.27) .. (98.82,57.19) ;
						\draw [shift={(97,56.5)}, rotate = 20.85] [color={rgb, 255:red, 0; green, 0; blue, 0 }  ][line width=0.75]    (10.93,-3.29) .. controls (6.95,-1.4) and (3.31,-0.3) .. (0,0) .. controls (3.31,0.3) and (6.95,1.4) .. (10.93,3.29)   ;
						\draw    (209,82.5) .. controls (205.2,94.85) and (205.91,92.76) .. (211.99,102.81) ;
						\draw [shift={(213,104.5)}, rotate = 239.74] [color={rgb, 255:red, 0; green, 0; blue, 0 }  ][line width=0.75]    (10.93,-3.29) .. controls (6.95,-1.4) and (3.31,-0.3) .. (0,0) .. controls (3.31,0.3) and (6.95,1.4) .. (10.93,3.29)   ;
						\draw    (218,115.5) .. controls (223.79,127.08) and (229.58,128.42) .. (244.36,135.68) ;
						\draw [shift={(246,136.5)}, rotate = 206.57] [color={rgb, 255:red, 0; green, 0; blue, 0 }  ][line width=0.75]    (10.93,-3.29) .. controls (6.95,-1.4) and (3.31,-0.3) .. (0,0) .. controls (3.31,0.3) and (6.95,1.4) .. (10.93,3.29)   ;
						\draw    (245,50.5) .. controls (227.54,52.44) and (227.96,53.44) .. (215.22,66.27) ;
						\draw [shift={(214,67.5)}, rotate = 315] [color={rgb, 255:red, 0; green, 0; blue, 0 }  ][line width=0.75]    (10.93,-3.29) .. controls (6.95,-1.4) and (3.31,-0.3) .. (0,0) .. controls (3.31,0.3) and (6.95,1.4) .. (10.93,3.29)   ;
						
						\draw (90,90) node [anchor=north west][inner sep=0.75pt]   [align=left] {v};
						\draw (238,90) node [anchor=north west][inner sep=0.75pt]   [align=left] {w};
						\draw (35,90) node [anchor=north west][inner sep=0.75pt]  [font=\Large] [align=left] {......};
						\draw (267,90) node [anchor=north west][inner sep=0.75pt]  [font=\Large] [align=left] {......};
						\draw (113,129) node [anchor=north west][inner sep=0.75pt]   [align=left] {$\alpha_{k-1}$};
						\draw (137,87) node [anchor=north west][inner sep=0.75pt]   [align=left] {$\alpha_k$};
						\draw (113,47) node [anchor=north west][inner sep=0.75pt]   [align=left] {$\alpha_{0}$};
						\draw (207,129) node [anchor=north west][inner sep=0.75pt]   [align=left] {$\beta_{1}$};
						\draw (189,87) node [anchor=north west][inner sep=0.75pt]   [align=left] {$\beta_0$};
						\draw (203,45) node [anchor=north west][inner sep=0.75pt]   [align=left] {$\beta_m$};
						\draw (160,57) node [anchor=north west][inner sep=0.75pt]   [align=left] {$e_1$};
						\draw (160,123) node [anchor=north west][inner sep=0.75pt]   [align=left] {$e_2$};

					\end{tikzpicture}
					
				\end{center}
				\caption{The subgraph corresponding to some element in $S_2^{gr}$}
					\end{figure}
				with $\alpha_{k}\neq\beta_0$, $\alpha_{k}\beta_0, \beta_0\alpha_{k}\in R_3$. 
				
				Let $\hat{\alpha_k}:=\alpha_{k-1}\cdots\alpha_{0}\cdot C_v(\alpha_{0})^{m(v)-1}$, $\hat{\beta_0}:=\beta_m\cdots\beta_{1}\cdot C_w(\beta_{1})^{m(w)-1}$ 
				\begin{itemize}
					\item	If $grd(v)=grd(w)$, then the corresponding element in $S_2^{gr}$ is $(\beta_0,\hat{\alpha_k})-(\alpha_{k},\hat{\beta_0})$.
					\item  If $grd(v)>grd(w)$, then the corresponding element in $S_2^{gr}$ is $(\alpha_{k},\hat{\beta_0})$.
					\item If $grd(v)<grd(w)$, then the corresponding element in $S_2^{gr}$ is $(\beta_0,\hat{\alpha_k})$.
				\end{itemize}

				\item $S_3^{gr}$: elements of the form $(\beta,C_v(\alpha)^{m(v)})$, where $\alpha,\beta$ are corresponding to arrows in the following Brauer graph, with $l(C_v(\alpha)^{m(v)})>m(w)$, or $l(C_v(\alpha)^{m(v)})=m(w)$ and $\beta^{m(w)}>C_v(\alpha)^{m(v)}$.
				\begin{figure}[H]
				\begin{center}
					\tikzset{every picture/.style={line width=0.75pt}} 
					\begin{tikzpicture}[x=0.75pt,y=0.75pt,yscale=-1,xscale=1]
						
						\draw    (100,108) -- (200,108) ;
						\draw  [fill={rgb, 255:red, 210; green, 210; blue, 210 }  ,fill opacity=1 ] (75.5,108) .. controls (75.5,101.23) and (80.98,95.75) .. (87.75,95.75) .. controls (94.52,95.75) and (100,101.23) .. (100,108) .. controls (100,114.77) and (94.52,120.25) .. (87.75,120.25) .. controls (80.98,120.25) and (75.5,114.77) .. (75.5,108) -- cycle ;
						\draw  [fill={rgb, 255:red, 210; green, 210; blue, 210 }  ,fill opacity=1 ] (200,108) .. controls (200,101.23) and (205.48,95.75) .. (212.25,95.75) .. controls (219.02,95.75) and (224.5,101.23) .. (224.5,108) .. controls (224.5,114.77) and (219.02,120.25) .. (212.25,120.25) .. controls (205.48,120.25) and (200,114.77) .. (200,108) -- cycle ;
						\draw    (120,99.5) .. controls (86,50.5) and (53,84.5) .. (55,111.5)(55,108) .. controls (56.98,131.27) and (85.42,159.92) .. (117.04,120.71) ;
						\draw [shift={(118,119.5)}, rotate = 127.97] [color={rgb, 255:red, 0; green, 0; blue, 0 }  ][line width=0.75]    (10.93,-3.29) .. controls (6.95,-1.4) and (3.31,-0.3) .. (0,0) .. controls (3.31,0.3) and (6.95,1.4) .. (10.93,3.29)   ;
						\draw    (212,72.5) .. controls (199.46,71.54) and (193.43,79.88) .. (189.43,95.74) ;
						\draw [shift={(189,97.5)}, rotate = 283.24] [color={rgb, 255:red, 0; green, 0; blue, 0 }  ][line width=0.75]    (10.93,-3.29) .. controls (6.95,-1.4) and (3.31,-0.3) .. (0,0) .. controls (3.31,0.3) and (6.95,1.4) .. (10.93,3.29)   ;
						\draw    (188,121.5) .. controls (193.85,137.1) and (193.05,134.64) .. (213.39,140.99) ;
						\draw [shift={(215,141.5)}, rotate = 197.65] [color={rgb, 255:red, 0; green, 0; blue, 0 }  ][line width=0.75]    (10.93,-3.29) .. controls (6.95,-1.4) and (3.31,-0.3) .. (0,0) .. controls (3.31,0.3) and (6.95,1.4) .. (10.93,3.29)   ;
						
						\draw (241,108) node [anchor=north west][inner sep=0.75pt]   [align=left] {......};
						\draw (81,105) node [anchor=north west][inner sep=0.75pt]   [align=left] {w};
						\draw (208,105) node [anchor=north west][inner sep=0.75pt]   [align=left] {v};
						\draw (38,102) node [anchor=north west][inner sep=0.75pt]   [align=left] {$\beta$};
						\draw (180,140) node [anchor=north west][inner sep=0.75pt]   [align=left] {$\alpha$};
					\end{tikzpicture}
				\end{center}
					\caption{The subgraph corresponding to some element in $S_3^{gr}$}
					\end{figure}
				
				\item $S_4^{gr}$: elements of the form $(\alpha,p)$ satisfying
				\begin{itemize}
					\item $l(p)>1$;
					\item $\psi_1^{gr(A)}(\alpha,p)=0$;
					\item there exists a special cycle $C_v$, such that $\alpha\mid C_v$, $p\mid C_v^{m(v)}$.
				\end{itemize}
				
				\item $S_5^{gr}$: the basis of the subspace $Im\psi_0^{gr(A)}$ generated by all the elements $(\alpha_1,\alpha_1 p)-(\alpha_0,p\alpha_0)$ where $p$ is a cycle in $Q$ and $\psi_1^{gr(A)}((\alpha_1,\alpha_1 p)) =\psi_1^{gr(A)}((\alpha_0,p\alpha_0))\neq 0$.
			\end{itemize}
			Furthermore, $\mathrm{Im}\psi_0^{gr(A)}$ is contained in $\langle S_1^{gr}\cup S_4^{gr}\cup S_5^{gr}\rangle$. The $k$-basis of $\mathrm{HH}^1(gr(A))$ induced from $\mathcal{B}_{L,Ker}^{gr(A)}$ will be denoted by $\mathcal{B}_L^{gr(A)}$.
		\end{lemma}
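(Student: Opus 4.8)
The plan is to run, for $gr(A)=kQ/I'$, exactly the argument that proved Theorem~\ref{gen-set of A}. The first step is to pin down a reduced Gr\"obner basis $\mathcal{G}'$ of $I'$: by Lemma~\ref{relations of grA} and the remark following it, $\mathcal{G}'$ consists of all monomial relations of types II and III of $I$, together with, for each \emph{balanced} edge $v\stackrel{i}{-}v'$ of $G$ (those with $grd(v)=grd(v')$), the homogeneous type-I relation $C_v^{m(v)}-C_{v'}^{m(v')}$, while for an \emph{unbalanced} edge the type-I relation degenerates to the monomial relation that sets the special cycle power of smaller length equal to $0$. Thus the only non-monomial members of $\mathcal{G}'$ are indexed by the balanced edges (equivalently by the edges lying inside the members of $\Gamma_G$), and $gr(A)$ restricted to a balanced component is the Brauer graph algebra of that component. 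Since $I'$ is homogeneous and, in this section, $\mathrm{char}\,k=0$, Proposition~\ref{$l_{-1}$} applies (its hypothesis (3) holds), giving $L_{-1}^{gr(A)}=0$; as $\mathrm{Ker}\psi_1^{gr(A)}$ is graded, this means, exactly as in Proposition~\ref{BGA L-1}, that no nonzero element of $\mathrm{Ker}\psi_1^{gr(A)}$ has a summand in $k(Q_1/\!/Q_0)$, so we may always assume our parallel pairs $(\alpha,p)$ satisfy $p\in\mathcal{B}\setminus Q_0$.

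With this in hand I would first verify $S_1^{gr}\cup\cdots\cup S_5^{gr}\subseteq\mathrm{Ker}\psi_1^{gr(A)}$ by evaluating $\psi_1^{gr(A)}$ on the displayed generators just as in the proof of Theorem~\ref{gen-set of A}. The only subtlety is $S_2^{gr}$: when $grd(v)\neq grd(w)$ exactly one of the two special cycle powers attached to the relevant edge survives in $gr(A)$, so a single parallel-pair term is already a cocycle (giving the one-term members of $S_2^{gr}$), while for $grd(v)=grd(w)$ both survive and one reproduces the two-term computation of $S_2$ from Theorem~\ref{gen-set of A}. Linear independence of the five families is immediate from their explicit shapes, and the count of $S_1^{gr}$ provided by Lemma~\ref{dimL00 of grA} shows nothing is redundant.

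The substance of the argument is the spanning statement. Given $t=\sum_i c_i(\alpha_i,p_i)\in\mathrm{Ker}\psi_1^{gr(A)}$ and a summand $(\alpha,p)$, we have $p\notin Q_0$ by the reduction above. If $\psi_1^{gr(A)}(\alpha,p)\neq 0$, pick $g\in\mathcal{G}'$ and $p'\in\mathrm{Supp}(g)$ with $(\mathrm{Tip}(g),\pi(p'^{(\alpha,p)}))\neq 0$; splitting on whether $g$ is a (balanced) type-I relation or a type-III relation, and on whether $\pi$ alters $p'^{(\alpha,p)}$, one argues as in Theorem~\ref{gen-set of A} that $(\alpha,p)$ must occur inside a summand of $t$ belonging to $kS_1^{gr}$, $kS_2^{gr}$ or $kS_5^{gr}$; the new feature is that a type-I member of $\mathcal{G}'$ now has only one non-monomial term, which is precisely what forces the one-term shape of the $grd(v)\neq grd(w)$ elements of $S_2^{gr}$. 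If $\psi_1^{gr(A)}(\alpha,p)=0$, the same dichotomy as in Theorem~\ref{gen-set of A} ($l(p)=1$ versus $l(p)>1$, and $p\mid C_v^{m(v)}$ versus $p\nmid C_v^{m(v)}$) places $(\alpha,p)$ in $S_3^{gr}\cup S_4^{gr}$; the threshold ``$l(C_v(\alpha)^{m(v)})>m(w)$, or $=m(w)$ with $\beta^{m(w)}>C_v(\alpha)^{m(v)}$'' in the definition of $S_3^{gr}$ appears because in $gr(A)$ the relation attached to $w$ has become the homogeneous (or monomial) piece of degree $m(w)$. Finally, $\mathrm{Im}\psi_0^{gr(A)}\subseteq\langle S_1^{gr}\cup S_4^{gr}\cup S_5^{gr}\rangle$ follows by evaluating $\psi_0^{gr(A)}$ on the generators $(e,\gamma)$ and matching the output against these three families, just as for $A$.

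The step I expect to be the real obstacle is the bookkeeping inside the spanning argument: one must keep precise track of which monomials $p'^{(\alpha,p)}$ remain in $\mathrm{NonTip}(\mathcal{G}')$ after applying $\pi$, remembering that passing from $A$ to $gr(A)$ turns the type-I relation of every unbalanced edge into a monomial relation, so that the ``companion'' special cycle power of a surviving one disappears. Making the case list for $S_2^{gr}$ and $S_3^{gr}$ simultaneously exhaustive, pairwise disjoint, and correctly matched to the three subcases $grd(v)>grd(w)$, $grd(v)=grd(w)$, $grd(v)<grd(w)$ is where the care lies, although each individual verification is routine and parallels its counterpart in the proof of Theorem~\ref{gen-set of A}.
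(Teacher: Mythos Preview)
Your overall plan is the paper's own plan: adapt the proof of Theorem~\ref{gen-set of A} to $gr(A)$, using the decomposition of $\mathcal{G}'$ into the balanced type-I relations $R_1^0$, the degenerate monomial pieces $R_1^1$ coming from unbalanced edges, and $R_2,R_3$, together with Proposition~\ref{$l_{-1}$} to discard $k(Q_1/\!/Q_0)$.

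There is, however, a concrete misplacement in your case split. You put the one-term members of $S_2^{gr}$ (those with $grd(v)\neq grd(w)$) into the branch $\psi_1^{gr(A)}(\alpha,p)\neq 0$, and you claim that in the branch $\psi_1^{gr(A)}(\alpha,p)=0$ the outcome is always $S_3^{gr}\cup S_4^{gr}$. This is wrong. If $grd(v)>grd(w)$ then both edges $e_1,e_2$ are unbalanced with $v_S=w$, so $C_w(\beta_0)^{m(w)}$ and $C_w(\beta_1)^{m(w)}$ lie in $R_1^1\subseteq I'$ and vanish in $gr(A)$; hence for $p=\beta_m\cdots\beta_1\cdot C_w(\beta_1)^{m(w)-1}$ one gets $\psi_1^{gr(A)}(\alpha_k,p)=0$, not $\neq 0$. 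The paper accordingly finds these one-term $S_2^{gr}$ elements inside the $\psi_1^{gr(A)}=0$ branch, via an extra sub-case absent from Theorem~\ref{gen-set of A}: when $p\nmid C_v^{m(v)}$ but $p\mid C_w^{m(w)}$, one takes $\beta$ with $\alpha\beta\in R_3$ and observes that $p\beta$ must be divisible by some $g\in R_2$ or $g\in R_1^1$; the $R_1^1$ possibility with $l(p)=grd(w)-1$ is exactly what produces the one-term $S_2^{gr}$ element. Your sentence ``a type-I member of $\mathcal{G}'$ now has only one non-monomial term'' is also not what is happening: the unbalanced type-I relation becomes an honest monomial in $R_1^1$, and it is the presence of this new monomial relation (not a modified non-monomial one) that kills the second term of the old $S_2$ cocycle and makes the remaining term a cocycle on its own. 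Once you relocate this sub-case, the rest of your outline matches the paper's proof.
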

		
		\begin{proof}
			By definitions, it is straightforward to check that the elements in the set $S_1^{gr}\cup S_2^{gr}\cup S_3^{gr}\cup S_4^{gr}\cup S_5^{gr}$ are linearly independent and contained in $\mathrm{Ker}\psi_1^{gr(A)}$ (c.f. the proof of Theorem \ref{gen-set of A}). Denote the Gr{\"o}bner basis of $gr(A)$ by $\mathcal{G}^{gr(A)}$, the $i$-th type of relations in $kQ_G$ by $R_i$, $i=1,2,3$. We also denote the relations $\rho'$ in $I'$ induced by the first type by $R_1'$ (c.f. Lemma \ref{relations of grA}). Let $R_1\cap R_1'=R_1^0$, $R_1'\backslash R_1=R_1^1$, then $R_1^0\cup R_1^1\cup R_3\subseteq \mathcal{G}^{gr(A)}\subseteq R_1^0\cup R_1^1\cup R_2\cup R_3$.
			
			Consider $t=\sum_{i\in I}k_i(\alpha_{i},p_i)\in \mathrm{Ker}\psi_1^{gr(A)}$, and let $(\alpha,p)$ be a summand of $t$. By the graded structure discussed in Section 3 and Proposition \ref{l_{-1}}, we can assume $l(p)\geq 1$. 
			
			Firstly, we suppose that $\psi_1(\alpha,p)\neq 0$. That means there exists some $g\in\mathcal{G}^{gr(A)}$, such that $$\sum_{p'\in \mathrm{Supp}(g)}c_g(p')\cdot(\mathrm{Tip}(g),\pi(p'^{(\alpha,p)}))\neq 0.$$
			
			\begin{itemize}
				\item If $g\in\mathcal{G}^{gr(A)}\cap R_2$, obviously, $(g,\pi(g^{(\alpha,p)}))=0$.
				
				\item If $g\in R_1^1\subseteq\mathcal{G}^{gr(A)}$, then there exists $ v\in V$, such that $l(g)=grd(v)$, thus $(g,\pi(g^{(\alpha,p)}))=0$
				
				\item 	If there exists $ g\in R_1^0\subseteq\mathcal{G}^{gr(A)}$, such that there exists $ p_0\in \mathrm{Supp}(g)$, then $(\mathrm{Tip}(g),\pi(p_0^{(\alpha,p)}))\neq 0$. This implies $l(p)=1$, and consequently, it requires $\alpha=p$. By the form of $\psi_1^{gr(A)}((\alpha,\alpha))$, we know that $(\alpha,\alpha)$ must appear in a summand $t'$ of $t$ and $t'\in kS_1^{gr}$.
				
				\item 	If  there exists $ g\in R_3\subseteq\mathcal{G}^{gr(A)}$ such that  $(g,\pi(g^{(\alpha,p)}))\neq 0$,  then we analyze two possible cases based on whether there exists a vertex $v$ satisfying $C_v^{m(v)}\;|\;g^{(\alpha,p)}$ or not. 
				\begin{itemize}
					\item If $(\alpha, p)$ is a summand of some element in $S_2^{gr}$, there will be an element $t''$ in $kS_2^{gr}$, such that $t''$ is a summand of $t$.
					\item If $(\alpha, p)$ is not a summand of any element in $S_2^{gr}$, then we can just assume $g=\beta\alpha$, $(\beta\alpha,\pi(g^{(\alpha,p)})=(\beta\alpha,\beta p)$. Since $t\in \mathrm{Ker}\psi_1^{gr(A)}$, there exist some $k_1,k_2\in k$ and $(\beta,q)$, such that $k_1(\beta,q)-k_2(\alpha,p)$ is summand of $t$.  However, that means $k_2\beta p=k_1q\alpha$, and we will get $k_1=k_2,p=p_1\alpha,q=\beta p_1$. Since $gr(A)$ is also a special biserial algebra, $(\beta,q)-(\alpha,p)=\psi_0^{gr(A)}(e,p_1)$ for some $e\in Q_0$. That means $(\beta,q)-(\alpha,p)\in S_5^{gr}$.
				\end{itemize}   
			\end{itemize}
			
			Next we assume $\psi_1(\alpha,p)=0$. If $p\mid C_v^{m(v)}$ with $C_v$ being the special cycle which contains $\alpha$, then for the same reason as in Theorem \ref{gen-set of A}, $l(p)$ must bigger than $1$. This implies that $(\alpha,p)$ is contained in $S_4^{gr}$. We consider now the case in which $p\nmid C_v^{m(v)}$ with $p\mid C_w^{m(w)}$. Since there exists $\alpha\beta\in R_3$ with $\beta\mid C_w$, then $(\alpha\beta,\pi(\alpha\beta)^{(\alpha,p)})=0$. Hence there exists $ g\in R_2$ or $g\in R_1^1$, such that $g\mid p\beta$.
			\begin{itemize}
				\item If there exists $ g\in R_2$, such that $g\mid p\beta$, then $l(p\beta)\geq grd(w)+1$, which means $p$ is a cycle and $\alpha$ is a loop. This element is contained in $S_3^{gr}$.
				
				\item If there exists $ g\in R_1^1$, such that $g\mid p\beta$, then $grd(w)-1\leq l(p)\leq grd(w)$. This case is same as the discussion in the first case when $l(p)=grd(w)$. When $l(p)=grd(w)-1$, then $p$ is of the form of $\beta_m\cdots\beta_{1}\cdot C_w(\beta_{1})^{m(w)-1}$. Since $\alpha//p$, then $(\alpha,p)$ is contained in $S_2^{gr}$.
			\end{itemize}
			Therefore, each element $t\in Ker\psi_1^{gr(A)}$ can be represented by a linear combination of the elements in $\mathcal{S}^{gr}$.
		\end{proof}
		
		We also need a lemma to give some connection between $A$ and $gr(A)$.
		
		\begin{lemma}\label{1-1 map A-grA}
			Let $A$ be a BGA associated with a Brauer graph $G=(V,E)$ and $gr(A)$ the associated graded algebra of $A$. Then there is a canonical isomorphism of vector spaces from $A$ to $gr(A)$, which also induces an isomorphism from $\mathrm{Im}\psi_0^A$ to $\mathrm{Im}\psi_0^{gr(A)}$.
		\end{lemma}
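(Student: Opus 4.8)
Here is a plan for proving Lemma \ref{1-1 map A-grA}. I keep it at the level of a sketch, since the statement has a short conceptual core surrounded by a fair amount of Gr\"obner--basis bookkeeping.

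\medskip
\noindent\emph{Step 1: the canonical isomorphism $\phi\colon A\to gr(A)$.} By Lemma \ref{relations of grA} (and the remark after Theorem \ref{Grobner basis of BGA}) the algebras $A=kQ_G/I_G$ and $gr(A)=kQ_G/I_G'$ have the same quiver $Q:=Q_G$, and their reduced Gr\"obner bases $\mathcal{G}^{A}$ and $\mathcal{G}^{gr(A)}$ agree on all relations of types II and III and on the balanced type-I relations, while each unbalanced edge $i\in\mathbb{W}$, with endpoints $v_S^{(i)},v_L^{(i)}$ where $grd(v_S^{(i)})<grd(v_L^{(i)})$, contributes the relation $C_{v_S}^{m(v_S)}-C_{v_L}^{m(v_L)}$ (with $\mathrm{Tip}=C_{v_L}^{m(v_L)}$) to $\mathcal{G}^A$ but the monomial relation $C_{v_S}^{m(v_S)}$ to $\mathcal{G}^{gr(A)}$. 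Using the fact established inside the proof of Theorem \ref{Grobner basis of BGA} (any path strictly containing a special cycle power already lies in $\langle R_2\cup R_3\rangle$), one checks that the monomial bases are related by
$$\mathcal{B}^{gr(A)}=\bigl(\mathcal{B}^{A}\setminus\mathbb{S}\bigr)\cup\mathbb{L},\qquad \mathbb{S}=\{C_{v_S^{(i)}}^{m(v_S^{(i)})}\}_{i\in\mathbb{W}},\quad \mathbb{L}=\{C_{v_L^{(i)}}^{m(v_L^{(i)})}\}_{i\in\mathbb{W}},$$
i.e.\ passing to $gr(A)$ merely swaps each short special cycle power (a nontip in $A$) for the corresponding long one (a nontip in $gr(A)$) and changes nothing else. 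Define $\phi$ to be the $k$-linear map with $\phi|_{\mathcal{B}^{A}\setminus\mathbb{S}}=\mathrm{id}$ and $\phi(C_{v_S^{(i)}}^{m(v_S^{(i)})})=C_{v_L^{(i)}}^{m(v_L^{(i)})}$; it carries $\mathcal{B}^A$ bijectively onto $\mathcal{B}^{gr(A)}$, hence is an isomorphism. It is \emph{canonical} because it is precisely the passage to associated graded for the radical filtration: since the defining relations of $A$ are monomial except the type-I ones, which are either length-homogeneous (balanced) or identify $C_{v_S}^{m(v_S)}$ with the longer monomial $C_{v_L}^{m(v_L)}$ (unbalanced), every element of $\mathcal{B}^A$ has radical degree equal to its length except those in $\mathbb{S}$, which have radical degree $grd(v_L)$; thus $\mathcal{B}^A$ is a basis adapted to the radical filtration and $\phi$ sends $x\in A$ to its image in $\bigoplus_t\mathfrak{r}^t/\mathfrak{r}^{t+1}=gr(A)$. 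In particular $\phi$ respects sources and targets.

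\medskip
\noindent\emph{Step 2: transporting $\mathrm{Im}\psi_0$.} Recall $\mathrm{Im}\psi_0^A\subseteq k(Q_1//\mathcal{B}^A)$ is spanned by the $\psi_0^A(e,\gamma)$ with $\gamma\in\mathcal{B}^A$ a cycle at $e$, and similarly for $gr(A)$. Since $\phi$ fixes all basis elements off $\mathbb{S}$ and sends the cycle $C_{v_S}^{m(v_S)}$ at an unbalanced edge $i$ to the cycle $C_{v_L}^{m(v_L)}$ at the same edge, it induces bijections $Q_j//\mathcal{B}^A\to Q_j//\mathcal{B}^{gr(A)}$ ($j=0,1$), hence isomorphisms $\bar\phi$ of the corresponding spaces of parallel paths. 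The plan is to prove $\bar\phi\circ\psi_0^A=\psi_0^{gr(A)}\circ\bar\phi$, which gives $\bar\phi(\mathrm{Im}\psi_0^A)=\mathrm{Im}\psi_0^{gr(A)}$ and hence the asserted isomorphism. Comparing $\bar\phi(\psi_0^A(e,\gamma))$ with $\psi_0^{gr(A)}(e,\phi(\gamma))$ summand by summand, the needed identity ``$\phi(\pi_A(\alpha\gamma))$ equals the product $\alpha\cdot\phi(\gamma)$ computed in $gr(A)$'' (and its $\gamma\beta$-analogue) holds for every path except when $\alpha\gamma$ is a short special cycle power $w\in\mathbb{S}$, where $\phi(\pi_A(\alpha\gamma))=\overline{C_{v_L}^{m(v_L)}}\neq 0$ but $\alpha\cdot\phi(\gamma)=0$ in $gr(A)$. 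When that exception occurs with $\gamma$ a cycle at $e$, the arrow $\alpha$ is necessarily a loop at $e$; writing $\gamma=w$ with its last arrow $\alpha$ deleted, $\gamma\alpha$ is a cyclic rotation of $w$ that is again a cycle at $e$, hence again a special cycle power over the same unbalanced edge, and therefore reduces in $A$ to the same basis element as $w$ (and to $0$ in $gr(A)$). Consequently the discrepant summand $(\alpha,\pi_A(w))$ enters $\psi_0^A(e,\gamma)$ with coefficient $+1$ (from the $\alpha\gamma$-part) and $-1$ (from the $\gamma\beta$-part, with $\beta=\alpha$) and cancels, and correspondingly the two copies on the $gr(A)$ side are both $0$; every other summand matches under $\phi$. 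This yields the commuting square and finishes the proof.

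\medskip
\noindent\emph{Where the work is.} The main obstacle is the bookkeeping in Step 1 — verifying that the monomial bases differ \emph{only} by the swap $\mathbb{S}\leftrightarrow\mathbb{L}$ (in particular when $i$ is a loop in $G$, where an unbalanced edge may give rise to several type-I relations, one must identify which special cycle power survives in $\mathcal{B}^A$), together with the compatibility of $\mathcal{B}^A$ with the radical filtration. The other delicate point is the summand-by-summand comparison in Step 2: pinning down the unique discrepant summand and checking that the rotation argument always forces it to cancel inside $\psi_0$. (The standing hypothesis $\operatorname{char}k=0$ from Section~5.1 is not needed for this lemma; any characteristic admissible for Proposition \ref{BGA L-1} would do.)
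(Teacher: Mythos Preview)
Your proposal is correct and follows essentially the same approach as the paper: the paper defines $\phi$ exactly as your $\mathbb{S}\leftrightarrow\mathbb{L}$ swap (identity elsewhere) and then simply asserts that the induced map $\hat\phi$ on $Q_1//\mathcal{B}$ restricts to an isomorphism $\mathrm{Im}\,\psi_0^A\cong\mathrm{Im}\,\psi_0^{gr(A)}$, without further justification. Your Step~2 actually supplies that justification via the commuting square $\bar\phi\circ\psi_0^A=\psi_0^{gr(A)}\circ\bar\phi$, and your canonicity remark (identifying $\phi$ with the passage to the associated graded) is an addition not present in the paper. One small point worth making explicit in your cancellation argument: when $\alpha\gamma=C_{v_S}^{m(v_S)}$ with $\gamma$ a cycle, the fact that the unbalanced edge $i$ cannot be a loop in $G$ forces $val(v_S)=1$, so $C_{v_S}=\alpha$ and hence $\gamma\alpha=\alpha^{m(v_S)}=w$ \emph{as a path}; this is what makes ``reduces to the same basis element'' go through.
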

		
		\begin{proof}
			For $A=kQ/I$ and $gr(A)=kQ/I'$, by the property of Gr\"{o}bner basis, under the same length-lexicographic order in $Q_{\geq 0}$, we can find a $k$-basis of $A$ (respectively, of $gr(A)$) in $Q_{\geq 0}$ if we fix the natural Gr{\"o}bner basis in $I$ (respectively, in $I'$). By Lemma \ref{relations of grA}, define a map $\phi$ from the $k$-basis of $A$ to the $k$-basis of $gr(A)$ by the following rules:
			\begin{itemize}
				\item If $v_L\stackrel{i}{-}v_S$ in $G$ is an unbalanced edge, then $C_{v_L}(\alpha)^{m(v_L)}-C_{v_S}(\beta)^{m(v_S)}\in\mathcal{G}$. Define $$\phi(C_{v_S}(\beta)^{m(v_S)})=C_{v_L}(\alpha)^{m(v_L)};$$
				
				\item Otherwise, $\phi$ is the identity morphism on the elements in the basis of $A$ which are different from above case.
			\end{itemize}
			Thus $\phi$ gives an isomorphism from $A$ to $gr(A)$.
			
			Now define the map $\hat{\phi}:Q_1//\mathcal{B}^A\rightarrow Q_1//\mathcal{B}^{gr(A)}$, $(\alpha,p)\mapsto(\alpha,\phi(p))$. Obviously, $\hat{\phi}$ is also an isomorphism. Moreover,  $\hat{\phi}|_{\mathrm{Im}\psi_0^A}$ and $\hat{\phi}^{-1}|_{\mathrm{Im}\psi_0^{gr(A)}}$ induce an isomorphism between $\mathrm{Im}\psi_0^A$ and $\mathrm{Im}\psi_0^{gr(A)}$.	
		\end{proof}
		
		Now we prove the main result of this subsection.
		
		\begin{theorem}\label{inj-map}
			Let $A$ be a BGA associated with a Brauer graph $G$, and $gr(A)$ the associated graded algebra of $A$. If $G\neq (v_S{-}v_L)$ with $m(v_L)>m(v_S)\geq 2$,  then there is a monomorphism $i$ from $\mathrm{HH}^1(A)$ to $\mathrm{HH}^1(gr(A))$ as Lie algebras.
		\end{theorem}
		
		\begin{proof}
			By Theorem \ref{gen-set of A} and Lemma \ref{gen-set of grA}, denote the $k$-bases of $\mathrm{Ker}\psi_1$, $\mathrm{Ker}\psi_1^{gr(A)}$ by the sets $\mathcal{B}_{L,Ker}=S_1\cup S_2\cup S_3\cup S_4\cup S_5$, $\mathcal{B}_{L,Ker}^{gr(A)}=S_1^{gr}\cup S_2^{gr}\cup S_3^{gr}\cup S_4^{gr}\cup S_5^{gr}$, respectively. By the correspondence in Lemma \ref{1-1 map A-grA}, we can choose $S_5$ and $S_5^{gr}$ such that $\hat{\phi}$ sends $S_5$ to $S_5^{gr}$.   Denote the morphism $i$ by:
			\begin{itemize}
				\item $i_1:S_1\rightarrow S_1^{gr}$, the natural embedding morphism, due to the basis of $\langle (\alpha,\alpha)|\alpha\in Q_1\rangle\cap \mathrm{Ker}\psi_1$ is the linear combinations of the basis of $\langle (\alpha,\alpha)|\alpha\in Q_1\rangle\cap \mathrm{Ker}\psi_1^{gr(A)}$.
				
				\item $i_2:S_2\rightarrow S_2^{gr}$, let $e=(\beta_0,\alpha_{k-1}\cdots\alpha_{0}\cdot C_v(\alpha_{0})^{m(v)-1})-(\alpha_{k},\beta_m\cdots\beta_{1}\cdot C_w(\beta_{1})^{m(w)-1})\in S_2$, then
				$$e\mapsto \left\{
				\begin{array}{*{3}{lll}}
					e& , & grd(v)=grd(w),\\
					-(\alpha_{k},\beta_m\cdots\beta_{1}\cdot C_w(\beta_{1})^{m(w)-1})& , & grd(v)>grd(w),\\
					(\beta_0,\alpha_{k-1}\cdots\alpha_{0}\cdot C_v(\alpha_{0})^{m(v)-1})&,&grd(v)<grd(w).
				\end{array}
				\right.$$
				
				\item $i_3:S_3\cup S_4\rightarrow S_3^{gr}\cup S_4^{gr}$, $(\alpha,p)\mapsto(\alpha,\phi(p))$, which $\phi$ is the one-to-one morphism between $A$ and $gr(A)$ in Lemma \ref{1-1 map A-grA}.
				
				\item $i_4:=\hat{\phi}|_{S_5}$.
			\end{itemize}
			Then $i=i_1\cup i_2\cup i_3\cup i_4$ is an injection from $\mathcal{B}_{L,Ker}$ to $\mathcal{B}_{L,Ker}^{gr(A)}$ by the definition of $S_i$ and $S_i^{gr}$, $i=1,2,3,4,5$. Moreover, $i_2,i_3,i_4$ are bijections. By Lemma \ref{1-1 map A-grA}, $i(\mathrm{Im}\psi_0^A)=\mathrm{Im}\psi_0^{gr(A)}$. Therefore, $i$ induces an injection from $\mathrm{HH}^1(A)$ to $\mathrm{HH}^1(gr(A))$. We will prove that $i$ is a monomorphism of Lie algebras.
			
			First of all, consider $\mathrm{ad}(r):=[r,-]$ with $r\in S_1$, then by the definition of $i$, $i(r)\in S_1^{gr}$. Since the restriction $i|_{S_1}$ is the natural embedding morphism, we have $r=i(r)$ in the vector space $k(Q_1//Q_1)$. Since $L_{00},L_{00}^{gr(A)}$ are solvable Lie ideals of $A,gr(A)$, respectively, it is easy to check that $\mathrm{ad}(r)|_{S_1}$ and $\mathrm{ad}(i(r))|_{S_1^{gr}}$ are zero morphisms. Thus $[i(r_1),i(r_2)]=i([r_1,r_2])=0$, $r_1,r_2\in S_1$.
			For $r'\in S_2$, let $$r'=(\beta_0,\alpha_{k-1}\cdots\alpha_{0}\cdot C_v(\alpha_{0})^{m(v)-1})-(\alpha_{k},\beta_m\cdots\beta_{1}\cdot C_w(\beta_{1})^{m(w)-1})$$ where $C_v$ and $C_w$ are different special cycles. For simplicity, let $\hat{\beta_0}=(\beta_0,\alpha_{k-1}\cdots\alpha_{0}\cdot C_v(\alpha_{0})^{m(v)-1})$, $\hat{\alpha_k}=(\alpha_{k},\beta_m\cdots\beta_{1}\cdot C_w(\beta_{1})^{m(w)-1})$, then $r'=\hat{\beta_0}-\hat{\alpha_k}$. Since $r\in S_1$, $r=(\alpha_{0},\alpha_{0})-(\alpha_{i},\alpha_{i})$ or $r=\sum_{v\in V,\alpha_v\mid C_v}k_v(\alpha_v,\alpha_v)$ with $k_v=\prod_{v'\in V,v'\neq v}m(v')$ by Lemma \ref{dimL00 of A}. Moreover,
			
			\begin{itemize}
				\item If $r=(\alpha_{0},\alpha_{0})-(\alpha_{i},\alpha_{i})$, then
				$$[r,r']= \left\{
				\begin{array}{*{3}{lll}}
					(m(v)-1)(\hat{\beta_0}-\hat{\beta_0})=0& , &i\neq k \\
					\{m(v)-(m(v)-1)\}\hat{\beta_0}-\hat{\alpha_k}=r'& , & i=k
				\end{array}
				\right.$$
				When $r'=i(r')$ in $k(Q_1//Q_{\geq 0})$, $i([r,r'])=[i(r),i(r')]$ obviously. When $r'\neq i(r')$, without loss of generality, let $i(r')=\hat{\beta_0}$. Then
				$$[i(r),i(r')]= \left\{
				\begin{array}{*{3}{lll}}
					(m(v)-1)(\hat{\beta_0}-\hat{\beta_0})=0& , &i\neq k \\
					\{m(v)-(m(v)-1)\}\hat{\beta_0}=i(r')& , & i=k
				\end{array}
				\right.$$
				Thus $i([r,r'])=[i(r),i(r')]$.
				
				\item If $r=(\beta_0,\beta_0)-(\beta_j,\beta_j)$, then
				$$[r,r']= \left\{
				\begin{array}{*{3}{lll}}
					0& , &j\neq m \\
					r'& , & j=m
				\end{array}
				\right.$$
				and it can be checked that $i([r,r'])=[i(r),i(r')]$.
				
				\item If $r=\sum_{v\in V,\alpha_v\mid C_v}k_v(\alpha_v,\alpha_v)$ with $k_v=\prod_{v'\in V,v'\neq v}m(v')$, when $\beta_0,\alpha_k$ are not loops, we have that
				$$[r,r']=\prod_{v\in V}m(v)\hat{\beta_0}-(\prod_{v\in V}m(v)-k_w)\hat{\alpha_k}-k_w\hat{\beta_0}=(\prod_{v\in V}m(v)-k_w)r'.$$
				If $\alpha_k$ is a loop, by the definition of the elements in $S_2$, then $\beta_0$ is also a loop, and
				$$[r,r']=(m(v)-1)k_v\hat{\beta_0}-(m(w)-1)k_w\hat{\alpha_k}-k_w\hat{\beta_0}+k_v\hat{\alpha_k}=(\prod_{v\in V}m(v)-k_v-k_w)r'.$$
				Thus $i([r,r'])=[i(r),i(r')]=c\cdot i(r'), c\in k$.
				
				\item Otherwise, if $r\in S_1$ and $r$ is not of the forms as above, then $[r,r']=[i(r),i(r')]=0$.
			\end{itemize}
			Therefore, $i([r,r'])=[i(r),i(r')]$ when $r\in S_1, r'\in S_2$.
			
			If $r'\in S_3\cup S_4$, then $r'=(\alpha,p)$ with $p\mid C_w,\alpha\mid C_v$, $v,w\in V$.  Since the one-to-one map $\phi$ only changes the cycles around the unbalanced edges, we only need to consider $r'=(\beta,C_v(\alpha)^{m(v)})$ with $i(r')=(\beta,C_w(\beta)^{m(w)})$. However, because of $\beta//C_v^{m(v)}$, $\beta$ is a loop.
			
			\begin{itemize}
				\item If $r=(\alpha_{0},\alpha_{0})-(\alpha_{i},\alpha_{i})$, then $[r,r']=m(v)(r'-r')=0$, $[i(r),i(r')]=0$.
				\item If $r=\sum_{v\in V,\alpha_v\mid C_v}k_v(\alpha_v,\alpha_v)$ with $k_v=\prod_{v'\in V,v'\neq v}m(v')$, then $[r,r']=(\prod_{v\in V}m(v)-k_w)r'$, $[i(r),i(r')]=(\prod_{v\in V}m(v)-k_w)i(r')$.
			\end{itemize}
			Therefore, we have proved that $i([r,r'])=[i(r),i(r')]$ when $r\in S_1, r'\in S$.
			
			Secondly, consider $ad(r)$ with $r\in S_2$, then by the definition of $i$, we have $i(r)\in S_2^{gr}$. Let $$r=(\beta_0,\alpha_{k-1}\cdots\alpha_{0}\cdot C_v(\alpha_{0})^{m(v)-1})-(\alpha_{k},\beta_m\cdots\beta_{1}\cdot C_w(\beta_{1})^{m(w)-1})\in S_2,$$ $$r'=(\beta_t,\gamma_{n-1}\cdots\gamma_0\cdot C_u(\gamma_0)^{m(u)-1})-(\gamma_n,\beta_{t-1}\cdots\beta_{t+1}\cdot C_w(\beta_{t+1})^{m(w)-1})\in S_2.$$ That means
			$$[r,r']=-(\alpha_k,\pi((\beta_m\cdots\beta_{1}\cdot C_w(\beta_{1})^{m(w)-1})^{(\beta_t,\gamma_{n-1}\cdots\gamma_0\cdot C_u(\gamma_0)^{m(u)-1})}))$$$$+(\gamma_n,\pi((\beta_{t-1}\cdots\beta_{t+1}\cdot C_w(\beta_{t+1})^{m(w)-1})^{(\beta_0,\alpha_{k-1}\cdots\alpha_{0}\cdot C_v(\alpha_{0})^{m(v)-1})}))$$
			Since $\gamma_n,\beta_t$ and $\beta_0,\alpha_k$ are arrows induced by subgraphs as in Figure \ref{S2-subgraph}, this implies that $grd(w)>2$ and $[r,r']=0$. It can be checked $[i(r),i(r')]=0$ in the same way.
			
			Let $r'\in S_3\cup S_4$. It is enough to consider the following two cases:
			\begin{itemize}
				\item Let $r'=(\beta,C_v(\alpha)^{m(v)})$ where $\beta$ is a loop and $\phi(C_v(\alpha)^{m(v)})=\beta^{m(v)}$. Since $G\neq (v_S{-}v_L)$, then $r=(\beta_0,\alpha_{k-1}\cdots\alpha_{0}\cdot C_v(\alpha_{0})^{m(v)-1})-(\alpha_{k},\beta_m\cdots\beta_{1}\cdot C_w(\beta_{1})^{m(w)-1})$ where $C_v$ and $C_w$ are not loops, thus $\beta\nmid C_v$ and $\beta\nmid C_w$. Therefore, $[r,r']=0$ and $[i(r),i(r')]=0$.
				\item Let $r'=(\beta,\beta^{m(v)})$ where $\beta$ is a loop and $\phi(\beta^{m(v)})=C_v(\alpha)^{m(v)}$. Then, as in the first case, we have $[r,r']=0$ and $[i(r),i(r')]=0$.
			\end{itemize}
			Therefore, $i([r,r'])=[i(r),i(r')]$ when $r\in S_2, r'\in S$.
			
			Now let us consider $r,r'\in S_3\cup S_4$. Then $r,r'$ are of the form of $(\alpha,p)$ with $l(p)>1$. Without loss of generality, it is enough to consider $r$ in the following forms: $r=(\beta,C_v(\alpha)^{m(v)})$ or $r=(\beta,\beta^{m(v)})$. Since $r'=(\alpha,p)$ with $l(p)>1$, we have $[r,r']=[i(r),i(r')]=0$.
			
			To sum it up, for all $r,r'\in \mathcal{S}$, we have $i([r,r'])=[i(r),i(r')]$. Hence $i$ is a monomorphism of Lie algebras from $\mathrm{HH}^1(A)$ to $\mathrm{HH}^1(gr(A))$.
		\end{proof}
		
		\begin{remark}
			Let $A$ be a BGA with its corresponding Brauer graph $G\neq (v_S{-}v_L)$ with $m(v_L)>m(v_S)\geq 2$. By Theorem \ref{inj-map}, if $\mathrm{HH}^1(gr(A))$ is solvable, so is $\mathrm{HH}^1(A)$.
		\end{remark}
		
		Although the injection above is not always a monomorphism of Lie algebras, it is enough to help us compute the difference between the dimensions of $\mathrm{HH}^1(A)$ and $\mathrm{HH}^1(gr(A))$.
		
		\begin{corollary}\label{dim(A-grA)}
			Let $A$ be a BGA associated with a Brauer graph $G=(V,E)$ and $gr(A)$ the associated graded algebra of $A$. Then $\mathrm{dim}_k\mathrm{HH}^1(gr(A))-\mathrm{dim}_k\mathrm{HH}^1(A)=|\Gamma_G|-1$.
		\end{corollary}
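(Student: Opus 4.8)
The plan is to reduce the statement to an elementary count of the explicit bases of $\mathrm{Ker}\psi_1$ and $\mathrm{Ker}\psi_1^{gr(A)}$ produced in Theorem \ref{gen-set of A} and Lemma \ref{gen-set of grA}, together with the dictionary between $A$ and $gr(A)$ set up in Lemma \ref{1-1 map A-grA}. Since $\mathrm{HH}^1(A)\cong\mathrm{Ker}\psi_1/\mathrm{Im}\psi_0$ and $\mathrm{HH}^1(gr(A))\cong\mathrm{Ker}\psi_1^{gr(A)}/\mathrm{Im}\psi_0^{gr(A)}$ as $k$-spaces, it is enough to control the two differences $\mathrm{dim}_k\mathrm{Ker}\psi_1^{gr(A)}-\mathrm{dim}_k\mathrm{Ker}\psi_1$ and $\mathrm{dim}_k\mathrm{Im}\psi_0^{gr(A)}-\mathrm{dim}_k\mathrm{Im}\psi_0$ separately. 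The second one vanishes: Lemma \ref{1-1 map A-grA} provides a canonical vector-space isomorphism $A\xrightarrow{\ \sim\ }gr(A)$ whose induced map $\hat\phi$ restricts to an isomorphism $\mathrm{Im}\psi_0^A\xrightarrow{\ \sim\ }\mathrm{Im}\psi_0^{gr(A)}$.

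For the first difference I would use the decompositions $\mathrm{dim}_k\mathrm{Ker}\psi_1=|S_1|+|S_2|+|S_3|+|S_4|+|S_5|$ and $\mathrm{dim}_k\mathrm{Ker}\psi_1^{gr(A)}=|S_1^{gr}|+|S_2^{gr}|+|S_3^{gr}|+|S_4^{gr}|+|S_5^{gr}|$ coming from the two bases. The maps $i_2\colon S_2\to S_2^{gr}$, $i_3\colon S_3\cup S_4\to S_3^{gr}\cup S_4^{gr}$ and $i_4\colon S_5\to S_5^{gr}$ constructed inside the proof of Theorem \ref{inj-map} are bijections between the corresponding families of basis vectors; this part of that argument is a matching of the combinatorial data attached to the vertices and arrows of $Q_G$ and does not use the hypothesis on $G$ there (which is only needed for the Lie-bracket compatibility). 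Hence $|S_j^{gr}|=|S_j|$ for $j=2,5$ and $|S_3^{gr}|+|S_4^{gr}|=|S_3|+|S_4|$, so that $\mathrm{dim}_k\mathrm{Ker}\psi_1^{gr(A)}-\mathrm{dim}_k\mathrm{Ker}\psi_1=|S_1^{gr}|-|S_1|$.

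It then remains to compute $|S_1^{gr}|-|S_1|$. By definition $|S_1|=\mathrm{dim}_k\big(\langle(\alpha,\alpha)\mid\alpha\in Q_1\rangle\cap\mathrm{Ker}\psi_1\big)$ and $|S_1^{gr}|=\mathrm{dim}_k\big(\langle(\alpha,\alpha)\mid\alpha\in Q_1\rangle\cap\mathrm{Ker}\psi_1^{gr(A)}\big)$; both spaces contain the $(|E|-1)$-dimensional subspace $\langle\sum_{a\in Q_1e}(a,a)-\sum_{b\in eQ_1}(b,b)\mid e\in Q_0\rangle$, so $|S_1^{gr}|-|S_1|=\mathrm{dim}_k L_{00}^{gr(A)}-\mathrm{dim}_k L_{00}$. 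Now Lemma \ref{dimL00 of A} gives $\mathrm{dim}_k L_{00}=|E|-|V|+2$ and Lemma \ref{dimL00 of grA} gives $\mathrm{dim}_k L_{00}^{gr(A)}=|E|-|V|+1+|\Gamma_G|$, whence $|S_1^{gr}|-|S_1|=|\Gamma_G|-1$. Combining the two steps,
$$\mathrm{dim}_k\mathrm{HH}^1(gr(A))-\mathrm{dim}_k\mathrm{HH}^1(A)=\big(|S_1^{gr}|-|S_1|\big)-0=|\Gamma_G|-1.$$

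I expect the main (though essentially bookkeeping) obstacle to be justifying that the bijections $S_j\leftrightarrow S_j^{gr}$ for $j\neq 1$ are valid for \emph{every} Brauer graph $G$, including the single exceptional family $G=(v_S{-}v_L)$ with $m(v_L)>m(v_S)\geq 2$ that Theorem \ref{inj-map} sets aside. One should either revisit the definitions of the five families so as to read off the matching directly, or simply verify the identity in that exceptional family by hand: there $A\cong k\langle x,y\rangle/\langle x^{m(v_L)}-y^{m(v_S)},\,x^{m(v_L)+1},\,y^{m(v_S)+1},\,xy,\,yx\rangle$, its associated graded algebra is the monomial algebra obtained by replacing $x^{m(v_L)}-y^{m(v_S)}$ by $y^{m(v_S)}$, and $|\Gamma_G|=2$, so the asserted difference $1$ follows from a direct computation of $\psi_0,\psi_1$ and $\psi_0^{gr(A)},\psi_1^{gr(A)}$.
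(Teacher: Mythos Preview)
Your proposal is correct and follows essentially the same route as the paper: reduce to $\mathrm{Ker}\psi_1$ versus $\mathrm{Ker}\psi_1^{gr(A)}$ using Lemma \ref{1-1 map A-grA} to cancel the $\mathrm{Im}\psi_0$ terms, then invoke the bijections $i_2,i_3,i_4$ from the proof of Theorem \ref{inj-map} to conclude that the discrepancy sits entirely in $|S_1^{gr}|-|S_1|=\dim_k L_{00}^{gr(A)}-\dim_k L_{00}$, and finish with Lemmas \ref{dimL00 of A} and \ref{dimL00 of grA}. Your remark that the bijections $i_2,i_3,i_4$ are purely combinatorial and do not rely on the excluded case $G=(v_S{-}v_L)$ is exactly right; the paper uses this implicitly (the exclusion in Theorem \ref{inj-map} is only for the Lie-bracket compatibility), so your explicit flagging of this point is, if anything, more careful than the paper's own argument.
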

		
		\begin{proof}
			By the proof of the theorem above, $i$ is an injection from the basis of $\mathrm{Ker}\psi_1$ to the basis of $\mathrm{Ker}\psi_1^{gr(A)}$. Moreover, $i$ is a bijection between $S_2\cup S_3\cup S_4\cup S_5$ and $S_2^{gr}\cup S_3^{gr}\cup S_4^{gr}\cup S_5^{gr}$. By Lemma \ref{1-1 map A-grA}, we have $i(\mathrm{Im}\psi_0^A)=\mathrm{Im}\psi_0^{gr(A)}$. Therefore, by Lemma \ref{dimL00 of A} and Lemma \ref{dimL00 of grA}, we have
			$$
			\begin{array}{*{3}{lll}}
				\mathrm{dim}_k\mathrm{HH}^1(gr(A))-\mathrm{dim}_k\mathrm{HH}^1(A) & =&\mathrm{dim}_k\mathrm{Ker}\psi_1^{gr(A)}-\mathrm{dim}_k\mathrm{Im}\psi_0^{gr(A)}-\mathrm{dim}_k\mathrm{Ker}\psi_1^A+\mathrm{dim}_k\mathrm{Im}\psi_0^A\\
				& =&|S_1^{gr}|-|S_1|\\
				&=& \mathrm{dim}_kL_{00}^{gr(A)}-\mathrm{dim}_kL_{00}\\
				&=&|\Gamma_G|-1.
			\end{array}
			$$
		\end{proof}
		
		Now let us check some examples to verify the results we obtained above.
		
		\begin{example}Consider the Brauer graph $G$ in following form.
			$$
			\begin{tikzcd}
				G: & \bullet \arrow[r, no head] & \bullet \arrow[r, no head] & {\bullet[3]} &  & Q_G: & \bullet \arrow[r, "\alpha_1", shift left] & \bullet \arrow[l, "\alpha_2", shift left] \arrow["\beta"', loop, distance=2em, in=35, out=325]
			\end{tikzcd}$$
			The corresponding BGA of Brauer graph $G$ is $$A=kQ_G/\langle \beta^3-\alpha_{1}\alpha_{2},\alpha_{1}\alpha_{2}\alpha_{1},\alpha_{2}\alpha_{1}\alpha_{2},\beta\alpha_{1},\alpha_{2}\beta\rangle,$$ and the associated graded algebra of $A$ is $$gr(A)=kQ_G/\langle\beta^4,\alpha_{1}\alpha_{2},\beta\alpha_{1},\alpha_{2}\beta\rangle.$$ By the parallel paths method,
			$$\mathrm{HH}^1(A)=k\{3(\alpha_{1},\alpha_{1})+(\beta,\beta),\;(\beta,\beta^2),\;(\beta,\alpha_{1}\alpha_{2})\}$$
			$$\mathrm{HH}^1(gr(A))=k\{(\alpha_{1},\alpha_{1}),\;(\beta,\beta),\;(\beta,\beta^2),\;(\beta,\beta^3)\}$$
			Then $\mathrm{dim}_k\mathrm{HH}^1(gr(A))-\mathrm{dim}_k\mathrm{HH}^1(A)=1$. And the monomorphism $i$ is given by:
			$$	\begin{array}{*{5}{lllll}}
				i& :&3(\alpha_{1},\alpha_{1})+(\beta,\beta)&\mapsto&3(\alpha_{1},\alpha_{1})+(\beta,\beta),\\
				&&(\beta,\beta^2)&\mapsto&(\beta,\beta^2),\\
				& &(\beta,\alpha_{1}\alpha_{2})&\mapsto&(\beta,\beta^3).
			\end{array}$$
		\end{example}

		\begin{example}\label{not injection} This is an example of the case which is excluded in Theorem \ref{inj-map}: 
			$$
			\begin{tikzcd}
				G: & \bullet[2] \arrow[r, no head] & \bullet[3] &  & Q_G:& & \bullet \arrow["y"', loop, distance=2em, in=35, out=325] \arrow["x"', loop, distance=2em, in=215, out=145]
			\end{tikzcd}$$
			The corresponding BGA of Brauer graph $G$ is $$A=k\langle x,y\rangle /\langle y^3-x^2,x^3,xy,yx\rangle,$$ and the associated graded algebra of $A$ is $$gr(A)=k\langle x,y\rangle/\langle y^4,x^2,xy,yx\rangle.$$ By the parallel paths method,
			$$\mathrm{HH}^1(A)=k\{3(x,x)+2(y,y),\;(x,x^2),\;(y,y^2),\;(y,x^2),\;(x,y^2)-(y,x)\}$$
			$$\mathrm{HH}^1(gr(A))=k\{(x,x),\;(y,y),\;(x,y^3),\;(y,y^2),\;(y,y^3),\;(y,x)\}$$
			Then $\mathrm{dim}_k\mathrm{HH}^1(gr(A))-\mathrm{dim}_k\mathrm{HH}^1(A)=1$, but since $i((y,y^2))=(y,y^2)$, $i((x,y^2)-(y,x))=-(y,x)$, $i((x,x^2))=(x,y^3)$, we have
			$$i([(y,y^2),(x,y^2)-(y,x)])=i(2(x,x^2))=2(x,y^3),$$
			$$[i((y,y^2)),i((x,y^2)-(y,x))]=[(y,y^2),-(y,x)]=0,$$
			$i$ is not a morphism of Lie algebras. If there is a monomorphism from $\mathrm{HH}^1(A)$ to $\mathrm{HH}^1(gr(A))$, then $\mathrm{HH}^1(A)$ can be regarded as a Lie subalgebra of $\mathrm{HH}^1(gr(A))$. That makes the derived Lie subalgebra $\mathrm{HH}^1(A)^{(2)}$ a subspace of $\mathrm{HH}^1(gr(A))^{(2)}$. However,
			$$\mathrm{HH}^1(A)^{(2)}=k\{(y,x^2),(x,x^2)\},$$
			$$\mathrm{HH}^1(gr(A))^{(2)}=k\{(y,y^3)\}.$$
			Thus $\mathrm{dim}_k(\mathrm{HH}^1(A)^{(2)})=2>1=\mathrm{dim}_k(\mathrm{HH}^1(gr(A))^{(2)})$, a contradiction.
		\end{example}
		
		\subsection{A discussion related to $\mathrm{Out}(A)^{\circ}$}
		
		In this subsection, we assume that $k$ is an algebraically closed field of characteristic $0$. 
		
		Let $A$ be a BGA with its corresponding Brauer graph $G=(V,E)$. Then for the vector space $L_{00}$ of $A$ defined in Section 3.3, by Lemma \ref{dimL00 of A}, $$\mathrm{dim}_kL_{00}=|E|-|V|+2.$$ It is interesting to note that Antipov and Zvonareva have obtained the same number in \cite{AZ}. Denote by $T(A)$ the maximal torus of the identity component $\mathrm{Out}(A)^{\circ}$ of the group of the outer automorphisms for an algebra $A$.
		By \cite[Theorem 1.1]{AZ}, if the Brauer graph $G$ has at least two edges and is not a caterpillar (c.f. \cite[Section 3]{AZ}), then the rank of $T(A)$ is $|E|-|V|+2$.
		
		Now we use a construction by Briggs and Rubio y Degrassi in \cite{BR} to discuss the connection between $T(A)$ and $L_{00}$. Recall some definitions.
		
		\begin{definition}$(\cite[~Definition~2.2]{BR})$
			Let $A$ be a finite dimensional algebra over an algebraically closed field $k$. An element $f\in \mathrm{HH}^1(A)$ is called diagonalizable if it can be represented by a derivation $d\in \mathrm{Der}(A)$ which acts diagonalizably on $A$, with respect to some $k$-linear basis of $A$. More generally, we say that a subspace $S\subseteq \mathrm{HH}^1(A)$ is diagonalizable if its elements can be represented by derivations which are simultaneously diagonalizable on $A$. Note that $S$ is automatically a Lie subalgebra of $\mathrm{HH}^1(A)$ since $[S,S]=0$. The maximal diagonalizable subalgebras are by definition diagonalizable subalgebras which are maximal with respect to the inclusion. 
		\end{definition}
		
		By \cite[Proposition~3.1]{Strametz}, the Lie algebra of the identity component of the outer automorphism group of $A$ is isomorphic to $\mathrm{HH}^1(A)$. Thus the rank of the maximal torus of $\mathrm{Out}(A)^{\circ}$ is the maximal toral rank of $\mathrm{HH}^1(A)$, and by \cite[Proposition 2.3]{BR}, it is equal to the dimension of the maximal diagonalizable subalgebra of $\mathrm{HH}^1(A)$. Obviously, $L_{00}$ is a diagonalizable subalgebra of $\mathrm{HH}^1(A)$, we will prove that it is maximal for any BGA and its associated graded algebra.
		
		\begin{proposition}\label{diag}
			Let $A$ be a BGA and $gr(A)$ the associated graded algebra of $A$. Then $L_{00}$ (respectively, $L_{00}^{gr(A)}$) is a maximal diagonalizable subalgebra of $\mathrm{HH}^1(A)$ (respectively, $\mathrm{HH}^1(gr(A))$).
		\end{proposition}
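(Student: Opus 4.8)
The plan is to prove something slightly stronger, namely that the centraliser of $L_{00}$ inside the Lie algebra $\mathrm{HH}^1(A)$ (with the bracket of Theorem~\ref{gen-lie bracket}) is equal to $L_{00}$ itself. Granting this, maximality is immediate: any diagonalisable subalgebra $S$ with $L_{00}\subseteq S$ is abelian (since $[S,S]=0$ for diagonalisable $S$), so $S\subseteq C_{\mathrm{HH}^1(A)}(L_{00})=L_{00}$, hence $S=L_{00}$. The same strategy, with the basis of Lemma~\ref{gen-set of grA} replacing that of Theorem~\ref{gen-set of A}, will handle $gr(A)$, and I would also note the alternative homogeneous argument for $gr(A)$ described below.

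First I would record that $L_{00}$ is indeed diagonalisable: each generator $(\alpha,\alpha)$ is represented by the derivation $d_{\alpha,\alpha}$ of $A$ sending a monomial basis element $p\in\mathcal{B}=\mathrm{NonTip}(\mathcal G)$ to (number of occurrences of $\alpha$ in $p$) times $p$; thus all the $d_{\alpha,\alpha}$, and hence every element of $L_{00}$, are simultaneously diagonal in the basis $\mathcal{B}$. Moreover, since these representing derivations act semisimply on $A$, the operators $\mathrm{ad}(d)$, $d\in L_{00}$, act semisimply on $\mathrm{HH}^1(A)$, so $\mathrm{HH}^1(A)$ decomposes into simultaneous weight spaces for $\mathrm{ad}(L_{00})$, the zero weight space being $C_{\mathrm{HH}^1(A)}(L_{00})\supseteq L_{00}$. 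The induced basis $\mathcal{B}_L$ of Theorem~\ref{gen-set of A} consists of weight vectors, because $L_{00}$ (spanned by $S_1$) acts diagonally on it.

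The heart of the proof is therefore the computation of $[r,r']$ for $r$ ranging over the spanning set $S_1$ of Lemma~\ref{dimL00 of A} and $r'$ over the remaining basis vectors of $\mathcal{B}_L$, i.e.\ the classes coming from $S_2\cup S_3\cup S_4$. Using the bracket formula one checks that for such $r'$ the bracket $[r,r']$ is always a scalar multiple of $r'$, and that this scalar can be made nonzero by a suitable choice of $r\in S_1$ in characteristic $0$. Concretely, for $r=\sum_{v\in V^{*}}k_v(\alpha_v,\alpha_v)$ one gets factors of the form $\prod_{u\neq w}m(u)\,(m(w)-1)$ (for $r'\in S_2$ of shape $\hat\beta_0-\hat\alpha_k$ and for $r'=(\beta,C_v(\alpha)^{m(v)})\in S_3$) or $k_v\,(\#\alpha_v\text{ in }p-1)$ (for $r'=(\alpha,p)\in S_4$), while elements $r=(\alpha_0,\alpha_0)-(\alpha_i,\alpha_i)\in S_1$ give factors like $\pm1$; one avoids the vanishing cases using Lemma~\ref{loop-power} (every loop of $Q_G$ sits at a vertex of multiplicity $>1$, so $m(w)-1\neq0$ whenever $\beta$ is a loop) and the freedom to choose $\alpha_v\in C_v$ (or $\alpha_i$) off the support of $p$. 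Most of these computations already appear in the proofs of Lemma~\ref{dimL00 of A} and Theorem~\ref{inj-map}, so I would mainly reorganise them and extract the genericity of the eigenvalue. It follows that every non-$L_{00}$ vector of $\mathcal{B}_L$ has nonzero $\mathrm{ad}(L_{00})$-weight, hence $C_{\mathrm{HH}^1(A)}(L_{00})=L_{00}$, and maximality follows. For $gr(A)$ one repeats the same steps with $S_i^{gr}$ and Lemma~\ref{gen-set of grA}; alternatively, since $I'$ is homogeneous, one uses the grading $\mathrm{HH}^1(gr(A))=\bigoplus_{i\ge0}L_i^{gr(A)}$ and the Euler class $\mathrm{eu}=\sum_{\alpha\in Q_1}(\alpha,\alpha)\in L_{00}^{gr(A)}$, for which $[\mathrm{eu},(\alpha,\gamma)]=(l(\gamma)-1)(\alpha,\gamma)$, so $[\mathrm{eu},-]$ is invertible on $\bigoplus_{i\ge1}L_i^{gr(A)}$; this forces any subalgebra commuting with $L_{00}^{gr(A)}$ into $L_0^{gr(A)}$, and the description of $L_0^{gr(A)}$ preceding Theorem~\ref{grA-solvable} (plus the explicit treatment of the few graphs with $L_0^{gr(A)}\neq L_{00}^{gr(A)}$, where the extra arrow classes fail to centralise $L_{00}^{gr(A)}$) finishes the job.

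The main obstacle is exactly the eigenvalue-genericity claim in the previous paragraph: one must verify, across all shapes of basis vectors in $S_2,S_3,S_4$ — differences $\hat\beta_0-\hat\alpha_k$, elements $(\beta,C_v(\alpha)^{m(v)})$, and $(\alpha,p)$ with $p\mid C_v^{m(v)}$, including the degenerate situations where a multiplicity equals $1$ or an arrow is a loop — that at least one $r\in S_1$ acts with nonzero eigenvalue; this is where the combinatorics of the Brauer graph (truncated vertices, loops, the function $grd$) enters. Note also that the single graph excluded in Theorem~\ref{inj-map} is irrelevant here, since that restriction was needed only for compatibility of $i$ with brackets, not for the present centraliser computation. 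As a consistency check one may compare with \cite[Theorem~1.1]{AZ} and \cite[Proposition~2.3]{BR}: for a Brauer graph with at least two edges that is not a caterpillar these give $\dim_k(\text{maximal diagonalisable subalgebra})=|E|-|V|+2=\dim_k L_{00}$ by Lemma~\ref{dimL00 of A}, in agreement with maximality.
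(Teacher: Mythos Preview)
Your proposal is correct and follows essentially the same route as the paper: both reduce maximality to showing that every basis vector coming from $S_2\cup S_3\cup S_4$ (resp.\ $S_2^{gr}\cup S_3^{gr}\cup S_4^{gr}$) is an $\mathrm{ad}(L_{00})$-eigenvector of nonzero eigenvalue, by exhibiting a suitable $r\in kS_1$ with $[r,x]\neq0$; your centraliser phrasing is just a clean packaging of that same computation, and the Euler-class shortcut you offer for $gr(A)$ is a pleasant alternative not in the paper. One small correction: the fact you need for $S_3$, namely $m(w)\ge2$ when $\beta$ is a loop, is not a consequence of Lemma~\ref{loop-power} but of the Brauer-graph combinatorics (in the $S_3$ configuration $val(w)=1$, so $w$ would be truncated and $\beta$ would not exist unless $m(w)\ge2$).
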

		
		\begin{proof}
			By the basis of $\mathrm{Ker}\psi_1$ given in Theorem \ref{gen-set of A} (respectively, $\mathrm{Ker}\psi_1^{gr(A)}$ in Lemma \ref{gen-set of grA}),  every element in $\mathrm{HH}^1(A)$ (respectively, $\mathrm{HH}^1(gr(A))$) can be presented by an element in $\langle S_1\cup S_2\cup S_3\cup S_4\rangle$ (respectively, $\langle S_1^{gr}\cup S_2^{gr}\cup S_3^{gr}\cup S_4^{gr}\rangle$). Note that the elements in $S_1$ (respectively, $S_1^{gr}$) can be regarded as a generating set of $L_{00}$ (respectively, $L_{00}^{gr(A)}$). For every element $x\in S_i$ with $i=2,3,4$ and $r\in kS_1$, we have $[x,r]=cx$, $c\in k$. Therefore, it is sufficient to show that for every element $x\in S_i$, $i=2,3,4$, there exists an element $r\in kS_1$, such that $[x,r]\neq 0$. We only prove the statement for the case of BGA. The case of the associated graded algebra $gr(A)$ can be checked in the same way.
			
			Let $x=(\beta_0,\alpha_{k-1}\cdots\alpha_{0}\cdot C_v(\alpha_{0})^{m(v)-1})-(\alpha_{k},\beta_m\cdots\beta_{1}\cdot C_w(\beta_{1})^{m(w)-1})\in S_2$. If $C_v$ and $C_w$ are different special cycles, we can apply the same reasoning as in the proof of Theorem \ref{inj-map}. If $C_v$ and $C_w$ are the same special cycles, by the definition of $S_2$, we have $l(C_v)\geq 4$. Therefore, we can choose a nonzero element $r=(\beta_{0},\beta_{0})-(\beta,\beta)\in kS_1$ with $\beta\neq\alpha_{k}$, 
			$[x,r]=x\neq 0$.
			
			For $x\in S_3$, the statement follows from the proof of Theorem \ref{inj-map}.
			
			Let $x=(\alpha,p)\in S_4$ with $\alpha\mid C_v$, we have the following two cases to discuss.
			
			\textit{Case 1}. If $\alpha\nmid p$, choose 
			$r=\sum_{v\in V,\alpha_v\mid C_v}k_v(\alpha_v,\alpha_v)$ with $k_v=\prod_{v'\in V,v'\neq v}m(v')$ and $\alpha_{v}=\alpha$, then $r\in kS_1$ and $[x,r]=k_vr\neq 0$.
			
			\textit{Case 2}. Assume $\alpha\mid p$. 
			\begin{itemize}
				\item If $p$ contains $\alpha$ more than one time, assume the number of times of $\alpha$ appear in $p$ is $n$ with $n\in\mathbb{N}$ and $n\geq 2$. Choose 
				$r=\sum_{v\in V,\alpha_v\mid C_v}k_v(\alpha_v,\alpha_v)$ with $k_v=\prod_{v'\in V,v'\neq v}m(v')$ and $\alpha_{v}=\alpha$, then $r\in kS_1$ and $[x,r]=(1-n)k_vr\neq 0$.
				
				\item If $p$ contains $\alpha$ just once, there exist another $\alpha'\mid C_v$ and $p$ contains $\alpha'$ $n$ times. Choose 
				$r=\sum_{v\in V,\alpha_v\mid C_v}k_v(\alpha_v,\alpha_v)$ with $k_v=\prod_{v'\in V,v'\neq v}m(v')$ and $\alpha_{v}=\alpha'$, then $r\in kS_1$ and in this case, $[x,r]=-nk_vr\neq 0$.
			\end{itemize}
			To sum up, for every element $x\in S_i$, $i=2,3,4$, there exists an element $r\in kS_1$, such that $[x,r]\neq 0$. Thus $L_{00}$ is the maximal diagonalizable subalgebra of $\mathrm{HH}^1(A)$.
		\end{proof}
		
		Since the dimension of a maximal torus of an algebraic group $G$ is called the rank of $G$ (see for example \cite[~Section~7.2.1]{Spr}), we can rewrite Corollary \ref{dim(A-grA)} as follows.
		
		\begin{corollary}\label{diff2}
			The difference between the dimension of $\mathrm{HH}^1(A)$ and of $\mathrm{HH}^1(gr(A))$ is equal to the difference between the rank of $\mathrm{Out}(A)^{\circ}$ and of $\mathrm{Out}(gr(A))^{\circ}$. In particular, it is also equal to the difference between the dimensions of the corresponding maximal dual fundamental groups.
		\end{corollary}
		
		\begin{proof}
			By Corollary \ref{dim(A-grA)}, Proposition \ref{diag} and \cite[Proposition~3.1]{Strametz}, the difference between the dimension of $\mathrm{HH}^1(A)$ and of $\mathrm{HH}^1(gr(A))$ is equal to the difference between the rank of $\mathrm{Out}(A)^{\circ}$ and of $\mathrm{Out}(gr(A))^{\circ}$.
			
		    By \cite[~Corollary~4.3]{BR}, for any finite dimensional algebra $\Lambda$, the maximal torus rank of $\mathrm{HH}^1(\Lambda)$ is equal to the maximal dimension of the corresponding dual fundamental group of $\Lambda$. Therefore, the difference between the dimension of $\mathrm{HH}^1(A)$ and of $\mathrm{HH}^1(gr(A))$ is equal to the difference between their maximal dimensions of the corresponding dual fundamental groups.
		\end{proof}

		\section*{Acknowledgments}
		We are very grateful to Lleonard Rubio y Degrassi for sending us the valuable comments and inspiring suggestions on the first version of this paper on arXiv when we communicated with him by emails. In particular, Section 5.3 is totally based on his comments. We would also like to thank Andrea Solotar for helpful comments on our paper. Finally, we would like to thank the referee for careful reading and many helpful suggestions to improve our presentations.



\begin{thebibliography}{1}
			
			\bibitem{AZ} M. Antipov, A. Zvonareva. Brauer graph algebras are closed under derived equivalence. Mathematische Zeitschrift \textbf{301}(2): 1963-1981, 2022.
			
			\bibitem{ALS} D. Artenstein, M. Lanzilotta, A. Solotar. Gerstenhaber structure on Hochschild cohomology of toupie algebras. Algebras and Representation theory \textbf{23}(2): 421–456, 2020.
			
			\bibitem{Bardzell} M.J. Bardzell. The alternating syzygy behavior of monomial algebras. Journal of Algebra \textbf{188}(1): 69–89, 1997.
			
			\bibitem{BR} B. Briggs, L. Rubio y Degrassi. Maximal tori in HH$^1$ and the fundamental group. International Mathematics Research Notices \textbf{2023}(7): 5538-5568, 2023.
			
			\bibitem{C} C. Cibils. Rigidity of truncated quiver algebras. Advances in Mathematics \textbf{79}(1): 18–42, 1990.
			
			\bibitem{CSS} C. Chaparro, S. Schroll, A. Solotar. On the Lie algebra structure of the first Hochschild cohomology of gentle algebras and Brauer graph algebras. Journal of Algebra \textbf{558}: 293–326, 2020.
			
			\bibitem{CLZ} J. Chen, Y. Liu, G. Zhou. Algebraic Morse theory via homological perturbation lemma. To appear in Journal of Pure and Applied Algebra, available at: http://math0.bnu.edu.cn/~liuym/
			
			\bibitem{CS} S. Chouhy, A. Solotar. Projective resolutions of associative algebras and ambiguities. Journal of Algebra \textbf{432}: 22-61, 2015.
			
			\bibitem{Ger} M. Gerstenhaber. The cohomology structure of an associative ring. Annals of Mathematics \textbf{78}(2): 267-288, 1963.
			
			\bibitem{Green} E. L. Green. Noncommutative Gr{\"o}bner bases, and projective resolutions. In: {\it Computational Methods for Representations of Groups and Algebras: Euroconference in Essen (Germany)}, pp.29-60, Basel: Birkh\"{a}user Basel, 1999.
			
			\bibitem{GL} J. Guo, Y. Liu. The associated graded algebras of Brauer graph algebras I: finite representation type. Communications in Algebra \textbf{49}(3): 1071-1103, 2021.
			
			\bibitem{Hoc} G. Hochschild. Semi-simple algebras and generalized derivations. American Journal of Mathematics \textbf{64}(1): 677-694, 1942.
			
			\bibitem{JW} M. J{\"o}llenbeck, V. Welker. Minimal resolutoions via algebraic discrete Morse theory. Rhode Island: American Mathematical Society  \textbf{197}, 2009.
			
			\bibitem{K} D. N. Kozlov. Discrete Morse theory for free chain complexes. Comptes Rendus Math{\'e}matique \textbf{340}(12): 867-972, 2005.
			
			\bibitem{LR} M. Linckelmann, L. Rubio y Degrassi. On the Lie algebra structure of HH1(A) of a finite-dimensional algebra A. Proceeding of the American Mathematical Society \textbf{148}(5): 1879-1890, 2020.
			
			\bibitem{LRW} Y. Liu, L. Rubio y Degrassi, C. Wen. On the first Hochschild cohomology of finite dimensional quiver algebras under gluing idempotents. Preprint, 2024, available at: http://math0.bnu.edu.cn/~liuym/
			
			\bibitem{RSS} L. Rubio y Degrassi, S. Schroll, A. Solotar. The first Hochschild cohomology as a Lie algebra. Quaestiones Mathematicae \textbf{46}(9): 1955-1980, 2023.
			
			\bibitem{Sch} S. Schroll, Trivial extensions of gentle algebras and Brauer graph algebras. Journal of Algebra \textbf{444}: 183-200, 2015.
			
			\bibitem{SS} S. Schroll. Brauer graph algebras. In: {\it Homological Methods, Representation Theory, and Cluster Algebras}, pp. 177-223, Springer, 2018.
			
			\bibitem{ES} E. Sk{\"o}ldberg. Morse theory from an algebraic viewpoint. Transactions of the American Mathematical Society \textbf{358}(1): 115–129, 2006.
			
			\bibitem{Spr} T. A. Springer. Linear algebraic groups, second edition. Progress in Mathematics \textbf{9}, Boston: Birkh\"{a}user, 1998.
			
			\bibitem{Strametz} C. Strametz. The Lie algebra structure on the first Hochschild cohomology group of a monomial algebra. Journal of Algebra and its Applications \textbf{5}(3): 245–270, 2006.
			
			\bibitem{Volkov} Y. Volkov. Gerstenhaber bracket on the Hochschild cohomology via an arbitrary resolution. Proceedings of the Edinburgh Mathematical Society \textbf{62}(3): 817–836, 2019.
			
		\end{thebibliography}
	\end{document}